\documentclass[10pt]{amsart}

\usepackage{soul}

\usepackage{standard}
\usepackage{needspace}
\usepackage[margin=3.5cm]{geometry}
\usepackage{color}

\newcommand{\Tbstar}{{}^{b}T^*}

\usepackage{perpage,hyperref,enumerate}

\newcommand{\e}{\epsilon}
\newcommand{\tr}{\operatorname{tr}}
\newcommand{\re}{\mathbb{R}}
\newcommand{\mc}[1]{\mathcal{#1}}
\newcommand{\ta}{\tilde{a}}

\newcommand{\Mtilde}{\widetilde{M}}
\newcommand{\TM}{T^*\!M}

\numberwithin{equation}{section}
\numberwithin{lemma}{section}


\DeclareMathOperator{\Diff}{Diff}

\DeclareMathOperator{\loc}{loc}
\DeclareMathOperator{\comp}{comp}

\newcommand{\hyp}{\mathcal{H}}
\newcommand{\E}{\mathcal{E}}
\newcommand{\gl}{\mathcal{G}}
\newcommand{\tgl}{\widetilde{\mathcal{G}}}

\newcommand{\rprime}{R'}
\newcommand{\rpprime}{R''}


\usepackage{amsmath}
\usepackage{graphicx}
\usepackage{amsfonts}
\usepackage{amssymb}%
\usepackage{latexsym}

\usepackage[show]{ed}

\newcommand{\beq}{\begin{equation}}
\newcommand{\eeq}{\end{equation}}
\newcommand{\beqs}{\begin{equation*}}
\newcommand{\eeqs}{\end{equation*}}
\newcommand{\bit}{\begin{itemize}}
\newcommand{\eit}{\end{itemize}}
\newcommand{\ben}{\begin{enumerate}}
\newcommand{\een}{\end{enumerate}}
\newcommand{\bal}{\begin{align}}
\newcommand{\eal}{\end{align}}
\newcommand{\bals}{\begin{align*}}
\newcommand{\eals}{\end{align*}}
\newcommand{\bse}{\begin{subequations}}
\newcommand{\ese}{\end{subequations}}
\newcommand{\bpr}{\begin{proposition}}
\newcommand{\epr}{\end{proposition}}
\newcommand{\bre}{\begin{remark}}
\newcommand{\ere}{\end{remark}}
\newcommand{\bpf}{\begin{proof}}
\newcommand{\epf}{\end{proof}}
\newcommand{\ble}{\begin{lemma}}
\newcommand{\ele}{\end{lemma}}
\newcommand{\bco}{\begin{corollary}}
\newcommand{\eco}{\end{corollary}}
\newcommand{\bex}{\begin{example}}
\newcommand{\eex}{\end{example}}
\newcommand{\bth}{\begin{theorem}}
\newcommand{\enth}{\end{theorem}}

%

\newcommand{\noi}{\noindent}

\newcommand{\R}{\mathbb{R}}

\newcommand{\cH}{{\mathcal H}}

\newcommand{\cT}{{\mathcal T}}

\newcommand{\bx}{x}

\newcommand{\bxi}{\xi}

\newcommand{\eps}{\varepsilon}

\newcommand{\ri}{i}
\newcommand{\rd}{d}

\newcommand{\Rea}{\mathbb{R}}



\newcommand{\diff}[2]{\frac{\rd #1}{\rd #2}}
\newcommand{\pdiff}[2]{\frac{\partial #1}{\partial #2}}


\newcommand{\OR}{{\Omega_R}}

\newcommand{\GR}{{\Gamma_R}}







\newcommand{\gu}{\nabla u}

\newcommand{\gvb}{\overline{\nabla v}}

\newcommand{\vb}{\overline{v}}




\newcommand{\HoDk}{{H^1_{0,D}(\domain_R)}}

\newcommand{\HoDkkg}{{H^1_{k,A,\coeffn}(\domain_R)}}



\newcommand{\tendi}{\rightarrow \infty}
\newcommand{\tendo}{\rightarrow 0}






\newcommand*{\N}[1]{\left\|#1\right\|}

\newcommand{\vertiii}[1]{{\left\vert\kern-0.25ex\left\vert\kern-0.25ex\left\vert #1 
    \right\vert\kern-0.25ex\right\vert\kern-0.25ex\right\vert}}

\usepackage{hyperref}
\definecolor{myblue}{rgb}{0,0,0.6}
\hypersetup{colorlinks=true,
linkcolor=myblue,citecolor=myblue,filecolor=myblue,urlcolor=myblue}
\allowdisplaybreaks[4]


\newcommand{\ton}{\text{ on }}
\newcommand{\tin}{\text{ in }}
\newcommand{\tfa}{\text{ for all }}

\newcommand{\tand}{\text{ and }}
\newcommand{\tst}{\text{ such that }}




\newcommand{\domain}{\Omega}

\newcommand{\Creg}{{C_{H^2}}}
\newcommand{\Ccont}{{C_{\rm cont}}}
\newcommand{\Cint}{{C_{\rm int}}}
\newcommand{\Ctint}{{\widetilde{C}_{\rm int}}}
\newcommand{\CTR}{{C_{\rm DtN}}}
\newcommand{\CtTR}{{\widetilde{C}_{\rm DtN}}}

\definecolor{escol}{rgb}{0,0,0.8}
\definecolor{estcol}{rgb}{0,0.8,0}


\newcommand{\hFEM}{{h_{\rm FEM}}}

\newcommand{\coeffA}{A}
\newcommand{\coeffn}{\nu}

\newcommand{\matrixstyle}[1]{{\mathsf{#1}}}
\newcommand{\matrixA}{\matrixstyle A}
\newcommand{\matrixI}{\matrixstyle I}
\newcommand{\matrixB}{\matrixstyle B}

\newcommand{\trace}{\gamma}

\newcommand{\CR}{\mathcal{C}}
\newcommand{\absorb}{\alpha}

\newcommand{\ballR}{B(0,R)}

\title[Optimal constants in nontrapping resolvent estimates]{Optimal constants in nontrapping resolvent estimates and applications in numerical analysis}
\author{Jeffrey Galkowski}
\address{Department of Mathematics, University College London, London, UK}
\email{j.galkowski@ucl.ac.uk}
\author{Euan A.~Spence}
\address{Department of Mathematical Sciences, University of Bath, Bath, UK}
\email{E.A.Spence@bath.ac.uk}
\author{Jared Wunsch}
\address{Department of Mathematics, Northwestern University, Evanston, IL, USA}
\email{jwunsch@math.northwestern.edu}
\date{\today}

\begin{document}
\begin{abstract}
We study the resolvent for nontrapping obstacles on manifolds with
Euclidean ends. It is well known that for such manifolds, the outgoing
resolvent satisfies $\|\chi R(k) \chi\|_{L^2\to L^2}\leq
C{k}^{-1}$ for ${k}>1$, but the constant $C$ has been
little studied. We show that, for high frequencies, the
constant is bounded above by $2/\pi$ times the length of the
longest generalized bicharacteristic of $|\xi|_g^2-1$ remaining in the
support of $\chi.$  We show that this estimate is optimal in the case
of manifolds without boundary.
We then explore the implications of this result for the numerical analysis of the Helmholtz equation.
\end{abstract}

\maketitle

\section{Introduction}
Let $(M,g)$ be a manifold with Euclidean ends and $\Omega\Subset M$ an
obstacle with smooth boundary.   Assume that all
Melrose-Sj\"ostrand generalized bicharacteristics (i.e., geodesics) escape to
infinity.
Let $\Delta_{\Omega,g}$
be the Dirichlet realization of the Laplacian on
$M\setminus \Omega$. It is well known that for any
$\chi \in \CR_c^{\infty}(M)$, there exists a constant $C>0$ and
${k}_0>0$ so that
$$
\|\chi (-\Delta_{\Omega,g}-{k}^2-i0)^{-1}\chi \|_{L^2(M\setminus \Omega)\to L^2(M\setminus \Omega)}\leq C|k|^{-1},\qquad k>k_0.
$$
In this paper we study how the constant $C>0$ depends on the classical dynamics on $((M,g),\Omega)$.

Suppose that there are no geodesics tangent to
$\partial\Omega$ to infinite order and let $\varphi_t:S^*\!M\to S^*\!M$
denote the Melrose--Sj\"ostrand generalized bicharacteristic flow~\cite[Section 24.3]{Hormander:v3}. Next, fix $R_1>0$ so that $\Omega\subset B(0,R_1)$ and $(M,g)$ is Euclidean outside
$B(0,R_1)$. Then define for any $R \geq R_1,$
\beq\label{eq:L}
L(g,\Omega,R):=\inf\big\{ t>0\mid \varphi_t(S^*_{B(0,R)}M)\cap S^*_{B(0,R)}M=\emptyset\big\}.
\eeq
(We will omit the $\Omega$ from the notation when $\Omega=\emptyset.$)

In the statement of the following estimates, we will use a family of
Sobolev spaces with appropriate semiclassical scaling, using the
global definition for $s \in \RR,$
\beq\label{eq:Hsnorm}
\smallnorm{u}_{H^s(M\setminus \Omega)}^2 =  \ang{(-\Lap_{\Omega,g}+k^2)^{s} u,u}.
\eeq

\begin{theorem}\label{theorem:main}
Let $(M,g)$ be a manifold with Euclidean ends with $g\in \CR^{1,1}.$ Suppose that
$\Omega\Subset M$ has smooth boundary, $g$ is $\CR^\infty$ near $\Omega$, and $\Omega$ is nowhere tangent to the geodesic
flow to infinite order. Assume the generalized geodesic flow on
$M\backslash \Omega$ is nontrapping.
Then for every $R>R_1$, $\chi\in
\CR_c^{\infty}(B(0,R);[0,1]),$ 
there exists ${k}_0>0$ so that
for ${k}>{k}_0$, 
\begin{equation}\label{upperbound}
\|\chi (-\Delta_{\Omega,g}-{k}^2-i0)^{-1}\chi\|_{L^2(M\setminus \Omega)\to L^2(M\setminus \Omega)}\leq
\frac{2 L(g,\Omega,R)}{\pi{k}}.
\end{equation}
More generally, for $0\leq s\leq 2$,
\begin{equation}\label{Sobolevupperbound}
\|\chi (-\Delta_{\Omega,g}-{k}^2-i0)^{-1}\chi\|_{L^2(M\setminus \Omega)\to H^s(M\setminus \Omega)}\leq
\frac{2^{\frac{s}{2}+1} L(g,\Omega,R)}{\pi} {k}^{s-1}.
\end{equation}
The constant ${k}_0$ may be chosen uniformly as $g$ varies
within a sufficiently small open neighborhood in $\CR^{2,\alpha}$ ($\alpha>0$)
of a given nontrapping metric in $\CR^{2,\alpha}$ (where all metrics are
taken smooth near $\pa\Omega$).

Conversely, when $\Omega=\emptyset$ and $g\in \CR^\infty$, for every $R'>R_1$ with $B(0,R')\subset \{\chi \equiv 1\}$, there is ${k}_0>0$ so that for ${k}>{k}_0$,
\begin{equation}\label{lowerbound}
\|\chi(-\Delta_{g}-{k}^2-i0)^{-1}\chi\|_{L^2(M\setminus \Omega)\to L^2(M\setminus \Omega)}\ \geq \frac{2L(g,R')}{\pi {k}}.
\end{equation}
\end{theorem}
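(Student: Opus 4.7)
The plan is to handle the upper bound \eqref{upperbound} (together with its Sobolev generalization \eqref{Sobolevupperbound}) and the converse \eqref{lowerbound} separately, both organized around the observation that the sharp constant $2/\pi$ equals $L^{-1}$ times the top singular value of the Volterra operator $V\phi(s) = \int_0^s \phi(\tau)\,d\tau$ on $L^2([0,L])$, namely $2L/\pi$.

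For the upper bound, I would work semiclassically, setting $h = 1/k$ and $P := -h^2\Delta_{\Omega,g}$, so that \eqref{upperbound} is equivalent to $\|\chi(P-1-i0)^{-1}\chi\|_{L^2 \to L^2} \leq 2L/(\pi h)$. Using the representation $(P-1-i0)^{-1} = (i/h)\int_0^\infty e^{-i(P-1)s/h}\,ds$ and Egorov's theorem, the principal symbol of $\chi e^{-iPs/h}\chi$ is $\chi(\cdot)\,\chi(\varphi_s(\cdot))$. By the definition of $L(g,\Omega,R)$, this symbol vanishes for $s \geq L$, so the time integral effectively restricts to $s \in [0,L]$. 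A microlocal partition of unity transverse to the flow then reduces $\chi R(k)\chi$ on each bicharacteristic tube to the outgoing resolvent of $-\partial_s^2 - 1$ on a one-dimensional interval of length at most $L$, whose operator norm is $2L/(\pi k)$ by the Volterra computation. Near $\partial\Omega$, the Melrose--Sj\"ostrand theorem for generalized broken bicharacteristics handles reflections, and the hypothesis of no infinite-order tangency rules out pathological glancing rays. Uniformity in $\CR^{2,\alpha}$-perturbations follows because the constants in the commutator and propagation estimates depend only on finitely many derivatives of $g$. For \eqref{Sobolevupperbound}, I would interpolate: for $u = R(k)\chi f$, the equation $-\Delta u = \chi f + k^2 u$ gives $\|(-\Delta+k^2)(\chi u)\|_{L^2} \leq 2k^2\|\chi u\|_{L^2} + \|\chi f\|_{L^2} + \text{l.o.t.}$, yielding $\|\chi R(k)\chi\|_{L^2\to H^2} \leq 4Lk/\pi$; combined with \eqref{upperbound} and the semiclassical norm \eqref{eq:Hsnorm}, complex interpolation produces the coefficient $2^{s/2+1}L/\pi$.

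For the converse \eqref{lowerbound}, I would construct test functions concentrated on a near-extremal bicharacteristic. Fix $\gamma:[0,T]\to S^*M$ with $\gamma(t) \in S^*_{B(0,R')}M$ for every $t\in[0,T]$ and $T$ arbitrarily close to $L(g,R')$; such a $\gamma$ exists by definition of $L(g,R')$. Build a pair of semiclassical Lagrangian/Gaussian-beam states $u_h$ and $v_h$ concentrated at $\gamma(0)$ and $\gamma(T)$, respectively, with the natural envelope along $\gamma$ given by the top singular vectors of the Volterra operator on $[0,T]$ (appropriate sine functions). By Egorov, $\chi R(k)\chi u_h$ is an approximate Lagrangian state along $\gamma$ whose amplitude is the Volterra transform of the envelope of $u_h$; pairing with $v_h$ then reproduces the one-dimensional extremizer calculation and yields $|\langle\chi R(k)\chi u_h, v_h\rangle|/(\|u_h\|\,\|v_h\|) \to 2T/(\pi k)$ as $h\to 0$. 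Letting $T \nearrow L(g,R')$ gives \eqref{lowerbound}.

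The main obstacle is preserving the sharp constant through the microlocal reduction: a crude positive-commutator argument easily yields a bound of the form $CL/(\pi k)$ for some unspecified $C$, but isolating exactly $2/\pi$ requires tracking leading-order oscillation along the flow and identifying the Volterra structure rather than simply bounding operator norms. Boundary reflections, while treatable by the standard Melrose--Sj\"ostrand machinery, likewise require care so that the reduction to the one-dimensional model preserves the sharp leading constant.
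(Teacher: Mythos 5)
Your identification of the Volterra operator on $L^2([0,L])$, with norm $2L/\pi$, as the source of the constant is correct, and your lower-bound sketch is essentially the paper's argument: the paper conjugates $P$ to the normal form $hD_{x_1}$ along a near-maximal ray via a global FIO (Lemma~\ref{lemma:longdarboux}) and transports the explicit Volterra extremizers $\phi(x')\cos(\pi(x_1-\delta)/2(L-2\delta))$ and its sine antiderivative (Proposition~\ref{p:lower}); your ``Lagrangian states with Volterra-extremizer envelopes'' are these functions seen from the other side of the FIO. The $s=2$ estimate plus interpolation for \eqref{Sobolevupperbound} also matches the paper. The problem is the upper bound, where your argument has a genuine gap at exactly the step you yourself call ``the main obstacle''; flagging the obstacle is not the same as overcoming it. After writing $\chi(P-1-i0)^{-1}\chi$ as a time integral of $\chi e^{-is(P-1)/h}\chi$ and introducing a microlocal partition of unity into flow-invariant tubes, the operator does not decompose as a direct sum: recombining the tube pieces (by Cotlar--Stein, or by expanding $\|\sum_j\Psi_j(\chi R\chi)\Psi_j u\|^2$) costs an overlap constant, and \emph{any} multiplicative loss destroys the factor $2/\pi$ you are trying to isolate. (There are also unaddressed issues with the time-integral representation itself: the symbol $\chi\cdot(\chi\circ\varphi_s)$ vanishes for $s\ge L$ only on the characteristic set, so the elliptic region must be split off first, and the convergence of $\int_L^\infty$ requires a nontrapping estimate, which is close to circular.) This is precisely why the paper does not argue at the operator level: it argues by contradiction with semiclassical defect measures $\mu,\alpha,\mu^j$, for which the disintegration along the flow is \emph{exact} and lossless. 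The Volterra operator then enters through the scalar differential inequality $\dot\mu_a^\sharp(t)\le 2\int_0^t\sqrt{\dot\mu_a^\sharp(s)}\sqrt{\dot\alpha_a^\sharp(s)}\,ds$, the Gronwall-type Lemma~\ref{l:intIneq}, and $\|V\|_{L^2([0,L])\to L^2([0,L])}=2L/\pi$ (Proposition~\ref{p:volterra}). To make your route rigorous you would need an operator-level substitute for this exact disintegration, and none is supplied.

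A second, subsidiary gap: ``the Melrose--Sj\"ostrand theorem handles reflections'' is not an argument here. That theorem concerns propagation of wavefront sets; what is needed is propagation of the defect measure through the hyperbolic, gliding, diffractive, and higher-order glancing sets with no loss in the transport identity $\mu(H_pa)=2\Im\mu^j(a)$, and this occupies most of the paper's boundary section (the $b$-cotangent bundle formulation, Lemmas~\ref{l:glancing1}--\ref{l:glancing2}, and the inductive layer-stripping over $\gl^k$). Likewise, the standing hypothesis $g\in\CR^{1,1}$ means Egorov's theorem and the symbol calculus are not directly available; the paper needs the separate low-regularity commutator analysis of Lemmas~\ref{l:lowReg}--\ref{l:commutator}, which your smooth-calculus sketch does not accommodate.
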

Notice that in the interior of $M\setminus \Omega$,
the Melrose-Sj\"ostrand flow is equal to the geodesic flow except that, on
the unit cotangent bundle, the speed of the flow is 2 rather than
1. Thus, in the case where $\Omega=\emptyset$, the theorem states that
the growth of the resolvent as a map on $L^2$ as ${k}\to \infty$ is controlled above and below by $1/\pi$ times the length of the longest geodesic contained entirely in $B(0,R_1)$. 

\subsection{More general operators}\label{sec:moregeneral}

In \S\ref{sec:EDPsetup} below, we recall that several important
  physical applications of the Helmholtz equation involve an operator
  that differs slightly from $-\Delta_{\Omega,g},$ namely the
divergence form operator $-\sum \partial_i g^{ij} \partial_j$ which is
self-adjoint with respect to the Euclidean volume form on $\RR^n.$ In
order to deal with this and similar operators, we will prove a
slightly more general result.
\begin{theorem}\label{thm:2}
  Let $(M,g)$ be a manifold with Euclidean ends, $g\in \CR^{1,1}.$
  Suppose that $\Omega\subset M$ has smooth boundary, $g$ is
  $\CR^{\infty}$ in a neighborhood of $\Omega$, and $\Omega$ is
  nowhere tangent to the geodesic flow to infinite order. Assume the generalized geodesic flow on
$M\backslash \Omega$ is nontrapping. Suppose that
  $P(g)\in \Diff^2(M)$ so that \beq\label{eq:convert1}
  P(g)+\Delta_g=\sum_j L_j\partial_{x_j} +L \eeq where
  $L_j,L \in \CR^{0,\alpha}_c(B(0,R_1)),$ for some $\alpha>0$. Suppose
  further that $\nu$ is a density on $M$ so that $P_\Omega(g)$ is self
  adjoint with respect to $L^2(M\setminus \Omega;\nu)$ where
  $P_{\Omega}(g)$ is the Dirichlet realization of $P(g)$. Let
\beq\label{eq:weightednorm}
\smallnorm{u}_{H_\nu^s(M\setminus \Omega)}^2 :=  \ang{(P_{\Omega}(g)+{k}^2)^{s} u,u}_{L^2(M\setminus \Omega;\nu)}.
\eeq
Then for every $R>R_1$, $\chi\in
\CR_c^{\infty}(B(0,R);[0,1]),$ there exists ${k}_0>0$ so that
for ${k}>{k}_0$ and $0\leq s\leq 2$,
\begin{equation}\label{Sobolevupperbound2}
\|\chi (P_{\Omega}(g)-{k}^2-i0)^{-1}\chi\|_{L_\nu^2(M\setminus \Omega)\to H_\nu^s(M\setminus \Omega)}\leq
\frac{2^{\frac{s}{2}+1} L(g,\Omega,R)}{\pi} {k}^{s-1}.
\end{equation}
The constant ${k}_0$ may be chosen uniformly as $g$ varies
within sufficiently small open neighborhoods of in $\CR^{2,\alpha}$ ($\alpha>0$)
of a given nontrapping metric in $\CR^{2,\alpha}$ and $L_j,$
$L$ vary in small neighborhoods in $\CR^{0,\alpha}$ (where all $g,L_j, L$ are
taken smooth near $\pa\Omega$ and the subset is assumed to be contained in a small open neighborhood in $\CR^N$ for some sufficiently large $N$ near $\pa\Omega$).
\end{theorem}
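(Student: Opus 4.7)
The plan is to reduce Theorem \ref{thm:2} to Theorem \ref{theorem:main} by exploiting the fact that $P(g)$ and $-\Delta_g$ share the same semiclassical principal symbol. Writing $V := P(g) + \Delta_g = \sum_j L_j \partial_{x_j} + L$, this is a first-order differential operator with $\CR^{0,\alpha}$ coefficients compactly supported in $B(0,R_1)$. Setting $h := 1/k$ one has
\[
h^2 P(g) - 1 = \bigl(h^2(-\Delta_g) - 1\bigr) + h \sum_j L_j (h \partial_{x_j}) + h^2 L,
\]
so the semiclassical principal symbol remains $|\xi|_g^2 - 1$, the generalized bicharacteristic flow is unchanged, and the nontrapping length $L(g,\Omega,R)$ is the same for both operators. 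Since $V$ vanishes on the Euclidean ends, the outgoing radiation condition analysis is identical to that for $-\Delta_{\Omega,g}$.

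First I would establish existence of the outgoing resolvent $R_P(k) := (P_\Omega(g) - k^2 - i0)^{-1}$ as a bounded operator between suitable weighted $L^2$ spaces. Since $P_\Omega(g)$ is self-adjoint on $L^2(M \setminus \Omega;\nu)$ and coincides with $-\Delta_{\Omega,g}$ outside $B(0,R_1)$, the limiting absorption principle applies via standard arguments for nontrapping scattering metrics, or alternatively via the resolvent identity
\[
(I + R_\Delta(k) V)\, R_P(k) = R_\Delta(k),
\]
combined with the bound of Theorem \ref{theorem:main} on $R_\Delta(k) := (-\Delta_{\Omega,g} - k^2 - i0)^{-1}$, which constructs $R_P(k)$ on $L^2_{\comp}$.

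Next I would rerun the proof of Theorem \ref{theorem:main} with $-\Delta_{\Omega,g}$ and $L^2(dV_g)$ replaced respectively by $P_\Omega(g)$ and $L^2(\nu)$. That proof is microlocal in nature, resting on positive-commutator and Melrose--Sj\"ostrand propagation-of-singularities estimates whose only inputs are the semiclassical principal symbol of the operator and its formal self-adjointness with respect to the relevant $L^2$ pairing. Both inputs hold for $(P_\Omega(g), L^2(\nu))$, and the sharp constant $\frac{2 L(g,\Omega,R)}{\pi}$ arises from tracking propagation along bicharacteristics of length $L(g,\Omega,R)$, which are unchanged. The $H_\nu^s$ bound for $0 < s \leq 2$ follows by interpolating between $s=0$ and $s=2$, with the $s=2$ estimate obtained from the identity $(P_\Omega(g) + k^2) R_P(k) = I + 2k^2 R_P(k)$ and the $L^2$ bound.

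The main obstacle will be handling the limited regularity $L_j, L \in \CR^{0,\alpha}$, which falls short of the smoothness typically required by semiclassical symbol calculus. This is addressed by observing that the Hölder-regular perturbation enters only through compactly supported multiplication operators in commutator remainders; these contribute errors of semiclassical order $O(h)$ relative to the principal terms, which can be absorbed into the threshold $k_0$ without affecting the sharp leading constant. The claimed uniformity of $k_0$ as $g,L_j,L$ vary in $\CR^{2,\alpha} \times (\CR^{0,\alpha})^{n+1}$ neighborhoods then follows from the continuous dependence of the underlying propagation estimates on the operator coefficients.
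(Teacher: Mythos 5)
Your proposal matches the paper's approach: the paper proves Theorem \ref{thm:2} directly by running the defect-measure/commutator contradiction argument for $P_\Omega(g)$ on $L^2_\nu$ from the outset, using exactly your observations that the semiclassical principal symbol, the bicharacteristic flow, and $L(g,\Omega,R)$ are unchanged, with the $s=2$ case obtained from the equation (plus a bound on the $[-\Delta,\chi]$ commutator term, which your sketch omits but which is lower order) and intermediate $s$ by interpolation. The one imprecision is your claim that the H\"older terms contribute commutator errors of order $O(h)$: with only $\CR^{0,\alpha}$ coefficients this fails, and the paper's Lemma \ref{l:lowReg} instead regularizes the symbols via Littlewood--Paley truncation to show the contribution is $O(\e^{\alpha/2})+O_\e(h)$, vanishing only in the iterated limit $h\to 0$ then $\e\to 0$ — but this does not alter the structure of your argument.
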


\subsection{Motivation from, and applications to,  numerical analysis.}\label{sec:motivation}

In the last few years, there has been growing interest from the
numerical-analysis community in proving bounds on solutions of the
Helmholtz equation where the constants are explicit in the metric
(i.e, the coefficients); see \cite{BrGaPe:15, Ch:16, BaChGo:17,
  OhVe:18, SaTo:17, MoSp:17, GrPeSp:18, GrSa:18}.  
Almost all of these previously-obtained bounds
used variants of the Morawetz commutator $\bx\cdot \nabla$, and thus are restricted to star-shaped domains and certain classes of coefficients (although this class includes discontinuous coefficients; see, e.g., \cite{GrPeSp:18}); the exception are the 1-d bounds in \cite{SaTo:17}, which use the fact that the solution of the Helmholtz equation with piecewise-constant coefficients in 1-d can be expressed in terms of the solution of a linear system of algebraic equations.

This interest from the numerical-analysis community is
because the analysis of \emph{any} numerical method for solving the
Helmholtz equation with variable coefficients requires a resolvent
estimate, and if the constant in the resolvent estimate is not given
explicitly in terms of the coefficients, then the numerical analysis
will not be explicit in the coefficients. For example, having proved a bound explicit in the coefficients, \cite{Ch:16, GrSa:18} were then concerned with analyzing standard
finite-element methods applied to Helmholtz problems with variable
coefficients, and \cite{BrGaPe:15, Ch:16, BaChGo:17, OhVe:18} were concerned with
designing and analyzing methods tailored to the coefficients.

The resolvent estimate \eqref{Sobolevupperbound2} will therefore be a fundamental ingredient in the numerical analysis of variable-coefficient Helmholtz problems in nontrapping scenarios. 
In \S\ref{sec:FEM} we illustrate this fact by proving an error estimate for the finite-element method applied to the variable-coefficient Helmholtz equation posed in the exterior of a nontrapping Dirichlet obstacle, with this estimate explicit in $k$, the coefficients, and the parameters of the discretization; see Theorem \ref{thm:Emain} below. 
The key point is that Theorem \ref{thm:Emain} shows how the condition on the discretization for the error estimate to hold depends on the length of the longest ray, showing that this condition becomes more restrictive as the length of the longest ray grows. 

In \S\ref{sec:FEM} we also briefly outline the implications of
the estimate \eqref{Sobolevupperbound2} for (i) preconditioning finite-element discretizations of the Helmholtz equation (see Remark \ref{rem:precondition}), and (ii) ``uncertainty quantification" of the Helmholtz equation (see Remark \ref{rem:UQ}).

\subsection*{Acknowledgements}
The first author was supported by NSF Postdoctoral Research Fellowship DMS-1502661 and 
thanks Maciej Zworski for helpful conversations.
The second author was supported by EPSRC grant
EP/R005591/1.
The third author was partially
supported by NSF grant DMS--1600023.  The authors are grateful to an
anonymous referee for helpful comments on the manuscript.

\section{Manifolds with Euclidean Ends}
\label{s:eucEnds}
We now define the notion of a manifold with Euclidean ends. Note that the canonical example of a manifold with Euclidean ends is the space $\RR^n$ with a metric $g$ so that $I-g$ has compact support. In order to allow more general topologies, we define the general notion of a manifold with Euclidean ends.
\begin{definition}\label{def:euc}
$(M,g)$ is an $n$-dimensional \emph{manifold with Euclidean ends} if
it is a non-compact, complete Riemannian manifold such that
\begin{itemize}
\item there exists a function $r\in \CR^{\infty}(M;\mathbb R)$ such that
the sets $\{r\leq c\}$ are compact for all $c$, and
\item there exists $R_1>0$ such that $\{r\geq R_1\}$ is the disjoint union
of finitely many components, each of which is isometric to $\mathbb R^n\setminus B(0,R_1)$
with the Euclidean metric, and the pullback of $r$ under the isometry
is the Euclidean norm.
\end{itemize}
\end{definition}
The connected components of $\{r\geq R_1\}$ are called the \emph{infinite ends} of $M$. The notation $B(0,R)$ is defined for $R\geq R_1$ and has the following meaning:
$$
B(0,R):=\{r<R\}.
$$

\subsection{The outgoing resolvent on manifolds with Euclidean ends}

We now review (following the treatment in \cite[Section 4.2]{ZwScat})
some properties of the outgoing resolvent on a manifold
with Euclidean ends. Let $E_i$, $i=1,\dots m$ be the infinite ends of
$M$. and let $R_0({k})$ denote the free resolvent on
$\RR^n$. That is
$R_0({k}):L^2_{\comp}(\RR^n)\to L^2_{\loc}(\RR^n)$ is the
meromorphic continuation of $(-\Delta-{k}^2)^{-1}$ from the half
plane $\Im {k}>0$. Define
$\tilde{R}_0({k}):L^2_{\comp}(M)\to L^2_{\loc}(M)$ by
\begin{equation}
\label{e:free}
\tilde{R}_0({k})f:=\sum_{i=1}^m 1_{E_i}R_0({k}) 1_{E_i}f.
\end{equation}
Next, let $\chi_i\in \CR_c^{\infty}(M;[0,1])$, $i=0,\dots 3$ so that
\begin{equation}
\label{e:cutoffs}
\begin{gathered}
\chi_i\equiv 1\text{ on }\supp \chi_{i-1},\qquad 
 \supp(1-\chi_i)\subset M\setminus \overline{B(0,R_1)},\qquad\supp \chi_i \subset B(0,R)
 \end{gathered}
\end{equation}
for some $R>R_1$. Then for ${k}_0\in \mathbb{C}$ with $\Im {k}_0\gg 1$,  let 
$$
\begin{gathered}
Q({k},{k}_0):=Q_0({k})+Q_1({k}_0)\\
Q_0({k}):=(1-\chi_0)\tilde{R}_0({k})(1-\chi_1),\qquad Q_1({k}):=\chi_2(-\Delta_g-{k}_0^2)^{-1}\chi_1.
\end{gathered}
$$
following the proof in~\cite[Section 4.2]{ZwScat}, we can write 
\begin{equation}
\label{e:resForm}
R({k}):=(P_\Omega(g)-{k}^2)^{-1}=Q({k},{k}_0)(I+K({k},{k}_0)\chi_3)^{-1}(I-K({k},{k}_0)(1-\chi_3)).
\end{equation}
where $K({k},{k}_0):L^2(M)\to L_{\comp}^2(M)$ and in particular, $(1-\chi_3)K({k},{k}_0)=0$. This implies that $(1-\chi_3)R({k})$ lies in the image of $\tilde{R}_0({k})$.

\section{The case of a manifold without boundary}
To illustrate the methods we begin by proving \eqref{upperbound} in
the case of a manifold without boundary. The idea of the proof is
identical when the boundary is non-empty, but the proofs of
propagation statements are more involved. Thus we take $\pa
M=\emptyset$ throughout this section.

\subsection{Defect measures}

We will argue by contradiction. Let $h={k}^{-1}$ and $P(h):=h^2P_{\Omega}(g)-1$. We also write $L^2$ for $L^2_\nu(M\setminus \Omega)$ and $H^s$ for $H^s_\nu(M\setminus \Omega)$.

\begin{remark}
We will sometimes write $\|\cdot\|_{H_h^s}$ for
$h^s\|\cdot\|_{H^s};$ since $\smallnorm{\cdot}_{H^s}$ has a
semiclassical scaling built into it, this means that
$$
\norm{u}^2_{H_h^s} = \smallang{(h^2\Lap_{\Omega,g}+1)^s u,u}.
$$
\end{remark} 

Note that if $\chi_0$, $\chi_1\in \CR_c^\infty(M;[0,1])$ and $\chi_1\equiv 1$ on $\supp \chi_0$, then 
\begin{align*}
\frac{\|\chi_0 (P-i0)^{-1}\chi_0f\|_{L^2}}{\|f\|_{L^2}}&= \frac{\|\chi_0 \chi_1(P-i0)^{-1}\chi_1\chi_0f\|_{L^2}}{\|f\|_{L^2}}\leq \frac{\|\chi_0 \chi_1(P-i0)^{-1}\chi_1\chi_0f\|_{L^2}}{\|\chi_0f\|_{L^2}}\\
&\leq \frac{\|\chi_1(P-i0)^{-1}\chi_1\chi_0f\|_{L^2}}{\|\chi_0f\|_{L^2}}\leq \|\chi_1(P-i0)^{-1}\chi_1\|_{L^2\to L^2}.
\end{align*}
In particular, 
$$
\|\chi_0 (P-i0)^{-1}\chi_0\|_{L^2\to L^2}\leq \|\chi_1(P-i0)^{-1}\chi_1\|_{L^2\to L^2}.
$$
and, since $R>R_1$, \emph{we may assume without loss of generality that $\chi\equiv 1$ on $B(0,R_1)$}.

  If \eqref{upperbound} fails, there
exists a sequence of discrete values of $h=h_j \downarrow 0$ and a sequence
$0\neq f(h)\in L^2$ such that 
$$
\|\chi(P-i0)^{-1}\chi hf(h)\|_{L^2}=1
$$
and
$$
\lim_{h\to 0}\frac{\|\chi (P-i0)^{-1}\chi
  hf(h)\|_{L^2}}{\|f(h)\|_{L^2}}=  M\geq 2 L(g, R).
$$
(note that we allow $M=\infty$.)
There exists $R'<R$ such that we
still have $\supp \chi \subset B(0,R'),$ hence we have the strict inequality $M>L(g, R').$
Let
$$ 
u(h)=(P-i0)^{-1}\chi hf\in L^2_{\loc}.
$$
so that $\|\chi u\|_{L^2}=1$.

\begin{lemma}
\label{l:l2loc}
For all $\tilde{\chi}\in \CR_c^{\infty}(M)$, there exists $C>0$ so that $\|\tilde{\chi} u\|_{L^2}\leq C$.
\end{lemma}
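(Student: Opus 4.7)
My plan is to bootstrap from the qualitative nontrapping resolvent estimate recalled in the introduction, namely $\|\chi' R(k)\chi'\|_{L^2\to L^2}\le Ck^{-1}$ for any $\chi'\in \CR_c^\infty(M)$, combined with a semiclassical rescaling. No propagation argument or defect-measure machinery is needed here: the $L^2_{\loc}$ bound on $u$ falls out of the non-sharp resolvent estimate together with the fact that $\|hf\|_{L^2}\to 0$.

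First I would translate between $(P-i0)^{-1}$ and the outgoing resolvent $R(k):=(P_\Omega(g)-k^2-i0)^{-1}$. Since $P(h)=h^2P_\Omega(g)-1$ and $k=h^{-1}$, we have $(P-i0)^{-1}=h^{-2}R(k)$, and therefore $u=h^{-1}R(k)(\chi f)$.

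Next, given $\tilde\chi\in \CR_c^\infty(M)$, I would choose an auxiliary cutoff $\hat\chi\in \CR_c^\infty(M;[0,1])$ equal to $1$ on $\operatorname{supp}\chi\cup\operatorname{supp}\tilde\chi$. The qualitative nontrapping estimate then gives $\|\hat\chi R(k)\hat\chi\|_{L^2\to L^2}\le C_{\hat\chi}h$ for all sufficiently small $h$. Factoring $\tilde\chi R(k)\chi = \tilde\chi\hat\chi R(k)\hat\chi\chi$ and using $\|\tilde\chi\|_{L^\infty},\|\chi\|_{L^\infty}\le 1$ then yields
$$\|\tilde\chi u\|_{L^2}\;\le\;h^{-1}\|\tilde\chi R(k)\chi\|_{L^2\to L^2}\,\|f\|_{L^2}\;\le\;C_{\hat\chi}\|f\|_{L^2},$$
which is bounded because $\|f\|_{L^2}\to 1/M$ (or $\to 0$ if $M=\infty$) along our subsequence, and in particular stays bounded.

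The only point requiring care is the scaling bookkeeping: one must carefully relate $(P-i0)^{-1}$ to $R(k)$ and verify that the $O(h)$ from the non-sharp resolvent bound cancels the $h^{-1}$ from the rescaling, leaving a bound proportional to $\|f\|_{L^2}$. There is no substantive obstacle, since the qualitative nontrapping bound for arbitrary cutoffs is already on the table; the whole reason for the paper---extracting the \emph{sharp} constant $2L(g,\Omega,R)/\pi$---is what will require the defect-measure and propagation arguments that follow this lemma.
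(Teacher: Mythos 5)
Your argument is mechanically fine (the scaling bookkeeping does work out: $u=h^{-1}R(k)\chi f$, so the qualitative bound $\|\hat\chi R(k)\hat\chi\|_{L^2\to L^2}\le C_{\hat\chi}h$ gives $\|\tilde\chi u\|\le C\|f\|$, and $\|f\|$ is bounded along the contradiction sequence). The problem is the input you take as ``already on the table.'' The qualitative nontrapping estimate is recalled in the introduction only as motivation; it is a theorem whose standard proofs require $g\in\CR^\infty$, whereas Lemma~\ref{l:l2loc} must hold for the $\CR^{1,1}$ (and, in Lemma~\ref{l:lowReg}, merely $\CR^{1,\alpha}$-convergent) metrics and the perturbed operators $P(g)$ of Theorem~\ref{thm:2} --- precisely the settings where no off-the-shelf resolvent bound is available and where the defect-measure machinery is being built to supply one. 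Worse, in \S\ref{sec:uniform} the lemma is invoked for a \emph{sequence} of metrics $g_k$, and your constant $C_{\hat\chi}$ depends on the qualitative estimate for each $g_k$ with no uniformity. So in the generality in which the lemma is actually used, your proof assumes a (weaker, but unproven-there) version of the very estimate the paper is establishing; this is a genuine gap, not just a stylistic choice.

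The paper's proof avoids any resolvent estimate on $(M,g,\Omega)$ itself. It uses only the structure at infinity: outside $B(0,R_1)$ the operator is exactly the Euclidean $-h^2\Delta_{\RR^n}-1$, and by \eqref{e:resForm} the solution is outgoing there, so $(1-\chi_1)u$ can be written as the \emph{free} resolvent $R_0(h^{-1})$ applied to $\chi f$ plus a commutator term supported where $\chi\equiv 1$. The explicit free-resolvent bounds then give $\|(1-\chi_1)u\|_{L^2}\le C(\|f\|_{L^2}+\|\chi u\|_{L^2})$ with $C$ depending only on the Euclidean resolvent (hence independent of $g$ and uniform over the families in \S\ref{sec:uniform}), and combining with the normalization $\|\chi u\|_{L^2}=1$ finishes the proof. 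If you want to salvage your approach, you would have to restrict to smooth metrics, forgo the uniformity statement, and accept that the a priori bound rests on an external propagation theorem rather than being self-contained; the paper's route costs only a short computation with the free resolvent and loses none of the generality.
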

\begin{proof}
Let $\chi_i\in \CR_c^{\infty}(M;[0,1])$, $i=0, 1$ so that $\chi\equiv
1$ on $\supp \chi_1$ and $\chi_1\equiv 1$ on $\supp \chi_0$, and
  $\chi_0\equiv 1$ on $B(0,R_1)$. Then
$$
(1-\chi_0)P(1-\chi_1)=\sum_i(1-\chi_0)(-h^2\Delta_{\RR^n}-1)1_{E_i}(1-\chi_1)
$$ 
where $E_i$ denotes the $i^{\text{th}}$ Euclidean end of $M$. 

Then,
\begin{align*}
P(1-\chi_1)u&=\sum_i(1-\chi_0)1_{E_i}(-h^2\Delta_{\RR^n}-1)1_{E_i}(1-\chi_1)u\\
&=(1-\chi_1)\chi hf-\sum_i(1-\chi_0)[-h^21_{E_i}\Delta_{\RR^n}1_{E_i},\chi_1]u\\
&=(1-\chi_1)\chi hf-\sum_i[-h^21_{E_i}\Delta_{\RR^n}1_{E_i},\chi_1]u.
\end{align*}
Next extend $u 1_{E_i}$ by $0$ to a function $v_i$ on $\RR^n$ so that $v_i\equiv u1_{E_i}$ on $\RR^n\setminus B(0,R_1)$.  Then
$$
(-h^2\Delta_{\RR^n}-1)(1-\chi_1)1_{E_i}v_i=1_{E_i}(1-\chi_1)\chi hf-[h^2\Delta_{\RR^n},1_{E_i}\chi_1]v_i.
$$
Since by~\eqref{e:resForm} $v_i$ is $h^{-1}$ outgoing,
$$
(1-\chi_1)1_{E_i}v_i=h^{-2}R_0(h^{-1})(1_{E_i}(1-\chi_1)\chi hf-[h^2\Delta_{\RR^n},1_{E_i}\chi_1]v_i).
$$
In particular, since $\chi\equiv 1$ on $\supp \chi_1$,
$$
\|(1-\chi_1)1_{E_i}v_i\|_{L^2}\leq C(\|f\|_{L^2}+\|[\Delta_{\RR^n},1_{E_i}\chi_1]v_i\|_{H^{-1}})\leq C(\|f\|_{L^2}+\|\chi u\|_{L^2}).
$$
Now, 
$$
(1-\chi_1)u= (1-\chi_1)\sum_i 1_{E_i}v_i.
$$
Therefore,
$$
\|(1-\chi_1)u\|_{L^2}\leq C(\|f\|_{L^2}+\|\chi u\|_{L^2}).
$$

Let $\tilde{\chi}\in \CR_c^{\infty}(M;[0,1])$. Then using again that $\chi\equiv 1$ on $\supp \chi_1$,
\begin{align*}
\limsup_{h\to 0}\| \tilde{\chi}u\|_{L^2}&\leq \limsup_{h\to 0}(\|\tilde{\chi}(1-\chi)u\|_{L^2}+\|\tilde{\chi}\chi u\|_{L^2})\\
&\leq\limsup_{h\to 0}( \|\tilde{\chi}(1-\chi_1)u\|_{L^2}+\|\chi u\|_{L^2})\\
&\leq C\limsup_{h\to 0}(\|f\|_{L^2}+\|\chi u\|_{L^2})\leq  C\Big(\frac{1}{2L(g,\rprime)}+1\Big)
\end{align*}
completing the proof of the lemma.
\end{proof}

By Lemma~\ref{l:l2loc}, $u$ is uniformly bounded in $L^2_{\loc}$ and, taking subsequences, we may assume that $u$ has defect
measure $\mu$, $\chi f$ has defect measure $\alpha$, and $u$ and $\chi f$ have
joint defect measure $\mu^j;$ in other words, for $h=h_j$ in this
chosen subsequence, and for every $a \in \CR_c^\infty(T^*\!M),$ 
\begin{equation}\label{defectmeasures}
\begin{aligned}
\lim_{h\downarrow 0} \ang{a(x,hD) u, u} &= \int a \, d\mu\\
\lim_{h\downarrow 0} \ang{a(x,hD) \chi f, \chi f} &= \int a \, d\alpha\\
\lim_{h\downarrow 0} \ang{a(x,hD)\chi f, u} &= \int a \, d\mu^j.
\end{aligned}
\end{equation}
Note that we use a quantization procedure that sends symbols with compact support in $x$ to operators with compactly supported kernel.

The Cauchy-Schwarz inequality gives us a simple inequality satisfied by
these three measures.  We use the notation
$$
\mu(a) \equiv \int a\, d\mu
$$
for the pairing of a function and a measure.
\begin{lemma}\label{lemma:CS}
For any $a \in \CR_c^0(T^*\!M;\re)$.
$$
\smallabs{\mu^j(a)}\leq \sqrt{\mu(\smallabs{a})}\sqrt{\alpha(\smallabs{a})}.
$$
\end{lemma}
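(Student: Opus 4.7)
The plan is to reduce the inequality to the case $a \geq 0$ and then prove it for non-negative symbols by factoring $a$ symbolically as $b^2$, reducing the claim to the ordinary $L^2$ Cauchy--Schwarz inequality applied to $\ang{B\chi f, B u}$ where $B = b(x,hD)$.

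First I would split $a = a_+ - a_-$ with $a_\pm = \max(\pm a, 0) \geq 0$ (both continuous and compactly supported). Assuming the inequality is established for non-negative symbols, linearity of $\mu^j$, the triangle inequality, and the elementary bound $\sqrt{xy} + \sqrt{x'y'} \leq \sqrt{(x+x')(y+y')}$ (Cauchy--Schwarz on $\re^2$) combine to give
\[
|\mu^j(a)| \leq |\mu^j(a_+)| + |\mu^j(a_-)| \leq \sqrt{\mu(a_+)\alpha(a_+)} + \sqrt{\mu(a_-)\alpha(a_-)} \leq \sqrt{\mu(|a|)\,\alpha(|a|)}.
\]

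Next, for $a \geq 0$, I would set $b = \sqrt{a}$ and consider $B = b(x,hD)$. The principal symbol of $B^*B$ is $a$, hence
\[
\ang{a(x,hD)\chi f, u} = \ang{B^*B\chi f, u} + O(h) = \ang{B\chi f, B u} + O(h),
\]
and the ordinary Cauchy--Schwarz inequality gives $|\ang{B\chi f, B u}| \leq \|B\chi f\|\cdot\|Bu\|$. By the definitions \eqref{defectmeasures} applied to the symbol $a$ (which equals the principal symbol of $B^*B$), we have
\[
\|B u\|^2 = \ang{B^*B u, u} \xrightarrow[h\to 0]{} \mu(a), \qquad \|B\chi f\|^2 = \ang{B^*B\chi f, \chi f} \xrightarrow[h\to 0]{} \alpha(a),
\]
which immediately yields $|\mu^j(a)| \leq \sqrt{\mu(a)\,\alpha(a)}$; since $a = |a|$ here, this is the desired bound.

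The only technical obstacle is the regularity of $b = \sqrt{a}$: even for $a \in \CR_c^\infty$, $\sqrt{a}$ may fail to be smooth at the zero set of $a$, so $b(x,hD)$ is not a standard semiclassical pseudodifferential operator. I would handle this in the usual fashion: replace $a$ by $a_\varepsilon := a + \varepsilon \tilde\chi$, where $\tilde\chi \in \CR_c^\infty(T^*M)$ equals $1$ on $\operatorname{supp} a$, so that $\sqrt{a_\varepsilon} \in \CR_c^\infty$. The factorization argument then applies to $a_\varepsilon$, and letting $\varepsilon \downarrow 0$ recovers the inequality for $a$ (using that $\mu^j, \mu, \alpha$ are continuous on $\CR_c^0$). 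The extension of the defect measures from smooth to merely continuous symbols, which is needed because the lemma is stated for $a \in \CR_c^0$, follows from uniform approximation by mollification together with the bound $\|\tilde a(x,hD)\|_{L^2\to L^2} \leq C\|\tilde a\|_{L^\infty} + O(h)$. Nothing in the argument uses the nontrapping or Euclidean-ends structure of the underlying problem: the lemma is a general fact about joint defect measures.
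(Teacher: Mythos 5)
Your proposal is correct and follows essentially the same route as the paper's proof: factor $a=b^2$, apply the ordinary $L^2$ Cauchy--Schwarz inequality to $\ang{b(x,hD)\chi f, b(x,hD)u}$, regularize the square root by adding $\varepsilon$ times a cutoff (the paper uses $a_\varepsilon=\psi_0^2(a+\varepsilon\psi_1)$, which makes $\sqrt{a_\varepsilon}=\psi_0\sqrt{a+\varepsilon\psi_1}$ manifestly smooth, whereas your $a+\varepsilon\tilde\chi$ needs $\tilde\chi$ to be chosen as the square of a smooth cutoff), pass to continuous symbols by uniform approximation, and finish with the decomposition $a=a_+-a_-$ and Cauchy--Schwarz on $\re^2$. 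The only difference is the order in which these reductions are performed.
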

\begin{proof}
If $b=\sqrt{a}$ is in $\CR_c^{\infty}(T^*\!M),$
\begin{align*}
\ang{a(x,hD)\chi f,u}&=\ang{b^2(x,hD)\chi f,u}\\
 &= \ang{b(x,hD) \chi f, b(x,hD)u} +O(h)\\
&\leq \sqrt{\ang{b(x,hD) \chi f, b(x,hD) \chi f}} \sqrt{\ang{b(x,hD) u, b(x,hD)
  u}}+O(h)\\
&\leq \big( \int b^2 \, d\alpha\big)^{1/2}\big( \int b^2 \, d\mu\big)^{1/2}+o(1),
\end{align*}
which proves the desired inequality by letting $h \to 0.$

Next, let $a\in \CR_c^{\infty}(T^*\!M)$ with $a\geq 0$, $\psi_i\in \CR_c^{\infty}(T^*M;[0,1])$, $i=0,1$ have $\psi_i \equiv 1$ in a neighborhood of $\supp a$ and $\psi_1\equiv 1$ in a neighborhood of $\supp \psi_0$. 
Then we apply the previous result to $a_\ep=\psi_0^2(a+\ep\psi_1)$ and let $\ep \downarrow 0$ to see that $|\mu^j(a)|\leq \sqrt{\mu(a)}\sqrt{\alpha(a)}.$

Now, letting $0\leq a\in \CR_c^0(T^*\!M)$, $0\leq a_n\in \CR_c^{\infty}(T^*\!M)$ with $a_n\to a$ uniformly. Then we have
$$\mu^j(a_n)\to \mu^j(a),\qquad \mu(a_n)\to \mu(a),\qquad \alpha(a_n)\to \alpha(a)$$
In particular, 
$$
|\mu^j(a)|\leq \sqrt{\mu(a)}\sqrt{\alpha(a)},\qquad 0\leq a\in \CR_c^0(T^*\!M).
$$
Next, for $a\in \CR_c^0(T^*\!M;\re)$, write $a=a_+-a_-$ with $0\leq a_{\pm}\in \CR_c^0(T^*\!M)$. Then,
\begin{align*}
|\mu^j(a)|&\leq |\mu^j(a_+)|+|\mu^j(a_-)|\leq \sqrt{\mu(a_+)}\sqrt{\alpha(a_+)}+\sqrt{\mu(a_-)}\sqrt{\alpha(a_-)}\\
&\leq \sqrt{\mu(|a|)}\sqrt{\alpha(|a|)}.
\end{align*}
\end{proof}

Since $Pu=h\chi f$, for $a\in \CR_c^{\infty}(T^*\!M;\R)$,   
\begin{equation}\label{basiccommutator}
\begin{aligned}
ih^{-1}\langle [P, a(x,hD)]u,u\rangle&=ih^{-1}(\langle a(x,hD)u,Pu\rangle- \langle a(x,hD)Pu,u\rangle\\
&=2\Im \langle a(x,hD)\chi f,u\rangle.
\end{aligned}
\end{equation}
Sending $h\to 0$ yields
\begin{equation}\label{transport1}
\mu(H_pa)=2\Im \mu^j(a).
\end{equation}

For the proof of the following standard result, we use the formula~\eqref{e:resForm} relating $R({k}):=(P_{\Omega}(g)-{k}^2-i0)^{-1}$ to the free outgoing resolvent on $\RR^n$ and refer the reader to, e.g., 
\cite[Proposition 3.5]{Burq:2002}.
\begin{lemma}
\label{l:incoming}
Let 
$$
\mc{I}:=\Big\{\rho \in S^*\!M\,\big| \,\bigcup_{t\geq0}\varphi_{-t}(\rho)\cap \supp \chi =\emptyset\Big\}
$$
be the directly incoming set. Then $\mu(\mc{I})=0$. 
\end{lemma}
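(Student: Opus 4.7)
My plan is to use the resolvent formula \eqref{e:resForm} to confine the support of the defect measure $\mu$ to the union of $\pi^{-1}(\supp \chi)$ and its forward flowout under $\varphi_t$---both of which are disjoint from $\mc{I}$ by construction, so that $\mu(\mc{I}) = 0$ follows immediately. The work lies in showing that $u$ is microlocally outgoing at infinity, which is exactly what the formula \eqref{e:resForm} together with the analysis of the free Euclidean resolvent delivers.

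I would begin by writing $u = \chi_3 u + (1-\chi_3) u$ with $\chi_3$ as in \eqref{e:cutoffs}. Since $\chi_3 \in \CR_c^\infty$, the piece $\chi_3 u$ is microlocally supported in $\pi^{-1}(\supp \chi_3) \subset \pi^{-1}(\supp \chi)$. For the other piece, the formula \eqref{e:resForm} (applied exactly as in the proof of Lemma \ref{l:l2loc}, and using that $(1-\chi_3) R({k})$ lies in the image of $\tilde R_0({k})$) gives $(1-\chi_3) u = \tilde R_0({k}) g$ for some $g \in L^2(M)$ with compact support contained in $\supp \chi_3$. The standard microlocal analysis of the outgoing Euclidean resolvent, i.e., Burq \cite[Proposition 3.5]{Burq:2002}, implies that the defect measure of $\tilde R_0({k}) g$ is supported in the forward flowout of $\pi^{-1}(\supp g)$. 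Combining,
$$
\supp \mu \subset \pi^{-1}(\supp \chi) \cup \bigcup_{t \geq 0} \varphi_t\bigl(\pi^{-1}(\supp \chi)\bigr).
$$

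Now take any $\rho \in \mc{I}$. Setting $t = 0$ in the definition of $\mc{I}$ yields $\pi(\rho) \notin \supp \chi$, so $\rho \notin \pi^{-1}(\supp \chi)$. On the other hand, if $\rho$ lay in the forward flowout of $\pi^{-1}(\supp \chi)$, there would exist $s \geq 0$ with $\pi(\varphi_{-s}(\rho)) \in \supp \chi$, directly contradicting the defining condition of $\mc{I}$. Hence $\rho$ lies outside $\supp \mu$, and since this holds for every $\rho \in \mc{I}$, we conclude $\mu(\mc{I}) = 0$. The essential technical input is the outgoing-propagation statement for $\tilde R_0({k})$, which is precisely the content of Burq's cited result; everything else is bookkeeping with \eqref{e:resForm} and the definition of $\mc{I}$.
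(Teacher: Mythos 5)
Your overall strategy---reduce to the outgoing structure of the resolvent via \eqref{e:resForm} and invoke Burq's Proposition 3.5---is the same one the paper takes (the paper in fact gives no argument beyond this citation). However, the bookkeeping you supply in place of Burq's proof contains a genuine gap: the containment
$\supp \mu \subset \pi^{-1}(\supp \chi) \cup \bigcup_{t \geq 0} \varphi_t\bigl(\pi^{-1}(\supp \chi)\bigr)$
does not follow from your two pieces. The piece $\chi_3 u$ is only localized \emph{spatially} over $\supp \chi_3$, and by \eqref{e:cutoffs} $\supp\chi_3$ is strictly larger than $\supp\chi$; a point $(x_0,\xi_0)$ with $x_0 \in \supp\chi_3 \setminus \supp\chi$ whose backward trajectory escapes without ever meeting $\supp\chi$ lies in $\mc{I}$ but is not excluded by anything you have said. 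Similarly, the data $g$ for the free-resolvent piece is supported in an annulus that need not be contained in $\supp\chi$, and its \emph{free} (straight-line) forward flowout does not coincide with the $\varphi_t$-flowout once trajectories re-enter $B(0,R_1)$, where the metric is non-Euclidean; so even for that piece the containment you assert is not the one the free-resolvent analysis delivers. In short, a purely "support plus flowout" argument cannot confine $\mu$ relative to the smaller set $\supp\chi$.

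The missing ingredient is propagation: since $\mu^j$ is supported over $\supp\chi$ (by Lemma~\ref{lemma:CS} and the fact that $\alpha$ lives over $\supp\chi$), the identity \eqref{transport1} shows that $\mu$ is invariant under $\varphi_t$ away from $\pi^{-1}(\supp\chi)$. For $\rho \in \mc{I}$ the entire backward trajectory avoids $\supp\chi$, so the mass of $\mu$ near $\rho$ equals its mass near $\varphi_{-t}(\rho)$ for all $t \geq 0$; by nontrapping the backward trajectory eventually enters the incoming region over the Euclidean end, where the outgoing structure of $\tilde R_0(k)$ (this is where \eqref{e:resForm} and the free-resolvent analysis genuinely enter) forces $\mu$ to vanish. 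That two-step argument---vanishing at incoming infinity plus flow-invariance off $\supp\chi$---is the content of Burq's proof, and it is what your proposal needs but does not supply.
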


Next, we show that $\mu$ is supported on the characteristic variety and that $u$ is oscillating at frequency roughly $h^{-1}$.
\begin{lemma}
\label{l:h-osc}
The measure $\mu$ is supported on $S^*\!M$. In addition, for $b\in S^1$,
$$
\lim_{h\to 0}\|b(x,hD)\chi u\|_{L^2}^2=\mu(|b|^2\chi^2).
$$
\end{lemma}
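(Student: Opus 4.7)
The principal symbol of $P(h)$ is $p(x,\xi)=|\xi|_g^2-1$, whose characteristic set is exactly the unit cosphere bundle $S^*M$. For the support statement, take any $a\in \CR_c^\infty(T^*M)$ with $\supp a\cap S^*M=\emptyset$: here $p$ is nonvanishing on $\supp a$, so we may solve $q\#p=a$ modulo $hS^{-\infty}_{\comp}$ with $q\in \CR_c^\infty(T^*M)$. Applying $q(x,hD)$ to the equation $P(h)u=h\chi f$, pairing with $u$, and invoking the local $L^2$ bound of Lemma~\ref{l:l2loc} together with $\|\chi f\|_{L^2}=O(1)$ gives $\langle a(x,hD)u,u\rangle = h\langle q(x,hD)\chi f, u\rangle + O(h) = O(h)$. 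Hence $\mu(a)=0$, so $\supp\mu\subset S^*M$.

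Next we claim that $\|\chi u\|_{H^s_h}\leq C_s$ for every $s\geq 0$. For $s=1$ this is a standard elliptic-regularity estimate: pick $\chi_1\in\CR_c^\infty(M)$ with $\chi_1\equiv 1$ on $\supp\chi$, integrate $\|\chi_1 h\nabla u\|^2$ by parts, and use $P(h)u=h\chi f$ to trade the resulting $-h^2\Delta_{\Omega,g}u$ for $u+h\chi f$ modulo $O(h)$ lower-order contributions; Lemma~\ref{l:l2loc} then closes the estimate. Boundary contributions vanish because $\chi_1 u$ inherits the Dirichlet condition from $u$ on $\partial\Omega$ (indeed $\chi\equiv 1$ on $B(0,R_1)\supset\Omega$, so $\chi_1\equiv 1$ there too). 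The higher-order bounds follow by bootstrapping against the equation, which is valid since $g$ is $\CR^\infty$ on $\supp\chi$.

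Finally, fix $\phi\in \CR_c^\infty(\re;[0,1])$ with $\phi\equiv 1$ on $[1/2,2]$ and set $\tilde b:= b\,\phi(|\xi|_g^2)$, so $\tilde b\chi$ is compactly supported in $T^*M$ and $\tilde b=b$ on a neighborhood of $S^*M$. Standard symbol calculus (composition and adjoint formulas modulo $O(h)$) gives $\|\tilde b(x,hD)\chi u\|^2=\mu(|\tilde b|^2\chi^2)+o(1)$, and the first paragraph yields $\mu(|\tilde b|^2\chi^2)=\mu(|b|^2\chi^2)$. To control $\rho:= b-\tilde b\in S^1$, which is supported away from $S^*M$, write $\rho=qp$ with $q\in S^{-1}$; then $\rho(x,hD)\chi u = q(x,hD)P(h)\chi u + O(h)$. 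Since $P(h)\chi u = h\chi^2 f + [P(h),\chi]u$ is $O_{L^2}(h)$ by the $H^1_h$-bound of the previous paragraph (the commutator is $h$ times a first-order semiclassical differential operator, which is $O(1)$ on $\chi_1 u$), we conclude $\|\rho(x,hD)\chi u\|_{L^2}\to 0$, and combining the two estimates yields the limit.

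The principal obstacle is the $S^1$ growth of $b$: the defect-measure definition in~\eqref{defectmeasures} only applies to symbols with compact support in $T^*M$, so we must supplement it with both the support information on $\mu$ and the $h$-oscillation of $\chi u$ to reduce the general case to the compactly supported one.
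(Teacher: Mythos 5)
Your proposal is correct and follows essentially the same strategy as the paper: ellipticity of $P$ off $\{|\xi|_g=1\}$ gives $\supp\mu\subset S^*\!M$, and a frequency cutoff splits $\|b(x,hD)\chi u\|^2$ into a compactly supported piece captured by the defect measure and an elliptic piece that vanishes in the limit. The only (harmless) difference is in the elliptic piece: the paper writes the whole operator $\chi b^*b\chi(1-\psi)$ as $EP+O(h^\infty)$ and applies it directly to $u$ via $Pu=h\chi f$, whereas you factor the symbol as $qp$ and commute $P$ past $\chi$, which forces you to first establish the local $H^1_h$ bound on $u$ — a correct but avoidable extra step.
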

\begin{proof}
Suppose that $a\in \CR_c^{\infty}(T^*\!M)$ with $a\equiv 0$ in a
neighborhood of $|\xi|_g=1$. Then there exists $E\in \Psi^{-2}$
compactly supported so that 
$$a(x,hD)=EP +O_{L^2_{\loc}\to L_{\comp}^2}(h^\infty).$$
Therefore, 
$$
\langle a(x,hD)u,u\rangle =\langle Eh\chi f,u\rangle +O(h^\infty)\to 0.
$$
Hence, $\mu(a)=0$. In particular, $\supp \mu\subset S^*\!M$. 

Fix $\psi\in \CR_c^{\infty}(T^*\!M)$ with $\psi \equiv 1$ on $\supp \chi\cap \{|\xi|^2_g\leq 2\}$. Then, there exists $E\in \Psi^{0}$ compactly supported so that
$$\chi(x)b(x,hD)^*b(x,hD)\chi(x)(1-\psi(x,hD))=EP +O_{L^2_{\loc}\to L_{\comp}^2}(h^\infty).$$
In particular, 
\begin{align*}
\|b(x,hD)\chi u\|_{L^2}^2&=\langle \chi(x)b(x,hD)^*b(x,hD)\chi(x)u,u\rangle \\
&=\langle \chi(x)b(x,hD)^*b(x,hD)\chi(x)\psi(x,hD)u,u\rangle \\
&\qquad+\langle \chi(x)b(x,hD)^*b(x,hD)\chi(x)(1-\psi(x,hD))u,u\rangle\\
&=\langle \chi(x)b(x,hD)^*b(x,hD)\chi(x)\psi(x,hD)u,u\rangle +\langle hE\chi f,u\rangle+O(h^\infty)\\
&\to \mu(|b|^2\chi^2\psi)=\mu(|b|^2\chi^2).
\end{align*}
\end{proof}

\subsection{H\"older continuous metrics}
We now make the necessary adjustments to allow the metric $g$ to be
H\"older continuous.   We refer the reader to \cite[Chapter 3, Section
11]{Ta:00} for an analogous account of propagation of singularities
for the wave equation as well as a review of the history of
propagation of singularities theorems for operators with rough
coefficients.  Stronger results (i.e., with weaker regularity
hypotheses) are probably possible in line with the
work of Burq--Zuily \cite[Remark 3.3]{BuZw:15}, but low regularity is
not our main focus here.

\begin{lemma}
\label{l:lowReg}
Let $g_0, L_0,L_{j,0}$ satisfy the hypotheses of Theorem~\ref{thm:2}.
Suppose that $g(h)\in \CR^{1,\alpha}$ and $L(h), L_j(h) \in \CR^{0,\alpha}$ for some $\alpha>0$ satisfy
$$
\begin{aligned}
  \lim_{h\to 0}\|g(h)-g_0\|_{C^{1,\alpha}}&=0,\\
  \lim_{h\to 0}\|L(h)-L_0\|_{C^{0,\alpha}}&=0,\\
    \lim_{h\to 0}\|L_j(h)-L_{j,0}\|_{C^{0,\alpha}}&=0,\quad
    j=1,\dots,n.
    \end{aligned}
 $$
Then the measure $\mu$ is supported in $S^*\!M$, for $b\in S^1$,
\begin{equation}
\label{e:lowRegL2}
\mu(|b|^2\chi^2)=\lim_{h\to 0}\|Op_h(b)\chi u\|^2_{L^2}
\end{equation}
and for $a\in \CR_c^{\infty}(T^*\!M)$, 
\begin{equation}
\label{e:lowRegFlow}
\mu(H_pa)=2\Im \mu^j(a).
\end{equation}
where $p=|\xi|^2_{g_0}-1$.
\end{lemma}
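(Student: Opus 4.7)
The strategy is to regularize the coefficients, apply the smooth-case arguments of Lemmas~\ref{l:incoming}--\ref{l:h-osc} to the regularized operator, and then pass to the limit using the convergence hypotheses. For each $h>0$, introduce smooth mollifications $g^{(\eps)}(h)$, $L^{(\eps)}(h)$, $L_j^{(\eps)}(h)$ at spatial scale $\eps$ and let $P^{(\eps)}(h)$ be the corresponding operator, with $\eps=\eps(h)\to 0$ chosen slowly enough that
\[
\|g(h)-g^{(\eps)}(h)\|_{C^1}+\|L(h)-L^{(\eps)}(h)\|_{C^0}+\sum_j\|L_j(h)-L_j^{(\eps)}(h)\|_{C^0}=o(1),
\]
while simultaneously $g^{(\eps)}(h)\to g_0$ in $C^1$ and $L^{(\eps)}(h)\to L_0$, $L_j^{(\eps)}(h)\to L_{j,0}$ in $C^0$ on the relevant compact set. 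The crucial a priori input is that Schauder-type elliptic regularity applied to $P(h)u=h\chi f$, combined with Lemma~\ref{l:l2loc}, gives a uniform bound $\|\tilde\chi u\|_{H^2_h}\leq C$ for any $\tilde\chi\in C_c^\infty(M)$. Consequently, for any differential operator $Q(h)$ of order two whose top-order coefficients have small $C^0$ norm, $\|Q(h)u\|_{L^2_{\comp}}=o(1)$.

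For the support statement and \eqref{e:lowRegL2}, one follows Lemma~\ref{l:h-osc} verbatim using $P^{(\eps)}(h)$ in place of $P(h)$. If $a\in C_c^\infty(T^*\!M)$ vanishes near $\{|\xi|^2_{g_0}=1\}$, then for $h$ small enough it vanishes near $\{|\xi|^2_{g^{(\eps(h))}(h)}=1\}$ as well, and one constructs a compactly supported parametrix $E^{(\eps)}\in\Psi^{-2}$ with $\Op_h(a)=E^{(\eps)}P^{(\eps)}(h)+O_{L^2_{\loc}\to L^2_{\comp}}(h^\infty)$. The operator norm of $E^{(\eps)}$ can be kept uniformly bounded since the ellipticity of $P^{(\eps)}(h)$ on $\supp a$ survives at a fixed positive level; the symbol bounds involved use only $C^k$ norms on a scale $\eps$, which grow slowly. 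Writing $P^{(\eps)}u=Pu+(P^{(\eps)}-P)u$, the first contributes $h\langle E^{(\eps)}\chi f,u\rangle=O(h)$ and the second is $o(1)$ by the $H^2_h$ bound on $u$. The argument for \eqref{e:lowRegL2} is identical, starting from the identity used in Lemma~\ref{l:h-osc}.

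The transport identity \eqref{e:lowRegFlow} is the main obstacle and requires a finer commutator analysis. Starting from the exact identity~\eqref{basiccommutator}, which holds for $P(h)$ simply as a differential operator, decompose
\[
ih^{-1}[P(h),\Op_h(a)]=ih^{-1}[P^{(\eps)}(h),\Op_h(a)]+ih^{-1}[P(h)-P^{(\eps)}(h),\Op_h(a)].
\]
The first term is treated by standard semiclassical calculus: its symbol converges in $C^0$ to $H_{p^{(\eps)}}a$, and because $g^{(\eps)}(h)\to g_0$ in $C^1$ on $\supp a$, one has $H_{p^{(\eps)}}a\to H_{p_0}a$ uniformly. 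Its expectation against $(u,u)$ therefore tends to $\mu(H_{p_0}a)$. The difference term is bounded, as an operator from $H^1_h$ to $L^2$, by $\|g(h)-g^{(\eps)}(h)\|_{C^{0,\alpha'}}$ for some $\alpha'<\alpha$, using a Hölder commutator estimate in the spirit of Taylor~\cite[Ch.~3, \S11]{Ta:00}: the top-order cancellation in the commutator costs one derivative off each factor, reducing the required regularity by one and producing a Hölder-norm bound. With the uniform $H^2_h$ control on $u$, pairing against $(u,u)$ yields a term of order $o(1)$. Passing to the limit $h\to 0$ on the right-hand side of~\eqref{basiccommutator} as in~\eqref{transport1} then gives \eqref{e:lowRegFlow}.
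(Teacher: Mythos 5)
Your overall architecture---regularize the coefficients, run the smooth calculus on the regularized operator, and control the remainder by a commutator estimate exploiting the $\CR^{1,\alpha}$ regularity---matches the paper's, which truncates $p_0,p_1$ at frequency $\e^{-1}$ with $\e$ \emph{fixed} and sends $\e\to 0$ only after $h\to 0$ (your diagonal choice $\eps=\eps(h)$ is workable but obliges you to track the $\eps$-dependence of every $O_\eps(h)$ remainder, which you do not do). The genuine gap is in the difference term $ih^{-1}[P-P^{(\eps)},A]$, which is exactly where the difficulty of the lemma is concentrated. The bound you assert---that this term is controlled by $\|g(h)-g^{(\eps)}(h)\|_{C^{0,\alpha'}}$---is false: H\"older control of the \emph{coefficients} of $q=p-p^{(\eps)}$ does not make $h^{-1}[Op_h(q),a(x,hD)]$ small, or even bounded, uniformly in $h$. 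Already for a zeroth-order perturbation $q=c(x)$ with $c(x)=h^{\alpha'}\cos(x_1/h)$ one has $\|c\|_{C^{0,\alpha'}}\sim 1$ while $\|[c,a(x,hD)]\|_{L^2\to L^2}\sim h^{\alpha'}$ (conjugation by $e^{\pm ix_1/h}$ shifts the symbol by $e_1$ in $\xi$), so $h^{-1}[c,a(x,hD)]\sim h^{\alpha'-1}\to\infty$, and the same example defeats the $H^1_h\to L^2$ version of your claim. The ``top-order cancellation'' gains a full factor of $h$ only when one can differentiate $q$ once in $x$; with only H\"older regularity the gain is $h^{\alpha'}$.

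What actually saves the argument is that $p-p^{(\eps)}$ is small in $C^{1}$ (norm $O(\eps^{\alpha})$) and bounded in $C^{1,\alpha}$, hence small in $C^{1,\alpha'}S^2$ for $\alpha'<\alpha$, and the estimate you need is that $q\mapsto h^{-1}[Op_h(q),a(x,hD)]$ is bounded on $L^2$ uniformly in $h$ by the $C^{1,\alpha}S^2$ seminorms of $q$. That is precisely Lemma~\ref{l:commutator}, and it is not a routine citation: its proof performs a \emph{second}, $h$-dependent frequency truncation of $q$ at scale $h^{-\rho}$ with $\rho\in(2/(2+\alpha),1)$ chosen so that $(1+\tfrac{\alpha}{2})\rho>1$; the truncated piece lies in $S^m_\rho$ and has a genuine calculus producing the Poisson bracket up to $O(h^{1-\rho})$, while the high-frequency tail is $o(h)$ in $\CR^{0,\alpha/2}S^2$ and is therefore negligible in each factor of the commutator separately, via the Calder\'on--Vaillancourt-type bound of Lemma~\ref{l:lowRegBound}, with no commutator gain required. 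Until you supply this two-scale estimate (or an equivalent paradifferential bound in the correct norm), the proof of \eqref{e:lowRegFlow} is incomplete. As a side remark, your $H^2_h$ a priori bound on $u$ is obtainable but unnecessary: the paper arranges all remainders to act on $L^2$, so only the $L^2_{\loc}$ bound of Lemma~\ref{l:l2loc} is used.
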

This lemma (together with the results of Section~\ref{sec:volterra} below) suffices
to prove our estimate \emph{provided the bicharacteristic flow is
  unique}.  Note that the lemma only requires the hypothesis that $g
\in \CR^{1,\alpha}$ for $\alpha>0$ and that this regularity
suffices to prove existence of solutions to Hamilton's equations. However,
it is not adequate for proving uniqueness.  Hence
Theorem~\ref{theorem:main} is only stated for $\alpha=1,$ which is
sufficient to guarantee the existence of a well-defined single-valued
bicharacteristic flow.  (A quantitative result using the dynamics of
the multi-valued flow for $\alpha<1$ would be an interesting direction
for further research.)

To prove Lemma~\ref{l:lowReg}, we will need a more general set of symbol classes.  Let $r\in \mathbb{N}$, $0<\alpha<1$, $0\leq \rho<1$. We say that $p(x,\xi)\in \CR^{r,\alpha}S_\rho^m$ if
$$
 \|D_{\xi}^\beta p(\cdot,\xi)\|_{C^{r,\alpha}}\leq C_\beta h^{-r\rho} \langle \xi\rangle^{m-|\beta|}.
$$
We say $p\in S_\rho^m$ if $p\in \bigcap_r \CR^{r,\alpha}S_\rho^m$. 
We need the following boundedness property for operators with symbol in $\CR^{r,\alpha}S^m$. 
\begin{lemma}
\label{l:lowRegBound}
For $r\geq 0$ and $\alpha>0$, the map  $Op_h:\CR^{r,\alpha}S^m\to \mc{L}(H_h^m,L^2)$ with $p\mapsto p(x,hD)$ is continuous with norm bounded independently of $h$. 
\end{lemma}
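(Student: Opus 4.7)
The plan is to reduce to the case $m = 0$ by composition with a semiclassical Bessel potential, and then prove $L^2$ boundedness for symbols in $\CR^{r,\alpha}S^0$ by a rescaling $\eta = \xi/h$, which converts the question into a classical (non-semiclassical) $L^2$-boundedness statement for pseudodifferential operators with H\"older-continuous $x$-dependence.

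First, I would reduce to the case $m = 0$. Set $\Lambda_h^m := (1 - h^2\Delta_g)^{m/2}$, a smooth semiclassical pseudodifferential operator of order $m$ with principal symbol $(1+|\xi|_g^2)^{m/2}$. From \eqref{eq:weightednorm} with $k = h^{-1}$, together with the fact that $P_\Omega(g)$ differs from $-\Delta_g$ by lower-order terms (cf.\ \eqref{eq:convert1}), the operator $\Lambda_h^{-m} : L^2 \to H_h^m$ is an isomorphism with norm uniform in $h$. It therefore suffices to show that $Op_h(p) \circ \Lambda_h^{-m} : L^2 \to L^2$ is bounded uniformly in $h$. By a paradifferential composition argument for limited-regularity symbols (in the spirit of \cite[Chapter 3]{Ta:00}), one writes
$$Op_h(p) \circ \Lambda_h^{-m} = Op_h(q) + R,$$
where $q(x,\xi) := p(x,\xi)\,(1+|\xi|_g^2)^{-m/2}$ lies in $\CR^{r,\alpha}S^0$ with norm controlled by $\|p\|_{\CR^{r,\alpha}S^m}$, and the remainder $R$ has symbol of order $-\alpha$ whose $L^2$-boundedness follows from a simpler variant of the same argument applied at lower order.

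Second, for $q \in \CR^{r,\alpha}S^0$ I would establish $\|Op_h(q)\|_{L^2 \to L^2} \leq C\|q\|_{\CR^{r,\alpha}S^0}$ uniformly in $h$ by rescaling. Working in local coordinate patches (trivial at infinity thanks to the Euclidean ends), the change of variables $\eta = \xi/h$ converts $Op_h(q)$ into the standard (non-semiclassical) quantization $\tilde{q}(x,D)$ of the symbol $\tilde{q}(x,\eta) := q(x,h\eta)$. Differentiating gives
$$\|\partial_\eta^\beta \tilde{q}(\cdot,\eta)\|_{C^{r,\alpha}} = h^{|\beta|}\,\|\partial_\xi^\beta q(\cdot,h\eta)\|_{C^{r,\alpha}} \leq C_\beta\, h^{|\beta|}\langle h\eta\rangle^{-|\beta|} \leq C_\beta \langle\eta\rangle^{-|\beta|},$$
so $\tilde{q}$ lies in the classical symbol class $C^{r,\alpha}S^0$ with norm uniform in $h$. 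The $L^2$-boundedness of $\tilde{q}(x,D)$ then follows from the standard Calder\'on--Vaillancourt-type theorem for pseudodifferential operators with H\"older $x$-dependence, valid precisely because $\alpha > 0$.

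The main obstacle is the paradifferential composition in the first step: symbolic composition with symbols of limited $x$-regularity is more delicate than in the smooth-symbol calculus, and the usual asymptotic expansion must be replaced by a paraproduct decomposition in order to keep the remainder $R$ in a class of the right order. The second step, by contrast, is a routine application of a classical result once the rescaling has been carried out.
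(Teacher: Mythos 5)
Your proof is correct and follows essentially the same strategy as the paper: reduce to $m=0$ and rescale to $h=1$ so as to invoke the classical $L^2$-boundedness theorem for symbols with H\"older regularity in $x$ (\cite[Chapter 13, Theorem 9.1]{Tay3}). The only real difference is the choice of rescaling — the paper conjugates by the unitary dilation $(T_hu)(x)=h^{n/4}u(h^{1/2}x)$, yielding the symbol $a(h^{1/2}x,h^{1/2}\xi)$, while you substitute $\xi=h\eta$ only, yielding $q(x,h\eta)$; both rescaled symbols are uniformly bounded in $\CR^{r,\alpha}S^0$, so either variant works.
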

\begin{proof}
It is enough to show that $Op_h:\CR^{r,\alpha}S^0\to \mc{L}(L^2,L^2)$ is continuous with norm bounded independently of $h$. For this, we rescale to $h=1$. That is, let $T_h:L^2(\re^n)\to L^2(\re^n)$ be given by $(T_hu)(x)=h^{\frac{n}{4}}u(h^{\frac{1}{2}}x)$. Then $T_h$ is unitary and 
$$Op_h(a)u=T_h^*Op_1(a_h)T_hu$$
where
$$
a_h(x,\xi)=a(h^{\frac{1}{2}}x,h^{\frac{1}{2}}\xi).
$$
Now, for $a\in \CR^{r,\alpha}S^m$, 
$$\|D_{\xi}^\beta a_h(\cdot,\xi)\|_{\CR^{r,\alpha}}\leq C_\beta h^{\frac{|\beta|}{2}} \langle h^{1/2}\xi\rangle^{-|\beta|}.$$
In particular, $a_h$ is uniformly bounded in
$\CR^{r,\alpha}S^m$. Therefore, by~\cite[Chapter 13, Theorem 9.1]{Tay3} $Op_1(a_h):L^2\to L^2$ uniformly in $h$ with norm bounded by a finite sum of $\CR^{r,\alpha}S^0$ seminorms. Since $T_h$ is unitary, this completes the proof. 
\end{proof}
\begin{lemma}
\label{l:commutator}
Let $a\in \CR_c^{\infty}(T^*\!M)$, $m\in \re$, and $0<\alpha<1$
$$
\CR^{1,\alpha}S^m\ni p\mapsto h^{-1}[p(x,hD),a(x,hD)]\in \mc{L}(L^2,L^2)
$$
is continuous with norm bounded independently of $h$. 
\end{lemma}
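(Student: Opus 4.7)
The key observation is that $h^{-1}[p(x,hD), a(x,hD)]$ is, at leading order, the quantization of the Poisson bracket $-i\{p,a\}$. Since $p \in \CR^{1,\alpha}S^m$ and $a \in \CR_c^\infty(T^*\!M)$, the bracket $\{p,a\} = \partial_\xi p \cdot \partial_x a - \partial_x p \cdot \partial_\xi a$ involves only \emph{first} derivatives of $p$ and so lies in $\CR^{0,\alpha}S^m$ with seminorms controlled by those of $p$ and $a$. By Lemma~\ref{l:lowRegBound}, $\|\{p,a\}(x,hD)\|_{L^2\to L^2} \leq C\|p\|_{\CR^{1,\alpha}S^m}$ uniformly in $h$, so the task reduces to showing that the difference $h^{-1}[p(x,hD),a(x,hD)] + i\{p,a\}(x,hD)$ is uniformly bounded on $L^2$.

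To this end, I would mollify $p$ in the $x$-variable at scale $\epsilon$: let $\phi_\epsilon(x) = \epsilon^{-n}\phi(x/\epsilon)$ be a standard $\CR_c^\infty$ mollifier, and set $p^\sharp = p *_x \phi_\epsilon$, $r = p - p^\sharp$. From $p \in \CR^{1,\alpha}S^m$ one has uniformly in $\xi$
\bals
\langle\xi\rangle^{-m}\|r(\cdot,\xi)\|_{L^\infty_x} &\lesssim \epsilon^{1+\alpha},\\
\langle\xi\rangle^{-m}\|\partial_x r(\cdot,\xi)\|_{L^\infty_x} &\lesssim \epsilon^\alpha,\\
\langle\xi\rangle^{-m}\|\partial_x^k p^\sharp(\cdot,\xi)\|_{L^\infty_x} &\lesssim \epsilon^{1+\alpha-k},\qquad k\geq 2,
\eals
with constants controlled by $\|p\|_{\CR^{1,\alpha}S^m}$.

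For the smooth piece $p^\sharp$, the standard semiclassical commutator expansion gives $h^{-1}[p^\sharp(x,hD),a(x,hD)] = -i\{p^\sharp,a\}(x,hD) + hR^\sharp$, with the remainder controlled on $L^2$ by a seminorm involving two $x$-derivatives of $p^\sharp$ paired against derivatives of $a$, yielding $\|R^\sharp\|_{L^2\to L^2}\lesssim \epsilon^{\alpha-1}$. For the rough piece $r$, the crucial step is to write the commutator kernel as an oscillatory integral and apply the Taylor expansion $r(x,\xi)-r(y,\xi) = (x-y)\cdot\int_0^1 \partial_x r(y+t(x-y),\xi)\,dt$; the $(x-y)$ factor is then converted to an $h\partial_\xi$ via integration by parts using $(x-y)e^{i(x-y)\xi/h}=-ih\partial_\xi e^{i(x-y)\xi/h}$, with the $\partial_\xi$ landing on (derivatives of) the smooth symbol $a$. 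The outcome is $[r(x,hD),a(x,hD)] = h\,T$, where $T$ is a pseudodifferential operator whose symbol lies in $\CR^{0,\alpha}S^m$ with seminorms $\lesssim \|\partial_x r\|_{L^\infty\langle\xi\rangle^{-m}} \lesssim \epsilon^\alpha$; Lemma~\ref{l:lowRegBound} gives $\|T\|_{L^2\to L^2}\lesssim \epsilon^\alpha$.

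Combining the two bounds and balancing with $\epsilon = h^{1/2}$ yields $h^{-1}[p(x,hD),a(x,hD)] = -i\{p,a\}(x,hD) + O_{L^2\to L^2}(h^{\alpha/2})$, with all constants depending linearly on $\|p\|_{\CR^{1,\alpha}S^m}$, giving the claim. The main technical obstacle is justifying the integration-by-parts step for the rough piece $r$, since the standard pseudodifferential symbol calculus is not directly available at $\CR^{0,\alpha}$ regularity. This must be handled by working explicitly with the oscillatory-integral representation of the commutator kernel, exploiting the smoothness and compact support of $a$ (and the compact $x$-support built into the quantization procedure used in~\eqref{defectmeasures}) to justify all manipulations and then invoking Lemma~\ref{l:lowRegBound} on the resulting low-regularity symbol.
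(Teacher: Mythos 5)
Your overall strategy is sound but genuinely different from the paper's, and the difference is instructive. Both proofs split $p$ into a smooth piece and a rough remainder, but they diverge on how the rough piece is killed. The paper cuts off the $x$-frequencies of $p$ at scale $h^{-\rho}$ with $\rho\in(2/(2+\alpha),1)$ chosen precisely so that, by the Littlewood--Paley characterization of H\"older spaces, $\|p-p_h\|_{\CR^{0,\alpha/2}S^m}=o(h)$. The rough commutator is then handled by the crude triangle inequality $h^{-1}\|[Op_h(p-p_h),a(x,hD)]\|\leq 2h^{-1}\|Op_h(p-p_h)\|\,\|a(x,hD)\|=o(1)$ via Lemma~\ref{l:lowRegBound} --- no commutator cancellation on the rough piece at all; the only place the commutator structure is exploited is on the smooth piece, where the full $S^m_\rho$ calculus is available. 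You instead mollify at the milder scale $\e=h^{1/2}$, so $\|r\|_{L^\infty}\lesssim\e^{1+\alpha}=h^{(1+\alpha)/2}\gg h$, and you therefore \emph{must} extract a factor of $h$ from $[r(x,hD),a(x,hD)]$ by hand. That extraction --- Taylor-expanding a merely $\CR^{1,\alpha}$ symbol inside the oscillatory integral, converting $(x-y)$ into $h\pa_\xi$, and then proving that the resulting operator, whose ``symbol'' is itself an oscillatory integral built from $\pa_x r$ and $\pa_\xi a$ at shifted arguments, is $L^2$-bounded --- is exactly the step that the standard rough-symbol calculus does not hand you for free, and it is exactly the step the paper's choice of $\rho$ is engineered to avoid. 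You correctly flag it as the main obstacle, but it is the entire content of your rough-piece argument and is left undischarged; as written, the proposal is a program rather than a proof at that point.

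Two smaller quantitative points. First, your claimed bound $\|T\|_{L^2\to L^2}\lesssim\e^\alpha$ overreaches: to invoke Lemma~\ref{l:lowRegBound} you need smallness of a \emph{H\"older} seminorm of the symbol of $T$, but $\pa_x r$ is only $O(\e^\alpha)$ in $L^\infty$ while its $\CR^{0,\alpha}$ seminorm is merely $O(1)$; interpolation gives $O(\e^{\alpha/2})$ in $\CR^{0,\alpha/2}$, which is still enough (and for the lemma as stated even $O(1)$ with constants controlled by the seminorms of $p$ would suffice). Second, the remainder $R^\sharp$ for the smooth piece is not controlled by two $x$-derivatives of $p^\sharp$ alone; the Calder\'on--Vaillancourt-type bound needs all higher derivatives, each costing $\e^{-1}$, so you are implicitly working in the exotic class $S^m_{1/2}$ --- this is fine at the critical exponent, but it should be said, and it is the same $S^m_\rho$ calculus the paper uses.
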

\begin{proof}
Let $p\in \CR^{1,\alpha}S^m$ and choose $\rho \in ({2}/{(2+\alpha)}, 1)$
so that $(1+{\alpha}/{2})\rho> 1$. Let $\psi \in \CR_c^{\infty}(\re)$ with
$\psi \equiv 1$  on $[-2,2]$. Define $p_h(x,\xi)=(\psi(h^\rho
\lvert D_x\rvert )p)(x,\xi)$. Then, $p_h\in S^m_\rho$. Moreover,  
\begin{gather*}
|D_{x} D_\xi ^\beta p_h(x,\xi)|\leq C_\beta \langle \xi\rangle^{m-|\beta|},\\
 |D_{x}^\gamma D_{\xi}^\beta p_h(x,\xi)|\leq C_\beta h^{-\rho(|\gamma|-1)}\langle \xi\rangle^{m-|\beta|},\quad |\gamma|>1.
\end{gather*}
Note that the symbol classes $S^m_\rho$ have a symbol calculus and, in particular, 
\begin{equation}
\label{e:smoothTerm}
h^{-1}[Op_h(p_h),a(x,hD)]= \frac{1}{i}Op_h(\{p_h,a\})+O(h^{1-\rho})_{L^2\to L^2}.
\end{equation}
Next, note that by the characterization of of H\"older spaces by
Littlewood--Paley decomposition \cite[Chapter 13, Theorems 8.1, 8.2]{Tay3},
$$
\| (p-p_h)(\cdot,\xi)\|_{\CR^{0,\alpha/2}}\leq h^{\rho(1+\frac{\alpha}{2})}\|p(\cdot,\xi)\|_{\CR^{1,\alpha}}=o(h)\|p(\cdot,\xi)\|_{C^{1,\alpha}}
$$
In particular, $p-p_h\in o(h)\CR^{0,\alpha/2}S^2$ and hence
$$
h^{-1}[Op_h(p-p_h),a(x,hD)]=o(1)_{L^2\to L^2}.
$$
Combining this with~\eqref{e:smoothTerm} completes the proof.
\end{proof}

\begin{proof}[Proof of Lemma~\ref{l:lowReg}]
We start with the proof of~\eqref{e:lowRegL2}. Note that $P=Op_h(p_0)+hOp_h(p_1)$ where $p_0(h)=|\xi|_{g(h)}^2-1\in \CR^{1,\alpha}S^2$ and $p_1(h)\in \CR^{0,\alpha}S^{1}$ uniformly as $h\to 0$.

 Fix $\psi \in \CR_c^{\infty}(\re)$ with $\psi \equiv 1$  on $[-2,2]$ and
 let $\e>0$. Let $p_{i,\e}(x,\xi)=(\psi(\e |D_x|)p_i)(x,\xi).$ Then,
 $p_{i,\e}\in S^{2-i}$ and again by Littlewood--Paley,
 $$
 \|D_\xi^\beta (p_0-p_{0,\e})(\cdot,\xi)\|_{\CR^{0,\alpha}}\leq C_\beta \e\langle \xi\rangle^{2-|\beta|}.
 $$  
 In particular, for $\e>0$ small and $h<h_0$ small, $p_\e$ is elliptic on $||\xi|_g-1|\geq C\e$. Therefore, for $a\in S^0(T^*\!M)$ supported away from $p=0$, there is $\e>0$, $h_0$ small enough so that $p_{0,\e}$ is elliptic on $\supp a$. Hence there is $E_\e\in \Psi^{-\infty}$ (uniformly for $\e>0$ small) so that 
 $$
 a(x,hD)=E_\e Op_h(p_{0,\e})+O(h^\infty)_{\Psi^{-\infty}}.
 $$
 In particular, 
 $$
 a(x,hD)=E_\e (P-Op_h(p_{0}-p_{0,\e})-hOp_h(p_1))+O(h^\infty)_{\Psi^{-\infty}}
 $$
 Therefore, 
 \begin{align*}
 \mu(a)&=\lim_{h\to 0}\langle E_\e Pu,u\rangle -\langle E_\e [Op_h(p_{0}-p_{0,\e})+hOp_h(p_1)]u,u\rangle\\
 &=\lim_{h\to 0} \langle E_\e h\chi f,u\rangle +O(\e)=O(\e)
 \end{align*}
 Since the left-hand side does not depend on $\e$, sending $\e\to 0$, $\mu(a)=0$ and hence $\supp\mu\subset S^*\!M$. 
 
Next, fix $K>0$ large enough so that for $h<h_0$, $\{|\xi|_{g(h)}\leq
2\}\subset \{|\xi|\leq K\}.$ We apply the previous argument with
$a(x,hD)=\chi(x) b(x,hD)^*b(x,hD) \chi(x) (1- \psi(K^{-1}|hD|)).$ In particular, there exists $E_\e\in \Psi^{0}$ compactly supported so that
$$\chi(x) b(x,hD)^*b(x,hD) \chi(x) (1-\psi(|hD|_g))=E_\e P +O_{L^2_{\loc}\to L_{\comp}^2}(\e).$$
Therefore,
\begin{align*}
\lim_{h\to 0}\|b(x,hD)\chi u\|_{L^2}^2&=\lim_{h\to 0}\langle
                                         \chi(x) b(x,hD)^*b(x,hD) \chi(x) u,u\rangle \\
&=\lim_{h\to 0}\langle \chi(x) b(x,hD)^*b(x,hD) \chi(x)\psi(K^{-1}|hD|)u,u\rangle \\
&\qquad +\lim_{h\to 0}\langle \chi(x) b(x,hD)^*b(x,hD) \chi(x) (1-\psi(K^{-1}|hD|))u,u\rangle\\
&=\lim_{h\to 0}\langle \chi(x) b(x,hD)^*b(x,hD) \chi(x)\psi(K^{-1}|hD|)u,u\rangle\\
&\qquad\qquad +\lim_{h\to 0}\langle hE_\e f,u\rangle+O(\e)\\
&= \mu(|b|^2\chi^2\psi(|\xi|_g))+O(\e)=\mu(|b|^2\chi^2)+O(\e).
\end{align*}
Again, since the left hand side is independent of $\e$, this proves~\eqref{e:lowRegL2}.

We now prove~\eqref{e:lowRegFlow}. Let $a\in \CR_c^{\infty}(T^*\!M;\R)$. Then, 
$$
\frac{i}{h}\langle [P,a(x,hD)]u,u\rangle = i[(a(x,hD)u,h\chi f)-(ha(x,hD)\chi f,u)]\to 2\Im \mu^j(a).
$$
Therefore, it remains to show that 
\begin{equation}
\label{e:claimSquid}\frac{i}{h}\langle [P,A]u,u\rangle \to \mu(H_{p}a).
\end{equation}
First, observe that by Lemma~\ref{l:lowRegBound} and the fact that 
$$
 \|D_\xi^\beta (p_1-p_{1,\e})(\cdot,\xi)\|_{\CR^{0,\frac{\alpha}{2}}}\leq C_\beta \e^{\frac{\alpha}{2}}\langle \xi\rangle^{1-|\beta|},
 $$
 we obtain
$$\|ha(x,hD)Op_h(p_1-p_{1,\e})\|+\|hOp_h(p_1-p_{1,\e})a(x,hD)u\|_{L^2}\leq Ch\e^{\frac{\alpha}{2}}.$$
Therefore, 
\begin{equation}
\label{e:l.o.t}
\|[hOp_h(p_1),a(x,hD)u]\|_{L^2}\leq Ch\e^{\frac{\alpha}{2}}\|u\|_{L^2}+O_\e(h^2).
\end{equation}

Next, observe that by Lemma~\ref{l:commutator}, 
\begin{equation}
\label{e:approxEst}
\|[Op_h(p_0-p_{0,\e}),a(x,hD)]u\|_{L^2}\leq C\e h\|u\|_{L^2}. 
\end{equation}
Finally, with $q_\e:=\lim_{h\to 0}p_{0,\e}$,
\begin{equation}
\label{e:smoothTerm2}
\lim_{h\to 0}\langle \frac{i}{h}[Op_h(p_{0,\e}),a(x,hD)]u,u\rangle = \mu(H_{q_\e}a).
\end{equation}
Combining~\eqref{e:l.o.t},~\eqref{e:approxEst}, and~\eqref{e:smoothTerm2} gives
$$
\Big|\lim_{h\to 0}\frac{i}{h}\langle [P,A]u,u\rangle -\mu(H_{q_{\e}}a)\Big|\leq C\e^{\frac{\alpha}{2}}
$$
Since $p_0\in \CR^{1,\alpha}S^2$ uniformly in $h$, $H_{q_\e}\to H_{p}$. In particular, sending $\e\to 0$ and applying the dominated convergence theorem gives~\eqref{e:claimSquid}.
\end{proof}

\subsection{Appearance of the Volterra Operator}\label{sec:volterra}
To complete the proof of \eqref{upperbound} in the boundaryless case,
we prove the following measure-theoretic proposition. This lemma will
be modified slightly in the case of a manifold with boundary to
account for the fact that the generalized bicharacteristic flow is not
generated by a vector field.

\begin{proposition}
\label{p:volterra}
Suppose that $\mu$, $\mu^j$, and $\alpha$ are Radon measures on
$T^*\!M$ with $\alpha$ finite and $\mu$ finite on compact subsets of
$T^*\!M$. Let $X$ be a continuous vector field on $T^*\!M$ and
$\varphi_t:T^*\!M\to T^*\!M$ be given by $\varphi_t(q)=\exp(tX)(q)$.
Let $\Sigma\subset T^*\!M$ be a hypersurface transverse to $X$ so that
the map $F:\re\times \Sigma\ni (t,q)\mapsto \varphi_t(q)\in T^*\!M$ is
homeomorphism onto its image. 
Suppose that $\mu(F((-\infty,0)\times \Sigma))=0$ and for $a\in \CR_c^0(T^*\!M;\re)$,
\begin{equation}
\label{e:sqrt}
|\mu^j(a)|\leq \sqrt{\mu(|a|) \alpha(|a|)}.
\end{equation}
Furthermore, for $\psi\in \CR_c^1(\re)$, $a\in \CR_c^0(\Sigma)$, let $f_{\psi,a}(F(t,q))=\psi(t)a(q)$ and assume that 
\begin{equation}
\label{e:derMeasure}
\mu(f_{\partial_t\psi,a})=2\Im \mu^j(f_{\psi,a}).
\end{equation}
Then,
$$
\mu(F([0,L]\times \Sigma))\leq \frac{4L^2}{\pi^2}\alpha(F([0,L]\times \Sigma))).
$$
\end{proposition}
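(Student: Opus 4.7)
The plan is to push everything forward via $F$ onto $\re\times\Sigma$, slice in the $\Sigma$ variable to produce 1-dimensional measures on $\re$, mollify in $t$, and invoke the sharp norm bound $\|T\|_{L^2([0,L])\to L^2([0,L])}=2L/\pi$ for the Volterra operator $(Tf)(t)=\int_0^t f(s)\,ds$.

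For $0\leq a\in \CR_c^0(\Sigma)$, define $N_a(\psi):=\mu(f_{\psi,a})$, $\alpha_a(\psi):=\alpha(f_{\psi,a})$, $J_a(\psi):=\mu^j(f_{\psi,a})$, which are well-defined Radon measures on $\re$ (with $J_a$ complex). The hypothesis \eqref{e:sqrt} restricts to $|J_a(\psi)|^2\leq N_a(|\psi|)\alpha_a(|\psi|)$, the support hypothesis gives $\supp N_a\subset [0,\infty)$, and~\eqref{e:derMeasure} becomes the distributional identity $N_a'=-2\,\Im J_a$. Since the right-hand side is a complex measure, a jump of size $c>0$ in $N_a$ at some $t_0$ would force a term $c\delta_{t_0}'$ in $N_a'$---an order-$1$ distribution, not a measure. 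Therefore $N_a$ is atomless.

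Next, mollify in $t$: pick an even, nonnegative $\chi_\e\in \CR_c^\infty(\re)$ with support in $[-\e,\e]$ and $\int\chi_\e=1$, and let $n_\e,\beta_\e,j_\e$ denote the smooth densities of $\chi_\e*N_a$, $\chi_\e*\alpha_a$, $\chi_\e*J_a$. The mollified transport equation reads $n_\e'=-2\,\Im j_\e$, and testing the convolved Cauchy--Schwarz inequality against a nonnegative approximate delta at $t_0$ yields the pointwise bound $|j_\e(t_0)|^2\leq n_\e(t_0)\beta_\e(t_0)$. Combining, $|n_\e'|\leq 2\sqrt{n_\e\beta_\e}$; differentiating $\sqrt{n_\e+\delta}$ and sending $\delta\downarrow 0$, and using that $n_\e\equiv 0$ on $(-\infty,-\e)$, yields
$$\sqrt{n_\e(t)}\leq \int_{-\e}^t \sqrt{\beta_\e(s)}\,ds,\qquad t\geq -\e.$$
After a shift of variables, the Volterra norm bound on $L^2([0,L+\e])$ gives
$$\int_0^L n_\e(t)\,dt\leq \frac{4(L+\e)^2}{\pi^2}\int_{-\e}^L \beta_\e(s)\,ds\leq \frac{4(L+\e)^2}{\pi^2}\alpha_a([-2\e,L+\e]).$$

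To finish, I would send $\e\to 0$: atomlessness of $N_a$ makes the left side converge to $N_a([0,L])$, and downward continuity of the finite measure $\alpha_a$ drives the right side to $(4L^2/\pi^2)\alpha_a([0,L])$. Choosing a monotone sequence $a_n\in \CR_c^0(\Sigma)$ with $0\leq a_n\uparrow 1$ and applying monotone convergence to both sides yields $\mu(F([0,L]\times\Sigma))\leq (4L^2/\pi^2)\alpha(F([0,L]\times\Sigma))$. The main obstacle is the boundary bookkeeping around $\{0,L\}$: mollification spreads mass across these endpoints, and without the atomlessness of $N_a$ the weak-limit identity for $(\chi_\e*N_a)([0,L])$ would pick up a spurious $\tfrac12$ on boundary atoms. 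The transport equation saves us precisely because it forces $N_a'$ to be a measure, ruling out atoms in $N_a$ altogether.
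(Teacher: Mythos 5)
Your argument is correct and follows essentially the same route as the paper's: both disintegrate $\mu$, $\mu^j$, $\alpha$ into one-dimensional slices along the flow, combine the transport identity with Cauchy--Schwarz to obtain $\sqrt{\dot\mu_a^\sharp(t)}\le\int_0^t\sqrt{\dot\alpha_a^\sharp(s)}\,ds$, and conclude with the norm $2L/\pi$ of the Volterra operator. The only divergence is technical: where you mollify in $t$, differentiate $\sqrt{n_\e+\delta}$, and remove the mollification using atomlessness of $N_a$, the paper instead proves absolute continuity and continuity of the slice densities directly (Lemmas~\ref{lemma:AC} and~\ref{l:cts}) and then applies the elementary integral inequality of Lemma~\ref{l:intIneq}; your observation that the transport equation forbids atoms in $N_a$ is precisely the mechanism behind Lemma~\ref{lemma:AC}.
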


\begin{proof}
For $0\leq a\in \CR_c^0(T^*\!M)$, define the Radon measures $\mu^\sharp_{a}$, $\Im \mu^{j,\sharp}$, and $\alpha_a^\sharp$ on $\re$ by
$$
\mu^\sharp_{a}(\psi)=\mu(f_{\psi,a}),\qquad\mu^{j,\sharp}_{a}(\psi)=\mu^j(f_{\psi,a}),\qquad \alpha^\sharp_{a}(\psi)=\mu(f_{\psi,a}).
$$
\begin{lemma}\label{lemma:AC}
The measure $\mu^\sharp$ is absolutely continuous with respect to Lebesgue measure and 
$$
\mu_a^\sharp =\dot \mu^\sharp_adt
$$ 
with $|\dot \mu^\sharp_a|\leq C\sup|a|.$ 
\end{lemma}
\begin{proof}
First, observe that for $\psi\in \CR_c^1(\re)$,~\eqref{e:derMeasure} implies
$$
\mu^\sharp_a(\partial_t\psi)=2\Im \mu^j(f_{\psi,a}).
$$
Let $[b,c]\subset \re$, $0\leq \chi_\e\in \CR_c^{\infty}(\re;[0,1])$ with $\chi\equiv 1$ on $[b,c]$ and $\supp \chi_\e\subset (b-\e,c+\e)$. Define
$$
\psi_\e(t)=-\int \chi_\e(s)ds+\int_{-\infty}^t\chi_\e(s)ds.
$$
 Then, since $\mu((-\infty,0)\times \Sigma)=0$, $\mu^j(F((-\infty,0)\times \Sigma))=0$ and hence 
 \begin{align*}
| \mu^\sharp_a(\chi_\e)|&=2|\Im \mu^j(1_{F([0,\infty)\times \Sigma)} f_{\psi_\e,a})|\\
&\leq C\sup|a|\sup|\psi_\e|\\
&\leq C\sup|a|\Big|\int\chi_\e ds\Big|\leq C\sup|a|(c-b+2\e).
 \end{align*}
Since $\chi_\e\to 1_{[b,c]}$, we have by the dominated convergence theorem,
 $$
 |\mu^\sharp_a([a,b])|\leq C\sup|a|(c-b).
 $$
In particular, $\mu^\sharp_a$ is absolutely continuous with respect to Lebesgue measure since the intervals generate the Borel sets on $\re$. Moreover, its Radon--Nikodym derivative is linear in $a$ and  bounded by $C\sup|a|$. 
\end{proof}

By~\eqref{e:sqrt}, since $\mu_a^\sharp$ is absolutely continuous with respect to Lebesgue measure, $\mu_a^{j,\sharp}$ is also. Let
$\dot \mu_a^\sharp$, $\dot\mu_a^{j,\sharp}$, and $\dot\alpha_a^\sharp$ be the Radon--Nikodym derivatives of $\mu_a^\sharp$, $\mu_a^{j,\sharp}$, and $\alpha_a^\sharp$ respectively with respect to Lebesgue measure. In particular,
\begin{align*}
\mu_a^{\sharp}&=\dot\mu_a^\sharp dt,&\Im \mu_a^{j,\sharp}&= \Im \dot\mu_a^{j,\sharp}dt,&
\alpha_a^\sharp&=\dot \alpha_a^\sharp dt+\lambda
\end{align*}
where $\lambda\perp dt$. Now, by~\eqref{e:sqrt}, 
$$
\frac{1}{2r}\Big|\mu_a^{j,\sharp}([t-r,t+r])\Big|\leq \frac{1}{2r}\sqrt{\mu_a^\sharp([t-r,t+r])}\sqrt{\alpha_a^\sharp([t-r,t+r])}
$$
So, by the Lebesgue differentiation theorem~\cite[Theorem 3.22]{Fol}, for Lebesgue a.e. $t$,
\begin{equation}
\label{e:LebSqrt}
|\dot \mu_a^{j,\sharp}(t)|\leq \sqrt{\dot\mu_a^\sharp(t)}\sqrt{\dot\alpha_a^\sharp(t)}.
\end{equation}

\begin{lemma}
\label{l:cts}
With $\dot \mu_a^\sharp(t)$ as above, $\dot\mu_a^\sharp(t)$ is continuous and 
\begin{equation}
\label{e:intIneq}
\dot \mu_a^\sharp(t)\leq 2\int_0^{\max(t,0)}|\Im \dot \mu_a^{j,\sharp}(s)|ds\leq 2\int_0^t\sqrt{\dot\mu_a^\sharp(s)}\sqrt{\dot\alpha_a^\sharp(s)}ds.
\end{equation}
\end{lemma}
\begin{proof}
By~\eqref{e:derMeasure}, for $\psi\in \CR_c^1(\re)$,
\begin{equation}
\label{e:absContDer}
\int \psi'(t) \dot \mu_a^{\sharp}(t)dt=2\int \psi(t)\Im \dot \mu_a^{j,\sharp}(t)dt.
\end{equation}
Let $\chi\in \CR_c^{\infty}(\re;[0,1])$ with $\int \chi(s)ds\equiv 1$ and define $\chi_{\e,t'}(s)=\e^{-1}\chi(\e^{-1}(s-t'))$ and 
$$
\psi_{\e,t'}(s)=-1+\int_{-\infty}^t\chi_{\e,t'}(s)ds.
$$
Then by~\eqref{e:absContDer}, together with the fact that $\mu_a^{j,\sharp}(t)=0$ for $t<0$,
$$
\int \chi_{\e,t'}(t)\dot\mu_a^\sharp(t)dt=2\int_0^\infty \psi_{\e,t'}(t)\Im \dot \mu_a^{j,\sharp}(t)dt.
$$
Applying the Lebesgue differentiation theorem on the left and dominated convergence on the right, we find for Lebesgue a.e. $t'$ 
$$
\dot\mu_a^\sharp(t')=-2\int_0^{\max(t',0)}\Im \dot\mu_a^{j,\sharp}(t)dt.
$$
Note that this implies $\dot{\mu}_a^\sharp(t')$ is continuous. Applying~\eqref{e:LebSqrt} gives~\eqref{e:intIneq}.
\end{proof}

\begin{lemma}
\label{l:intIneq}
Let $0\leq b(t),c(t)\in L^2$ with $b(t)$ continuous and
$$b^2(t)\leq 2\int_0^tb(s)c(s)ds.$$
Then
\begin{equation}\label{foo}
  b(t)\leq \int_0^tc(s)ds.\end{equation}
\end{lemma}
\begin{proof}
  Supposing \eqref{foo} to be false, there exists $\e>0$ and
    $t> 0$ such that
    $$
      b(t)\geq \int_0^tc(s)ds+\e;
      $$
  hence we may define
$$
t_0:=\inf\Big\{t>0\mid \int_0^tc(s)ds+\e\leq  b(t)\Big\}.
$$

Then, 
\begin{align*}\Big[\int_0^{t_0}c(s)ds+\e\Big]^2&=b^2(t_0)\leq 2\int_0^{t_0}b(s)c(s)ds\\
&\leq 2\int_0^{t_0}\int_0^s c(r) c(s)drds+2\e\int_0^{t_0}c(s)ds\\
&=\Big[\int_0^{t_0}\int_0^sc(r)c(s)drds+\int_0^{t_0}\int_{s}^{t_0}c(r)c(s)drds\Big]+2\e\int_0^{t_0}c(s)ds\\
&=\Big[\int_0^{t_0}c(s)ds\Big]^2+2\e\int_0^{t_0}c(s)ds\\
&=\Big[\int_0^{t_0}c(s)ds+\e\Big]^2-\e^2
\end{align*}
which is a contradiction. 
\end{proof}

Using the continuity of $\dot\mu_a^\sharp$ together with~\eqref{e:intIneq} to apply Lemma~\ref{l:intIneq} with $b(t)=\sqrt{\dot \mu_a^\sharp(t)}$, $c(t)=\sqrt{\dot\alpha_a^\sharp(s)}$, we have
$$\sqrt{\dot \mu_a^\sharp(t)}\leq \int_0^t \sqrt{\dot \alpha_a^\sharp(s)}ds.$$
Now,
\begin{equation*}
\begin{aligned}
\mu(F([0,L]\times \Sigma)&=\int_0^L \dot\mu_1^\sharp(t)dt\leq \int_0^L\Big|\int_0^t\sqrt{\dot \alpha_1^\sharp(s)}ds\Big|^2dt\\
&=\left\|V\sqrt{\dot\alpha_1^\sharp}\right\|^2_{L^2([0,L])}\leq \|V\|_{L^2([0,L])\to L^2([0,L])}^2\int_0^L\dot\alpha_1^\sharp(t)dt
\end{aligned}
\end{equation*}
where $V:L^2([0,L])\to L^2([0,L])$ is the Volterra operator. 
Since the Volterra operator acting on $L^2([0,L])$ has norm $(2L)/\pi$
(\cite{Ha:67}, Problem 188) and
$$
\alpha(F([0,L]\times \Sigma))=\int_0^L\dot\alpha_1^\sharp(t)dt,
$$ 
we have
$$
\mu(F([0,L]\times \Sigma))\leq \frac{4L^2}{\pi^2}\alpha(F([0,L]\times \Sigma)).
$$
\end{proof}

\subsection{Completion of the proof of~\eqref{upperbound} in the boundaryless case}
Let $$\Sigma=\big\{\rho \in T^*_{\partial B(0,R)} M\colon (H_p
r)(\rho)<0 \big\},$$
where $r$ is the radial variable defined in the Euclidean end(s) as in Definition~\ref{def:euc}.
Thus, these are the inward-pointing covectors over the boundary of the
ball of radius $R.$  By convexity of Euclidean balls,
$$
 \bigcup_{t >0} \exp(t H_p) (\Sigma)\supset \mc{I}^c,\quad
  \bigcup_{t \leq 0} \exp(t H_p) (\Sigma)\subset \mc{I},
$$
and $F$ as in Proposition~\ref{p:volterra} is a diffeomorphism onto
its image.

Then, by~\eqref{transport1} and
Lemmas~\ref{lemma:CS},~\ref{l:incoming}, and~\ref{l:lowReg}, $(\mu,
\mu^j, \alpha,\Sigma)$ satisfy the hypotheses of
Proposition~\ref{p:volterra} with $\varphi_t=\exp(tH_p)$. Therefore,

$$
\mu\Big(\bigcup_{t=0}^L \varphi_t(\Sigma\cap S^*\!M)\Big)\leq \frac{4L^2}{\pi^2}\alpha(T^*\!M),
$$
where we may take $L=L(g,\rprime).$
So, since $0\leq \chi^2\leq 1$ and
$$
\supp \chi\cap S^*\!M\subset\bigcup_{t=0}^{L(g,\rprime)} \varphi_t(\Sigma\cap S^*\!M),
$$
we conclude
$$
\limsup_{h\to 0}\|\chi u\|_{L^2}=\sqrt{\mu(\chi^2)}\leq \frac{2L(g,\rprime)}{\pi}\sqrt{\alpha(T^*\!M)}\leq \liminf\frac{2L(g,\rprime)}{\pi}\|f(h)\|_{L^2}.
$$
This completes the proof of~\eqref{upperbound} in the case of a manifold without boundary.

\subsection{Uniformity of ${k}_0$ and Sobolev estimates}\label{sec:uniform}

{In this section we prove the uniformity of ${k}_0$ statement from
Theorem~\ref{theorem:main}, as well as the more general Sobolev
mapping property \eqref{Sobolevupperbound}.  We will omit the terms $L, L_j$ from this
discussion for brevity, but will on the other hand prove a slightly
more general
statement about metric perturbations as follows.}

We begin with uniformity in the case $s=0$, i.e., in the basic $L^2$
estimate.  Fix $R_1>0$ and let $\mc{C}$ be a subset of $\CR^{1,1}$
metrics so that for $g\in \mc{C}$, $g$ is nontrapping and
$\supp (I-g)\subset B(0,R_1)$. Furthermore, suppose that for any
$\{g_k\}_{k=1}^\infty\subset \mc{C}$, there exists a subsequence
$\{g_{k_m}\}_{m=1}^\infty$, $\gamma>0$ and $g\in \mc{C}$ so that
$\|g_{k_m} -g\|_{\CR^{1,\gamma}}\to 0$ and
$\lim_{m\to \infty} L(g_{k_m},R_1)\geq L(g,R_1).$ Let
$\chi\in \CR_c^{\infty}(B(0,R);[0,1])$ and $R_1<\rprime<\rpprime<R$ so
that $\supp \chi \subset B(0,\rprime)$.

 We start by showing that there exists ${k}_0>0$ so that for all $g\in \mc{C}$, and ${k} >{k}_0$, 
\begin{equation}
\label{e:unEst1}
\|\chi (P(g)-{k}^2-i0)^{-1}\chi\|_{L^2\to L^2}\leq \frac{2
  L(g,\rpprime)}{\pi k}
\end{equation}

Suppose not. Then since $L(g,\rprime)<L(g,\rpprime)$, there exists $\{g_k\}_{k=1}^\infty\subset \mc{C}$,
$h_k\to 0$, $f_k\in L^2(M)$, $u_k\in H^s_{\loc}(M)$, with
$\|\chi u_k\|_{L^2}=1$, and $\delta>0$
so that 
\begin{gather}
(h_k^2P(g_k)-1)u_k=h_k\chi f_k,\nonumber\\
1=\|\chi u_k\|_{L^2}\geq \frac{2 L(g_k,\rprime)\|f_k\|_{L^2(M)}}{\pi}+\delta.\label{e:contradictAssume}
\end{gather}
Extracting subsequences, we may assume that $g_k\to g$ in $\CR^{1,\gamma}$ for some $\gamma>0$ and $\lim L(g_k,\rprime)\geq L(g,\rprime).$ Note also that by Lemma~\ref{l:l2loc}, $u_k$ is uniformly bounded in $L^2_{\loc}$ and hence, by extracting further subsequences, we may assume that $u_k$ has defect measure $\mu$, $\chi f_k$ has defect measure $\alpha$, and $u_k,\chi f_k$ has joint defect measure $\mu^j$. Let $\Sigma$ denote the set of directly incoming points on $T^*_{\partial B(0,\rprime)}M$. By Lemmas~\ref{lemma:CS},~\ref{l:incoming}, and~\ref{l:lowReg}, $(\mu, \mu^j, \alpha,\Sigma)$ satisfy the hypotheses of Proposition~\ref{p:volterra} with $\varphi_t=\exp(tH_p)$ and $p=|\xi|^2_g-1$. Therefore, 
$$
\mu\Big(\bigcup_{t=0}^L \varphi_t(\Sigma\cap S^*\!M)\Big)\leq \frac{4L^2}{\pi^2}\alpha(T^*\!M),
$$
and we arrive at a contradiction just as above.

Finally, to obtain the bound \eqref{Sobolevupperbound}, we begin by
considering the case $s=2.$  We compute
\begin{multline*}
(P(g)+{k}^2)\chi (P(g) -{k}^2-i0)^{-1}\chi  f=\chi^2 f +([-\Delta_{\RR^n},\chi]+2{k}^2\chi)(P(g)-{k}^2-i0)^{-1}\chi f.
\end{multline*}
We next consider $[-\Delta_{\RR^n},\chi]$ and show that there exists $k_1>0$ so that for $k>k_1$ and $g\in  \mathcal{C}$, 
\begin{equation}
\label{e:boundReg}
\| [-\Delta_{\RR^n},\chi](P-k^2-i0)^{-1}\chi \|_{L^2\to L^2}\leq \sup_{S^*M}|H_{_{|\xi|^2}}\chi|\frac{2L(g_k,R'')}{\pi}.
\end{equation}
Suppose that~\eqref{e:boundReg} does not hold. Then there is $\delta>0$ and a sequence $h_k\to 0$, $f_k\in L^2(M)$, $u_k\in H^2(M)$ with $\|f_k\|_{L^2}=1$, $u_k$ so that 
\begin{equation*}
\begin{gathered}
(h_k^2P(g_k)-1)u_k=\chi h_kf_k,\quad u_k|_{\partial M}=0,\\
\| h_k^{-1}[-h_k^2\Delta_{\RR^n},\chi]u_k\|_{L^2}\geq \sup_{S^*M}|H_{_{|\xi|^2}}\chi|\frac{2(L(g_k,R')+\delta)}{\pi}
\end{gathered}
\end{equation*}
 Let $\tilde{\chi}\in \CR_c^\infty(B(0,R');[0,1])$ with $\tilde{\chi}\equiv 1$ on $\supp \chi$. 
As before, we may assume that $u_k$ has a defect measure and then, by Lemma~\ref{l:suppBoundary} applied with $\chi$ replaced by $\tilde{\chi}\in C_c^\infty(B(0,R'))$,
\begin{align*}
\sup_{S^*M}|H_{_{|\xi|^2}}\chi^2|\frac{4(L(g_k,R')+\delta)^2}{\pi^2}&\leq\| h^{-1}[-h^2\Delta_{\RR^n},\chi]\tilde{\chi}u_k\|_{L^2}^2\to \mu(|H_{_{|\xi|^2}}\chi|^2\tilde{\chi}^2)\\
&\leq \sup_{S^*M}|H_{_{|\xi|^2}}\chi|^2 \mu( \tilde{\chi}^2)\leq  \sup_{S^*M}|H_{_{|\xi|^2}}\chi|^2\frac{ 4(L(g,R'))^2}{\pi^2}.
\end{align*}
a contradiction.

Thus, by \eqref{e:unEst1}, for ${k}>{k}_0$, and any $g\in \mc{C}$,
\begin{align*}
&\|(P(g)+{k}^2)\chi(P(g)-{k}^2-i0)^{-1}\chi f\|_{L^2}\\
&\qquad\qquad\leq 2{k}^2\|\chi(P(g)-{k}^2-i0)^{-1}\chi f\|_{L^2}+\|\chi^2f\|+\|[-\Delta_{\RR^n},\chi](P(g)-{k}^2-i0)^{-1}\chi f\|\\
&\qquad\qquad\leq \frac{4L(g,\rpprime){k}}{\pi}\|\chi f\|_{L^2}+\|[-\Delta_{\RR^n},\chi](P(g)-{k}^2-i0)^{-1}\chi f\|_{L^2}+\|\chi^2f\|_{L^2}.
\end{align*}
Using~\eqref{e:boundReg}, we have for ${k}>{k}_1$, and any $g\in \mc{C}$,
$$
\|[-\Delta_{\RR^n},\chi](P(g)-{k}^2-i0)^{-1}\chi f\|_{L^2}\leq \sup_{S^*M}|H_{_{|\xi|^2}}\chi|\frac{2L(g_k,R'')}{\pi}\|f\|_{L^2}.
$$
In particular, letting 
$$
{k}>\max\Big({k}_0,{k}_1, \frac{\sup_{S^*M}|H_{_{|\xi|^2}}\chi|2L(g,\rpprime)+\pi}{ 4(L(g,R)-L(g,\rpprime))}\Big),
$$
we have for all $g\in \mc{C}$,
\begin{equation}
\label{e:unEst2}
\begin{aligned}
\|\chi(P(g)-{k}^2-i0)^{-1}\chi f\|_{H^2}&=\|(P(g)+{k}^2)\chi(P(g)-{k}^2-i0)^{-1}\chi f\|_{L^2}\\
&\leq \frac{4L(g,R){k}}{ \pi}\| f\|_{L^2}.
\end{aligned}
\end{equation}
Since $L(g,\rpprime)\leq L(g,R)$, the general case of $0\leq s\leq 2$ then follows by interpolation between~\eqref{e:unEst1} and~\eqref{e:unEst2}

If we choose $\mathcal{C}$ to be a subset of an open neighborhood in the
  $\CR^{2,\alpha}$ topology of a given
  $\CR^{2,\alpha}$ nontrapping metric, then provided this
  neighborhood is chosen sufficiently small, all the metrics in
  $\mathcal{C}$ are nontrapping, and the subsequence condition is
  guaranteed by the compactness of the embedding $\CR^{2,\alpha} \hookrightarrow
  \CR^{1,\gamma};$ indeed we have subsequential convergence in
  $\CR^{1,1},$ which ensures that $\lim L(g_{k_m}, R_1)
  =L(g,R_1).$  This choice of $\mathcal{C}$ then gives the uniformity
  assertion from Theorem~\ref{theorem:main}.
This completes the proof of~\eqref{upperbound} in the case of a manifold without boundary.

\section{Manifolds with boundary}

We now turn to the case of manifolds with boundary.  Our treatment of
the propagation of defect measures in this setting is motivated by
previous work of Miller \cite{Miller} and Burq--Lebeau
\cite{BurqLebeau:2001}; see also \cite{Bu:97}.

Let $\Mtilde$ be an open manifold extending $M$ and extend $P$ to an
operator on $\Mtilde$. We then let $\underline{u}$ be the extension of $u$
by $0.$
As in the boundaryless case, we will assume that we have a sequence
$h=h_j \downarrow 0$ with
$$Pu(h)=h\chi f(h)\text{ in }M,\qquad u|_{x_1=0}=0,$$
and we take
$$
\smallnorm{f(h)}=1, 
$$
$$
\lim_{h\to 0}\|\chi u\|_{L^2}=M>L(\Omega, g, R).
$$

Since propagation of singularities is a local consideration, we may employ
Riemannian normal coordinates $(x_1, x')$ in which $\pa M$ is given by
$x_1=0,$ $M=\{x_1>0\}$, and the operator $P$ is given by
\begin{equation}\label{operatornormalcoords}
P=(hD_{x_1})^2 + R(x_1,x', hD_{x'}) + h E
\end{equation}
for some self-adjoint $E \in \Diff^1_h$ and for $R \in \Diff^2_h$ with
symbol $r(x_1,x', \xi').$

We proceed as before, letting $\mu$ be a defect measure of $\underline{u},$ $\alpha$ be a defect measure of $\underline{\chi f},$
$\mu^j$ a joint defect measure of $\underline{u}$ and $\underline{\chi f}$.
Finally, let  $\dot{\nu}$ be the defect measures of
$hD_{x_1}u|_{x_1=0}$ on $\pa M.$  Thus,
\begin{equation}\label{defectmeasures2}
\begin{aligned}
\lim_{h\downarrow 0} \ang{a(x,hD) \underline{u}, \underline{u}} &= \int a \, d\mu\\
\lim_{h\downarrow 0} \ang{a(x,hD) \underline{\chi f}, \underline{\chi f}} &= \int a \, d\alpha\\
\lim_{h\downarrow 0} \ang{a(x,hD) \underline{\chi f}, \underline{u}} &=
\int a \, d\mu^j,\\
\lim_{h\downarrow 0} \ang{b(x',hD') hD_x u, h D_x u}_{\pa M} &=
\int_{\pa M} b \, d\dot{\nu}.
\end{aligned}
\end{equation}

\subsection{Local study of the measure $\mu$}

\begin{lemma}
For $a=a_0(x,\xi')+a_1(x,\xi')\xi_1$ with $a_i\in \CR_c^{\infty}([0,\e)\times T^*\partial M)$, we have
$$\mu(H_pa) =2\Im \mu^j(a)-\dot{\nu}( a_1|_{x_1=0}).$$
\end{lemma}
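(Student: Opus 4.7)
The identity is a boundary-sensitive refinement of the interior transport relation $\mu(H_p a) = 2\Im\mu^j(a)$ established in \eqref{transport1}. The approach is to mimic the commutator argument of \eqref{basiccommutator} on the half-space $M = \{x_1 > 0\}$ with the Dirichlet boundary condition $u|_{x_1=0}=0$, and to track the boundary term produced by one-sided integration by parts against a test symbol linear in the normal momentum $\xi_1$.

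Let $A = a(x,hD)$ denote the Weyl quantization of the real symbol $a = a_0(x,\xi') + a_1(x,\xi')\xi_1$, so that $A^* = A$ on $L^2(M)$. Using $Pu = h\chi f$ we have $\langle APu, u\rangle_M = h\langle A\chi f, u\rangle_M$. For the other half of the commutator, the tangential operator $R + hE$ in the normal form \eqref{operatornormalcoords} is self-adjoint on $L^2(M)$ without producing boundary contributions, and applying Green's identity to the $(hD_{x_1})^2$ part of $P$ together with $u|_{\partial M}=0$ yields
\begin{equation*}
\langle PAu, u\rangle_M - \langle Au, Pu\rangle_M \;=\; ih\,\big\langle (Au)|_{\partial M},\,(hD_{x_1}u)|_{\partial M}\big\rangle_{\partial M};
\end{equation*}
only one of the two boundary terms in Green's formula survives, because $u$ vanishes on $\partial M$. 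Combining these two computations and using $A^*=A$ gives
\begin{equation*}
\frac{i}{h}\langle [P,A]u, u\rangle_M \;=\; 2\Im\langle A\chi f, u\rangle_M \;-\; \big\langle (Au)|_{\partial M},\,(hD_{x_1}u)|_{\partial M}\big\rangle_{\partial M}.
\end{equation*}

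I then pass to the limit $h\downarrow 0$ term by term. The left-hand side tends to $\mu(H_p a)$ by the semiclassical commutator-to-Poisson-bracket formula $\tfrac{i}{h}[P,A] = (H_pa)(x,hD) + O_{L^2\to L^2}(h)$, combined with the observation that $\langle Bu, u\rangle_M = \langle B\underline u, \underline u\rangle_{\Mtilde}$ for any such $B$ (since $\underline u$ is supported in $\overline{M}$). The term $2\Im\langle A\chi f, u\rangle_M$ converges to $2\Im\mu^j(a)$ directly from \eqref{defectmeasures2}. For the boundary term, the Dirichlet condition $u|_{x_1=0}=0$ kills the $a_0$ part of $A$ at the boundary, and a short symbol-level computation (using $u|_0=0$ twice) shows that the symmetrized $a_1\xi_1$ part reduces to $(Au)|_{x_1=0} = a_1(0,x',hD')(hD_{x_1}u)|_{x_1=0}$; the boundary pairing therefore equals $\langle a_1(0,x',hD')v,v\rangle_{\partial M}$ with $v := (hD_{x_1}u)|_{\partial M}$, which converges to $\dot\nu(a_1|_{x_1=0})$ by the defect-measure definition in \eqref{defectmeasures2}. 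Collecting the three limits yields the claimed identity.

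The main technical obstacle is ensuring that $\dot\nu$ is a well-defined finite Radon measure on $T^*\partial M$---equivalently, that the boundary trace $v = (hD_{x_1}u)|_{\partial M}$ is uniformly bounded in $L^2(\partial M)$. This follows from a standard semiclassical boundary trace estimate applied to the interior bound $\|u\|_{L^2_{\loc}}\leq C$ of Lemma~\ref{l:l2loc}, together with the equation $Pu = h\chi f$, which controls $\|hDu\|_{L^2_{\loc}}$ in a neighborhood of $\partial M$ and hence the $L^2$-norm of the normal trace.
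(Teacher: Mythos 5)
Your proposal is correct and follows essentially the same route as the paper: a commutator/Green's-identity computation on the half-space in which the Dirichlet condition kills the boundary term paired with $u$, the surviving term is identified as $\langle a_1\, hD_{x_1}u, hD_{x_1}u\rangle_{\partial M}$ via $(Au)|_{\partial M}=a_1(hD_{x_1}u)|_{\partial M}$, and the limit $h\downarrow 0$ produces $\mu(H_pa)$, $2\Im\mu^j(a)$, and $\dot\nu(a_1|_{x_1=0})$ respectively. The only cosmetic difference is your use of the Weyl quantization (self-adjoint $A$) versus the paper's explicit ordering $a_0(x,hD')+a_1(x,hD')hD_{x_1}$, which differ by $O(h)$ and hence not in the limit; your remark on the uniform $L^2(\partial M)$ bound for the Neumann trace is a point the paper leaves implicit.
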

\begin{proof}
Here we work in a small neighborhood of the boundary and write 
$$A=a_0(x,hD')+a_1(x,hD')hD_{x_1}.$$
Equation~\eqref{basiccommutator} must now be modified
owing to boundary terms arising in integration by parts.  In
particular,
\begin{multline*}
\langle (hD_{x_1})^2Au,u\rangle_M\\
=ih(\langle hD_{x_1}Au,u\rangle_{L^2(\partial M)}+ \langle Au,hD_{x_1}u\rangle_{L^2(\partial M)})+\langle Au,(hD_{x_1})^2u\rangle_{L^2(M)}.
\end{multline*}
There is also a term
arising when we integrate by parts to change $\langle f,Au\rangle $ to $\langle Au,f\rangle$ but this term is $O(h)$. Now, 
\begin{align*}
\langle hD_{x_1}Au,u\rangle_{L^2(\partial M)}&= \langle a_0 hD_{x_1}u+a_1(hD_{x_1})^2u,u\rangle_{L^2(\partial M)}+o(1)\\
&=\langle a_0 hD_{x_1}u+a_1(1+h^2\Delta_{\partial M})u,u\rangle_{L^2(\partial M)}+\langle a_1h\chi f,u\rangle_{\partial M}+o(1)
\end{align*}
Now, with Dirichlet boundary conditions, this is actually just 0.
Moreover, 
$$\langle Au,hD_{x_1}u\rangle_{L^2(\partial M)}=\langle [a_0+a_1hD_{x_1}]u,hD_{x_1}u\rangle =\langle a_1hD_{x_1}u,hD_{x_1}u\rangle.$$
So~\eqref{basiccommutator} now reads 
$$ih^{-1}\langle [P,A]u,u\rangle_M=\Im \langle A\chi f,u\rangle_M -\langle a_1hD_{x_1}u,hD_{x_1}u\rangle_{\partial M}+o(1).\qed$$\noqed
\end{proof}

Now, we want to upgrade this to a statement for all functions in $\CR_c^{\infty}(T^*M).$ 
First, we need to show that $\mu$ is supported on $S^*\!M$. 
\begin{lemma}
\label{l:suppBoundary}
Let $\mu$ be a defect measure associated to $u$ as above.  Then $\supp \mu\subset S^*\!M$ and $\mu(\chi^2)=1$. Moreover, for $b\in S^1(T^*M)$, supported away from $\partial M$, 
$$
\lim_{h\to 0}\|Op_h(b)\chi u\|_{L^2}^2=\mu(|b|^2\chi^2).
$$
\end{lemma}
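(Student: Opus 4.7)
The lemma has three assertions, and I will mostly adapt the argument of Lemma~\ref{l:h-osc}, with the boundary contribution tracked via the preceding commutator identity.

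\emph{Step 1 (Support on $S^*\!M$).} Fix $a\in \CR_c^{\infty}(T^*\!\Mtilde)$ vanishing in a neighborhood of the characteristic set $\{|\xi|_g^2=1\}$. When $\supp a$ lies in the interior of $M$, the argument of Lemma~\ref{l:h-osc} goes through verbatim: by ellipticity of $p$ on $\supp a$, there exists a compactly-supported $E\in\Psi^{-2}$ with $a(x,hD)=EP+O(h^\infty)$, and since the kernel of $E$ is supported away from $\pa M$, it annihilates the boundary contribution to $P\underline u$. Hence $\langle a(x,hD)\underline u,\underline u\rangle=\langle EPu,u\rangle_{M}+O(h^\infty)=h\langle E\chi f,u\rangle+O(h^\infty)\to 0$. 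For $a$ supported near $\pa M$, I work in the boundary normal coordinates of \eqref{operatornormalcoords} and use the fact that $a$ is off-characteristic: I decompose $a=a_0(x,\xi')+a_1(x,\xi')\xi_1+a_2(x,\xi)\,p$ (dividing $p=\xi_1^2+r(x,\xi')-1$ out from $a$ is possible because $p\neq 0$ on $\supp a$, with $a_0,a_1$ chosen so that the $\xi_1$-degree is at most $1$ in the remainder). The $a_2 p$ piece contributes to $\mu(a)$ a term of order $h$ by the same parametrix argument. For the $a_0+a_1\xi_1$ piece, I apply the preceding commutator identity and note that since $a$ itself vanishes off-characteristic, so do $a_0$ and $a_1$, and I can test with compactly-supported cut-off symbols in the tangential variables to conclude $\mu(a_0+a_1\xi_1)=0$ by a parametrix inversion of $H_p$ along bicharacteristics, combined with the already-established vanishing of $\dot\nu(a_1|_{x_1=0})$ at characteristic direction where glancing does not occur (the final term drops out because $a_1|_{x_1=0}$ is supported off the tangential characteristic set $\{r-1=0\}$).

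\emph{Step 2 (Normalization).} By the definition of $\mu$, applied with test symbol $\chi^2$ (which, being a multiplication operator, requires a standard approximation by a compactly supported symbol in $\xi$ combined with the support statement of Step 1 to discard the high-frequency tail),
\[
\mu(\chi^2)=\lim_{h\to 0}\langle \chi^2\underline u,\underline u\rangle_{L^2(\Mtilde)}=\lim_{h\to 0}\|\chi u\|_{L^2(M)}^2=1.
\]

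\emph{Step 3 ($L^2$-convergence of $\|Op_h(b)\chi u\|^2$).} For $b\in S^1(T^*\!M)$ with $\supp b$ compact and disjoint from $\pa M$, the operator $\chi(x)Op_h(b)^*Op_h(b)\chi(x)$ has principal symbol $|b|^2\chi^2$ and, crucially, its Schwartz kernel is supported away from the boundary. Fix $\psi\in \CR_c^{\infty}(T^*\!M)$ with $\psi\equiv 1$ on $\supp \chi\cap\{|\xi|_g^2\le 2\}$ and supported away from $\pa M$. Following the proof of Lemma~\ref{l:h-osc}, one writes
\[
\chi\,Op_h(b)^*Op_h(b)\chi\,(1-\psi(x,hD))=EP+O_{L^2_{\loc}\to L^2_{\comp}}(h^\infty)
\]
for some compactly-supported $E\in\Psi^0$ whose kernel is still away from $\pa M$, so that the boundary contribution to $P\underline u$ is killed. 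Thus
\[
\|Op_h(b)\chi u\|_{L^2}^2
=\langle \chi Op_h(b)^*Op_h(b)\chi\,\psi(x,hD)u,u\rangle+h\langle E\chi f,u\rangle+O(h^\infty),
\]
and sending $h\to 0$ gives $\mu(|b|^2\chi^2\psi)=\mu(|b|^2\chi^2)$ by Step 1.

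\emph{Main obstacle.} The technically delicate point is Step~1 near the boundary, where the extension-by-zero $\underline u$ introduces a boundary current $h\gamma_1 u\otimes\delta_{\pa M}$ in $P\underline u$. Handling this requires the careful decomposition $a=a_0+a_1\xi_1+a_2p$ and the application of the preceding lemma so that the boundary term appears in a controlled way (as $\dot\nu(a_1|_{x_1=0})$); any softer argument must confront the fact that standard interior parametrices do not annihilate distributions supported on $\pa M$.
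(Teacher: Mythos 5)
Your Steps 2 and 3 are essentially the paper's argument (the high-frequency tail is discarded by an elliptic parametrix, and the final claim reduces to the boundaryless case because $b$ is supported away from $\pa M$, so the boundary distributions in $P\underline{u}$ contribute only $O(h^\infty)$ by non-stationary phase). The problem is Step 1 near the boundary, where you correctly identify the obstacle --- the boundary current in $P\underline{u}=1_{x_1\geq 0}Pu - h\,\delta(x_1)\otimes h\pa_{x_1}u|_{x_1=0}$ (the $\delta'$ term vanishes by the Dirichlet condition) --- but your proposed resolution does not work. First, the appeal to ``the already-established vanishing of $\dot\nu(a_1|_{x_1=0})$'' off the tangential characteristic set is circular: at this stage of the argument nothing about the support of $\dot\nu$ has been proved (that $\dot\nu$ is supported in $\gl$ is the content of Lemma~\ref{l:glancing2}, which comes later and uses the present lemma). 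Second, ``a parametrix inversion of $H_p$ along bicharacteristics'' is not a meaningful tool off the characteristic set --- there the relevant structure is ellipticity of $P$, not transport --- so the claim $\mu(a_0+a_1\xi_1)=0$ is unsupported. Third, the division $a=a_0+a_1\xi_1+a_2p$ with remainder of $\xi_1$-degree $\leq 1$ is a Weierstrass-type preparation for a general (non-polynomial in $\xi_1$) symbol, and even granting it, the term you dismiss as ``order $h$ by the same parametrix argument'' is exactly where the delta distribution enters; you cannot wave it away.

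The paper's resolution is much more direct and requires no case split near the boundary: take the elliptic parametrix $E\in\Psi^{-2}$ with $a(x,hD)=EP+O(h^\infty)$ and apply it to $\underline{u}$ \emph{including} the boundary term. Since $\|h\pa_{x_1}u|_{x_1=0}\|_{L^2(\pa M)}\leq C$ (a standard hidden-regularity bound), one has $\|h\,\delta(x_1)\otimes h\pa_{x_1}u|_{x_1=0}\|_{H_h^{-1}}=O(h^{1/2})$, and $E:H_h^{-1}\to H_h^{1}\subset L^2$ boundedly; hence $a(x,hD)\underline{u}=a(x,hD)1_{x_1\geq 0}Pu+O_{L^2}(h^{1/2})$ and $\mu(a)=0$ for every off-characteristic $a\in\CR_c^\infty(T^*\Mtilde)$, with no restriction on $\supp a$ relative to $\pa M$. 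The same two-derivative gain (with $E\in S^{-2}$) is what justifies discarding the high-frequency tail in your Step 2; note that the support statement for $\mu$ alone does not suffice there, because the defect-measure convergence \eqref{defectmeasures2} only applies to symbols compactly supported in $\xi$.
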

\begin{proof}
Note that 
$$
P\underline{u}=1_{x_1\geq 0}Pu+h(-\delta(x_1)\otimes h\partial_{x_1}u|_{x_1=0}+h\delta'(x_1)\otimes u|_{x_1=0})
$$
Now, suppose $a\in \CR_c^{\infty}(T^*\Mtilde)$ has $\supp a\cap \{p=0\}=\emptyset.$ Then, there exist $E\in \Psi^{-2}$ so that $a(x,hD)=EP+O_{\mathcal{D}'\to \CR^{\infty}}(h^\infty)$. Therefore, 
\begin{align*}
&a(x,hD)\underline{u}=a(x,hD)1_{x_1\geq 0}Pu +hE(-\delta(x_1)\otimes h\partial_{x_1}u|_{x_1=0})
\end{align*}
Now, since $E:H_h^s\to H_h^{s+2}$, is bounded and $\|h\partial _{x_1}u|_{x_1=0}\|\leq C$, we have
$$a(x,hD)\underline{u}=a(x,hD)1_{x_1\geq 0}Pu +O_{L^2}(h^{1/2}).$$
In particular, this implies that $\mu(a)=0$ completing the proof.

To see that $\mu(\chi^2)=1$, let $\psi\in \CR_c^{\infty}(\re)$ with $\psi\equiv 1$ on $|s|\leq 2$. Then, as above there exists $E\in S^{-2}$ so that 
$$\chi(1-\psi)(|hD|)\chi=EP+O_{\mc{D}'\to \mc{S}}(h^\infty)$$
and hence again
$$\chi(1-\psi)(|hD|))\chi\underline{u}=O_{L^2}(h^{\frac{1}{2}}).$$
In particular, 
\begin{align*}
1=\langle \chi^2\underline{u},\underline{u}\rangle&=\langle \chi\psi(|hD|)\chi \underline{u},\underline{u}\rangle +O(h^{\frac{1}{2}})\\
&=\int \chi^2(x)\psi(|\xi|)d\mu=\int \chi^2d\mu
\end{align*}
where in the last line we use the fact that $\mu$ is supported on $S^*\!M$.

The last claim follows the same arguments as in the boundaryless case.
\end{proof}

We now introduce notation for the even and odd parts of smooth
functions on $T^*\!M.$
\begin{definition}
  For $a \in \CI(T^*\!M),$ let
\begin{align*}
  a_o(x,\xi_1,\xi') &=\frac{
                      a(x,\xi_1,\xi')-a(x,-\xi_1,\xi')}{2\xi_1},\\
  a_e(x,\xi_1,\xi') &=\frac{ a(x,\xi_1,\xi')+a(x,-\xi_1,\xi')}{2}
\end{align*}
Thus, $a=a_e+\xi_1 a_o$ and $a_e,$ $a_o$ are both even functions of $\xi_1.$
\end{definition}

Next we upgrade the test functions for which we can compute $\mu(H_pa)$. 
\begin{lemma}
\label{l:diff}
For $a\in \CR_c^{\infty}(T^*\! \Mtilde)$, 
$$\mu(H_pa)=2\Im \mu^j(a)-\dot{\nu}( a_o|_{x_1=0})$$
In particular, for $a=a(x,x_1\xi_1,\xi')\in \CR_c^{\infty}(\Tbstar\overline{\re^n_+})$, 
$$\mu(H_pa)=2\Im \mu^j(a).$$
\end{lemma}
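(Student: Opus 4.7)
The plan is to reduce the general statement to the previous lemma by modifying $a$ modulo the principal symbol $p$, exploiting the fact that $\mu$ is supported on $\{p=0\}$ and that $\mu^j$ vanishes on multiples of $p$ by Cauchy--Schwarz.

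\textbf{Step 1: Symbol reduction.} Given $a\in \CR_c^\infty(T^*\Mtilde)$, decompose $a=a_e+\xi_1 a_o$ where $a_e, a_o$ are even in $\xi_1$; hence each is a smooth function of $\xi_1^2$, so
\[
a_e=A_e(x,\xi_1^2,\xi'),\qquad a_o=A_o(x,\xi_1^2,\xi')
\]
for smooth $A_e, A_o$. Using $p(x,\xi)=\xi_1^2+r(x_1,x',\xi')-1$, I define
\[
\tilde{a}(x,\xi):= A_e\bigl(x,1-r(x_1,x',\xi'),\xi'\bigr)+\xi_1\,A_o\bigl(x,1-r(x_1,x',\xi'),\xi'\bigr).
\]
This $\tilde a$ is of the form $\tilde a_0(x,\xi')+\xi_1\tilde a_1(x,\xi')$ required by the previous lemma, with $\tilde a_1(x,\xi')=A_o(x,1-r(x_1,x',\xi'),\xi')$.

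\textbf{Step 2: Error is a multiple of $p$.} Applying Hadamard's lemma in the $\xi_1^2$ variable to $A_e$ and $A_o$, I can write
\[
a-\tilde a \;=\; p\cdot q,\qquad q\in \CR_c^\infty(T^*\Mtilde).
\]

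\textbf{Step 3: $\mu$ and $\mu^j$ do not see $pq$.} Since $H_p p=0$, we have $H_p(pq)=p\,H_p q$. Lemma~\ref{l:suppBoundary} gives $\supp\mu\subset S^*\!M=\{p=0\}$, so $\mu(p\,H_p q)=0$ and hence $\mu(H_p a)=\mu(H_p\tilde a)$. Similarly, the Cauchy--Schwarz inequality of Lemma~\ref{lemma:CS} (whose proof goes through unchanged in the present setting) yields
\[
|\mu^j(pq)|\leq \sqrt{\mu(|pq|)}\sqrt{\alpha(|pq|)}=0,
\]
so $\mu^j(a)=\mu^j(\tilde a)$.

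\textbf{Step 4: Apply the previous lemma and identify the boundary term.} The previous lemma applied to $\tilde a$ gives
\[
\mu(H_p\tilde a)=2\Im\mu^j(\tilde a)-\dot\nu(\tilde a_1|_{x_1=0}).
\]
Combining with Step~3, it remains to check that $\tilde a_1|_{x_1=0}$ coincides with the restriction $a_o|_{x_1=0}$, where the latter is read off on $T^*\partial M$ by substituting $\xi_1^2=1-r(0,x',\xi')$ into $A_o$. But by construction $\tilde a_1(0,x',\xi')=A_o(0,x',1-r(0,x',\xi'),\xi')$, which is exactly $a_o$ evaluated at $x_1=0$ on the characteristic set, proving the first formula.

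\textbf{Step 5: The ``in particular'' case.} If $a=a(x,x_1\xi_1,\xi')=F(x,x_1\xi_1,\xi')$, then
\[
a_o(x,\xi_1,\xi')=\frac{F(x,x_1\xi_1,\xi')-F(x,-x_1\xi_1,\xi')}{2\xi_1}=x_1\cdot\frac{F(x,x_1\xi_1,\xi')-F(x,-x_1\xi_1,\xi')}{2x_1\xi_1},
\]
and the second factor extends smoothly across $x_1=0$. Thus $a_o|_{x_1=0}=0$ and the boundary term drops out, giving $\mu(H_p a)=2\Im\mu^j(a)$.

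The main potential obstacle is purely notational, namely making the identification in Step~4 precise: one must be confident that $\dot\nu(a_o|_{x_1=0})$ is understood via substitution of $\xi_1^2=1-r$ (which is legitimate since $\dot\nu$, as the defect measure of $hD_{x_1}u|_{x_1=0}$, is concentrated over $\{r(0,x',\xi')\leq 1\}$ and weighted by $\xi_1^2=1-r$). Everything else is a routine algebraic manipulation combined with the support properties already established.
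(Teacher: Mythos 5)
Your proof is correct and follows essentially the same route as the paper's: decompose $a$ into its even and odd parts in $\xi_1$, substitute the value of $\xi_1^2$ on the characteristic set to reduce to the affine-in-$\xi_1$ form handled by the preceding lemma, and use $\supp\mu\subset S^*\!M$ (together with Lemma~\ref{lemma:CS} for $\mu^j$) to see that the substitution changes nothing. The only cosmetic difference is that you justify the substitution via the Hadamard factorization $a-\tilde a=pq$ and $H_p(pq)=pH_pq$, whereas the paper multiplies by $1_{S^*\Mtilde}$ and uses tangency of $H_p$ to $S^*\Mtilde$; these are equivalent, and your explicit Step~5 computation of $a_o|_{x_1=0}=0$ for $a=a(x,x_1\xi_1,\xi')$ is a welcome detail the paper leaves implicit.
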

\begin{remark}
    Here, $\Tbstar \overline{\re^n_+}$ denotes the $b$-cotangent
    bundle of this local model for a manifold with boundary,
    constructed so that
    smooth functions on this space are simply functions of
    ($x,x_1\xi_1,\xi')$.
     See Section~\ref{s:tb} for a more precise description of this
     space and for additional references. We are regarding $a$ here
     as extended to a smooth function on $T^*\Mtilde;$ the choice of
     extension is immaterial, as $\mu$ is supported on $M.$
\end{remark}
\begin{proof}
  Since $a_o$ and $a_e$ are both even functions, we may write
  $$
a_{e/o}(x,\xi_1,\xi') =\ta_{e/o}(x,\xi_1^2, \xi')
$$
for smooth functions $\ta_{e/o};$
thus,
$$a= \ta_e(x,\xi_1^2,\xi')+\ta_o(x,\xi_1^2,\xi')\xi_1.$$
Now, since $H_p$ is tangent to $S^*\Mtilde$, $H_p1_{S^*\Mtilde}a=1_{S^*\Mtilde}H_pa.$ In particular, since $\supp \mu \subset S^*\Mtilde$, 
$$\mu(H_pa)=\mu(H_p1_{S^*\Mtilde}a).$$
Now, 
$$S^*\Mtilde:=\{\xi_1^2-r(x,\xi')=0\}.$$ 
Therefore, 
$$1_{S^*\Mtilde}a=1_{S^*\Mtilde}[\ta_e(x,r(x,\xi'),\xi')+\ta_o(x,r(x,\xi'),\xi')\xi_1].$$
Let $b_{e/o}=a_{e/o}(x,r(x,\xi'),\xi')$. Then we have
\begin{align*}
\mu(H_pa)&=\mu(H_p1_{S^*\Mtilde}[b_e+b_o\xi_1])\\
&=\mu(H_p(b_e+b_o\xi_1))\\
&=2\Im \mu^j(b_e+b_o\xi_1){-}\dot{\nu}(b_o|_{x_1=0}).
\end{align*}
Now, since $\supp \mu\subset S^*\!M$, $\supp \mu^j\subset S^*\!M$ (by
Lemma~\ref{lemma:CS}) and hence 
$$
\mu^j(b_e+b_o\xi_1)=\mu^j(a).
$$
Therefore, we have
$$\mu(H_pa)=2\Im\mu^j(a)-\dot{\nu}(b_o|_{x_1=0}).$$
\end{proof}

Before proceeding further, we recall the decomposition of $T_{\pa
  M}^*M$ into elliptic, hyperbolic, and glancing regions.   In the
notation of \eqref{operatornormalcoords}, with $r$ the symbol of $R(x_1,x',hD_{x'}),$
we write
\begin{align*}
  (y,\xi_1,\xi') &\in \E \text{ if } r(0,x',\xi') >0\\
  (y,\xi_1,\xi') &\in \hyp \text{ if } r(0,x',\xi') <0\\ 
    (y,\xi_1,\xi') &\in \gl \text{ if }r(0,x',\xi') =0 
  \end{align*}
  to denote the elliptic resp.\ hyperbolic resp.\ glancing sets.

  We further split the glancing set into subsets as follows: we let
\begin{align*}
  \gl_d &:=\gl \cap \{H_p^2 x_1 >0\},\\
    \gl_g &:=\gl \cap \{H_p^2 x_1 <0\}
\end{align*}
these are the \emph{diffractive} and \emph{gliding} sets respectively.
We will also employ the filtration of the remainder of the glancing
set given by the order of contact with the boundary as follows: 
$$
\gl^k:=\{ \rho \in \gl\mid (H_p^j x_1)(\rho)=0,\text{ for }0\leq j<k,\text{ and }(H_p^kx_1)(\rho)\neq 0\}.
$$ 
(We remark that this definition differs from the $G^k$ defined in
\cite[Section 24.3]{Hormander:v3} in that our $\gl^k$ does not include
higher-order glancing as well.)

With an open set $U \subset \Tbstar\!M$ fixed, we will also use the notation
$$
\tgl^k:= \{\rho\in U\mid \rho \text{ is connected to } \gl^k \text{
  by a generalized bicharacteristic}\}.
  $$
  We further let
  $$
\tgl^{\geq k} :=\bigcup_{k'\geq k} \tgl^{k'}.
  $$

\medskip
\needspace{2cm}
\noindent{\bf{Near Glancing:}}
\medskip

We now commence the study of the measure $\mu$ near the glancing set
$\gl.$  Here we follow an inductive strategy employed in \cite{BurqLebeau:2001}.

\begin{lemma}
\label{l:glancing1}
There exists a positive measure $\mu^\partial$ on the glancing set ${\gl}$ so that 
$$
\mu=\mu1_{x_1>0}+\delta(x_1)\otimes \delta(H_px_1)\otimes \mu^{\partial}.
$$
In particular, $\mu(\mathcal{H})=0.$
\end{lemma}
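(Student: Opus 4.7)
The plan is to establish the decomposition in two stages: a trivial support-based splitting, then the exclusion of mass at hyperbolic boundary points via Proposition~\ref{p:volterra}.

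For the first stage, since $\underline u$ vanishes on $\{x_1<0\}$ so does $\mu$, and combined with $\supp\mu\subset S^*\!M$ from Lemma~\ref{l:suppBoundary} this gives $\supp\mu\subset S^*\!M\cap\{x_1\geq 0\}$. I would then write $\mu=\mu 1_{x_1>0}+\nu$, where $\nu:=\mu 1_{x_1=0}$ is a positive Radon measure supported on $T^*_{\partial M}\Mtilde\cap\{\xi_1^2+r(0,x',\xi')=0\}$. The substance of the lemma is the further claim that $\nu$ is concentrated on $\{\xi_1=0\}$; by the characteristic equation at $x_1=0$, this is equivalent to support on $\gl$, since $\xi_1=0$ there forces $r(0,x',\xi')=0$.

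The crux is to rule out mass at any hyperbolic boundary point $\rho_0=(0,x_0',\xi_1^0,\xi_0')$ with $\xi_1^0\neq 0$. Since $H_p x_1=2\xi_1^0\neq 0$, the Hamilton flow is transverse to $\{x_1=0\}$ near $\rho_0$, and any smooth function is microlocally $b$-smooth there (because $x_1$ and $x_1\xi_1$ differ by the non-vanishing factor $\xi_1$). Thus Lemma~\ref{l:diff} provides the propagation identity $\mu(H_p a)=2\Im\mu^j(a)$ without boundary correction for test functions supported in a small neighborhood of $\rho_0$. I would then apply Proposition~\ref{p:volterra} with $X=H_p$ and $\Sigma$ a small neighborhood of $\rho_0$ in $\{x_1=0\}\cap S^*\!M$: the backward-flow region $F((-\infty,0)\times\Sigma)$ lies in $\{x_1<0\}$ where $\mu=0$, the Cauchy--Schwarz bound is Lemma~\ref{lemma:CS}, and transversality holds. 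The proposition yields $\mu(F([0,L]\times\Sigma))\leq\frac{4L^2}{\pi^2}\alpha(F([0,L]\times\Sigma))$, and sending $L\downarrow 0$ while using local finiteness of $\alpha$ forces $\nu(\Sigma)=0$.

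Once the support property is established, I would identify $\nu$ with its pushforward to $T^*\partial M$, defining the positive measure $\mu^\partial$ on $\gl$. The notation $\delta(x_1)\otimes\delta(H_p x_1)\otimes\mu^\partial$ simply encodes this codimension-two concentration, with $H_p x_1=2\xi_1$ as the transverse coordinate. The corollary $\mu(\hyp)=0$ is immediate since $\hyp\subset\{x_1=0,\xi_1\neq 0\}$. The main obstacle I anticipate is making rigorous the assertion that ``any smooth function is microlocally $b$-smooth at $\rho_0$,'' and verifying that the specific test functions required by Proposition~\ref{p:volterra} --- of the form $f_{\psi,a}(F(t,q))=\psi(t)a(q)$ along flow trajectories from $\Sigma$ --- genuinely fall within the $b$-algebra near the boundary, so that the correction-free formula from Lemma~\ref{l:diff} applies to them; transversality at hyperbolic points is what ultimately makes this identification work, but the bookkeeping requires care.
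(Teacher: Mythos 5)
Your first stage (splitting $\mu=\mu 1_{x_1>0}+\mu 1_{x_1=0}$ and reducing to the claim that $\mu 1_{x_1=0}$ charges only $\{\xi_1=0\}$) is fine, but the second stage has a genuine gap, and it is exactly the one you flagged at the end. The assertion that near a hyperbolic point ``any smooth function is microlocally $b$-smooth because $x_1$ and $x_1\xi_1$ differ by the non-vanishing factor $\xi_1$'' is false: a $b$-function is a smooth function of $(x,x_1\xi_1,\xi')$, and at $x_1=0$ every such function is independent of $\xi_1$ --- the degeneracy of $(x_1,\xi_1)\mapsto(x_1,x_1\xi_1)$ occurs where $x_1=0$, not where $\xi_1=0$. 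Your flow-adapted test functions $f_{\psi,a}$ restrict on $\{x_1=0\}$ to $\psi(0)a(q)$ with $a$ an essentially arbitrary function on $\Sigma$, hence have nonvanishing odd part in $\xi_1$ there. Lemma~\ref{l:diff} then gives $\mu(H_pf_{\psi,a})=2\Im\mu^j(f_{\psi,a})-\dot{\nu}\bigl((f_{\psi,a})_o|_{x_1=0}\bigr)$, and the correction term is generically nonzero precisely on $\hyp$, where the Neumann-data measure $\dot{\nu}$ lives. So hypothesis \eqref{e:derMeasure} of Proposition~\ref{p:volterra} is not verified and the proposition cannot be invoked. A second, independent problem is directional: $F((-\infty,0)\times\Sigma)\subset\{x_1<0\}$ only on the half of $\hyp$ where $H_px_1=2\xi_1>0$; on the other half the backward flow enters $\{x_1>0\}$, where $\mu$ need not vanish, so the hypothesis $\mu(F((-\infty,0)\times\Sigma))=0$ also fails there.

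The paper's proof avoids all of this with a short commutator argument that never leaves the $b$-algebra. Take $a_\e=\e\,\chi(\e^{-1}x_1)\varphi(x,\xi')$ with $\chi(0)=\chi'(0)=1$. Since $a_\e$ is independent of $\xi_1$, its odd part vanishes identically and Lemma~\ref{l:diff} gives the correction-free identity $\mu(H_pa_\e)=2\Im\mu^j(a_\e)$. As $\e\downarrow 0$ the right-hand side tends to $0$ by dominated convergence (since $a_\e\to 0$ boundedly), while $H_pa_\e=\chi'(\e^{-1}x_1)\varphi\,H_px_1+O(\e)\to 1_{x_1=0}\varphi\,H_px_1$, so $\mu 1_{x_1=0}\bigl((H_px_1)\varphi\bigr)=0$ for every $\varphi$. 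Hence the positive measure $\mu 1_{x_1=0}$ is supported in $\{H_px_1=2\xi_1=0\}$, which over the boundary and on the characteristic set is $\gl$; this gives the stated form and in particular $\mu(\hyp)=0$. The moral is that the exclusion of hyperbolic mass comes from the scaling $\e\chi(\e^{-1}x_1)$ applied to $\xi_1$-independent symbols, not from the Volterra estimate; Proposition~\ref{p:volterra} enters the boundary argument only later, after the invariance of $\pi_*\mu$ under the generalized bicharacteristic flow has been established.
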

\begin{proof}
Let $a_\epsilon=\varphi(x,\xi')\epsilon \chi(\epsilon^{-1}x_1)$ with $\chi\in \CR_c^{\infty}(\mathbb{R})$, $\chi(0)=\chi'(0)=1$. 
 
By Lemma~\ref{l:diff},
$$\mu(H_pa_\epsilon)=2\Im \mu^j(a_\epsilon).$$
By the Dominated Convergence Theorem, 
$$2\Im \mu^j(a_\epsilon)\to 0$$
On the other hand,
$$H_pa_\epsilon=\chi'(\epsilon^{-1}x_1)\varphi(x,\xi')H_px_1+O(\epsilon).$$
So, by dominated convergence,
$$\mu(H_pa_\epsilon)\to \mu(1_{x_1=0}H_px_1\varphi).$$
In particular, 
$$\mu1_{x_1=0}(H_px_1\varphi)=0$$
for any $\varphi$ and hence $\mu1_{x_1=0}$ is supported on ${\gl}$. Since it is a positive measure, $\mu1_{x_1=0}$ takes the form specified.
\end{proof}

\begin{lemma}
\label{l:glancing2}
 We have
$$ -(H_p^2x_1)\mu^\partial=4\dot{\nu}1_{\gl}.$$
In particular, since $\mu^\partial$ and $\dot{\nu}$ are positive
measures,
$$\supp \mu^\partial \cup \supp \dot{\nu} \subset \{H_p^2x_1\leq0\},$$
hence
$$
\mu(\gl_d)=0,
$$
and for $E\subset \gl,$
$$\dot{\nu}(E)=\dot\nu \big( E\cap\{ H_p^2x_1<0\}\big).$$

\end{lemma}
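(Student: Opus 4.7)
The plan is to prove the identity $-(H_p^2 x_1)\mu^\partial = 4\dot\nu 1_\gl$ directly via Lemma~\ref{l:diff} with a carefully chosen test function, and then derive the three consequences by elementary manipulations using positivity of $\mu^\partial$ and $\dot\nu$.

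First, I would fix $\varphi \in \CR_c^\infty(T^*\pa M)$ supported in a small neighborhood of $\gl$, pick $\chi \in \CR_c^\infty(\re)$ even with $\chi(0)=1$, and set
\[
a_\epsilon(x,\xi) = \varphi(x',\xi')\,\chi(\epsilon^{-1}x_1)\,\xi_1.
\]
Then $a_{\epsilon,o} = \varphi\,\chi(\epsilon^{-1}x_1)$ so $a_{\epsilon,o}|_{x_1=0} = \varphi$. Computing $H_p a_\epsilon$ using $H_p\xi_1 = \partial_{x_1} r$ and writing $\tilde H$ for the tangential part of $H_p$, one finds
\[
H_p a_\epsilon = \partial_{x_1} r\cdot\varphi\,\chi(\epsilon^{-1}x_1) + \xi_1\,\chi(\epsilon^{-1}x_1)\,\tilde H\varphi + 2\xi_1^2\,\varphi\,\chi'(\epsilon^{-1}x_1)\epsilon^{-1}.
\]
Lemma~\ref{l:diff} then gives
\[
\mu(H_p a_\epsilon) = 2\Im \mu^j(a_\epsilon) - \dot\nu(\varphi).
\]

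Next I would take $\epsilon\to 0$ and identify each limit. Using the decomposition $\mu = \mu 1_{x_1>0} + \delta(x_1)\otimes \delta(H_p x_1)\otimes \mu^\partial$ from Lemma~\ref{l:glancing1}: on the boundary piece (supported where $\xi_1 = 0$), only the first term survives, giving $\mu^\partial(\partial_{x_1}r\,\varphi) = \tfrac{1}{2}\mu^\partial(H_p^2 x_1\cdot \varphi)$. The first two terms vanish on the interior in the limit by dominated convergence since $\mu 1_{x_1>0}$ has no atom at $\{x_1=0\}$. The Cauchy--Schwarz bound from Lemma~\ref{lemma:CS} kills $\Im \mu^j(\xi_1\varphi\chi)$, since $\mu^j$ at the boundary is supported where $\xi_1=0$. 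Thus in the limit,
\[
\tfrac{1}{2}\mu^\partial(H_p^2 x_1\cdot \varphi) + \lim_{\epsilon \to 0}\int 2\xi_1^2\,\varphi\,\chi'(\epsilon^{-1}x_1)\epsilon^{-1}\,d\mu1_{x_1>0} = -\dot\nu(\varphi).
\]
The crucial identification is that the interior limit equals $-\dot\nu(\varphi) - \tfrac{1}{2}\mu^\partial(H_p^2 x_1\cdot\varphi)$, which together with the desired conclusion forces this limit to equal $\dot\nu(\varphi)$. To extract it, I would use the Dirichlet condition $u|_{x_1=0} = 0$ via a Taylor expansion $u(x_1,x') = x_1 h^{-1} (hD_{x_1} u)|_{x_1=0} + O(x_1^2)$, so that near the boundary the $L^2$ mass of $\underline{u}$ in an $\epsilon$-slab is controlled by the defect measure $\dot\nu$ of the normal trace; pairing with $\xi_1^2 \chi'(\epsilon^{-1}x_1)\epsilon^{-1}$ and taking $\epsilon\to 0$ produces exactly $\dot\nu(\varphi)$. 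Varying $\varphi$ supported near $\gl$ gives $-(H_p^2 x_1)\mu^\partial = 4\dot\nu1_\gl$.

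Given the main identity, the consequences are immediate. Since $\mu^\partial \geq 0$ and $\dot\nu 1_\gl \geq 0$ are positive measures, the equality $-(H_p^2 x_1)\mu^\partial = 4\dot\nu 1_\gl$ forces both measures to be supported in $\{H_p^2 x_1 \leq 0\}$. Then $\mu(\gl_d) = 0$ follows since $\mu|_{x_1=0}$ is $\delta(\xi_1)\otimes\mu^\partial$ by Lemma~\ref{l:glancing1} and $\gl_d \subset \{H_p^2 x_1 > 0\}$ is disjoint from $\supp\mu^\partial$. Finally, the identity shows that $\dot\nu 1_\gl$ vanishes on the zero set of $H_p^2 x_1$, so for any $E \subset \gl$, $\dot\nu(E) = \dot\nu(E \cap \{H_p^2 x_1 < 0\})$.

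The main obstacle is the identification of $\lim_{\epsilon\to 0} \int 2\xi_1^2\varphi\chi'(\epsilon^{-1}x_1)\epsilon^{-1}\,d\mu1_{x_1>0}$ with $\dot\nu(\varphi)$: although this is morally a statement that near a Dirichlet boundary the normal derivative trace $\dot\nu$ records the limiting behavior of the $\xi_1^2$-moment of $\mu$, making this rigorous requires a careful boundary-layer analysis combining propagation of singularities for $\mu$ in the interior with the explicit vanishing of $\underline{u}$ at $x_1 = 0$.
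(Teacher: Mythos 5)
Your overall strategy (a test function linear in $\xi_1$, concentrated near $x_1=0$, fed into Lemma~\ref{l:diff}) is the right one, and your derivation of the three consequences from the main identity is fine. But there is a genuine gap in the proof of the identity itself, and it is exactly the point you flag as ``the main obstacle.'' The paper avoids that obstacle entirely by localizing \emph{also in the tangential cotangent variables}: it takes $a_\e=\chi(\e^{-1}x_1)\,\chi(\e^{-1}r(x,\xi'))\,2a(x,\xi')\,\xi_1$ with $\chi(0)=1$ and $\chi'(0)=0$. On $S^*\!M$ one has $\xi_1^2=-r(x,\xi')$ up to the constraint $p=0$, so on the support of $\chi(\e^{-1}r)$ the normal momentum satisfies $\xi_1=O(\sqrt{\e})$; hence the dangerous term $\e^{-1}\chi'(\e^{-1}x_1)\cdot\xi_1^2\cdot(\cdots)$ is $O(|\chi'(\e^{-1}x_1)|)$ and tends to $0$ by dominated convergence (here the choice $\chi'(0)=0$ matters), and $a_\e=O(\sqrt\e)$ on $S^*\!M$ so $2\Im\mu^j(a_\e)\to0$. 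No boundary-layer computation is needed.

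Without that tangential cutoff your argument does not close, for two reasons. First, the limit $\lim_{\e\to0}\int 2\xi_1^2\varphi\,\chi'(\e^{-1}x_1)\e^{-1}\,d\mu 1_{x_1>0}$ is a nontrivial boundary-layer quantity over hyperbolic points (where $\xi_1$ is bounded away from $0$), and your proposed identification of it with $\dot\nu(\varphi)$ via a Taylor expansion $u=x_1 h^{-1}(hD_{x_1}u)|_{x_1=0}+O(x_1^2)$ is not rigorous: the remainder involves $\pa_{x_1}^2u\sim h^{-2}u$, so it is not uniformly controlled as $h\to0$; making this precise is essentially the hidden-regularity analysis of Burq--Lebeau, not an elementary step. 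Moreover, your reasoning here is circular as written --- you determine what the limit ``must'' be from the conclusion you are trying to prove. Second, even granting that limit, the $\dot\nu$ term your test function produces is $\dot\nu(\varphi)$, not $\dot\nu(1_{\gl}\varphi)$: every neighborhood of $\gl$ in $T^*\pa M$ contains hyperbolic points which $\dot\nu$ may charge, so the resulting identity would read $-(H_p^2x_1)\mu^\pa=4\dot\nu$ on a neighborhood of $\gl$ rather than $-(H_p^2x_1)\mu^\pa=4\dot\nu 1_{\gl}$, which is false in general. The factor $\chi(\e^{-1}r(x,\xi'))$ in the paper's test function is what produces the indicator $1_{\gl}$ in the limit of the $\dot\nu$ term. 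I recommend rewriting the proof with the paper's choice of $a_\e$.
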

\begin{proof}
Since $H_px_1=2\xi_1$, 
$$H_p(2a(x,\xi)\xi_1)=aH_p^2x_1+2\xi_1H_pa.$$
Let $a_\e =\chi(\e^{-1}x_1)\chi(\e^{-1}r(x,\xi'))2a(x,\xi')\xi_1$ where $\chi\in \CR_c^{\infty}(\re)$ has $\chi(0)=1$, $\chi'(0)=0$. 
Then
\begin{align*}
H_pa_\e&=\e^{-1}\chi'(\e^{-1}x_1)\chi(\e^{-1}r(x,\xi'))2a\xi_1H_px_1\\
&\quad+\e^{-1}\chi(\e^{-1}x_1)\chi'(\e^{-1}r(x,\xi')) 2a\xi_1H_pr\\
&\qquad +2\xi_1\chi(\e^{-1}x_1)\chi(\e^{-1}r(x,\xi'))H_pa+a\chi(\e^{-1}x_1)\chi(\e^{-1}r(x,\xi'))H_p^2x_1\\
&=a\chi(\e^{-1}x_1)\chi(\e^{-1}r(x,\xi'))H_p^2x_1+O(1)(|\chi'(\e^{-1}x_1)|+|\chi'(\e^{-1}r(x,\xi'))|+\e^{1/2})
\end{align*}
Here we have used that on $S^*\!M,$ $H_pr=-H_p\xi_1^2=O(\xi_1)$.
Then by dominated convergence,
$$
\mu(H_pa_\e)\to \frac{1}{2}\mu^\partial([H_p^2x_1]a) .
$$
On the other hand, since $a_\e=O(\sqrt{\e})$ on $S^*\!M,$ 
$$\mu(H_p a_\e)=2\Im \mu^j(a_\e)-\dot{\nu}(2 a(0, x', \xi')
\chi(\e^{-1} r))\to -\dot{\nu}(21_{{\gl}}a)$$
So, $-H_p^2x_1\mu^\partial =4\dot{\nu}1_{{\gl}}$ as claimed.
\end{proof}

\subsection{Invariance of the measure $\mu$}\hfill

\medskip
\needspace{2cm}
\noindent{\bf{The $b$-cotangent bundle, $\Tbstar M$}}
\medskip

\label{s:tb}
In our discussion of the invariance of $\mu$, it will be convenient to have some facts about the $b$-cotangent bundle, $\Tbstar M$. We refer the reader to~\cite[Section 18.3]{Hormander:v3} (where the notation $\tilde{T}^*M$ is used) and~\cite{Vasy:08} for more details on the $\Tbstar M$. Recall that $\Tbstar M$ is the dual to the vector bundle of vector fields tangent to $\partial M$. In local coordinates with $\partial M=\{x_1=0\}$, the bundle of vector fields tangent to $M$ is generated over $\CR^{\infty}(M)$ by the vector fields
$$
\{x_1\partial_{x_1},\partial_{x_2},\dots,\partial_{x_n}\}.
$$
Over $x_1>0$, $\Tbstar M$ is canonically diffeomorphic to $\TM$ with 
$$
(x,\zeta,\xi')\mapsto (x,\zeta/x_1,\xi').
$$
In fact, we can identify $\TM$ as a subset of $\Tbstar M$ using the canonical projection map $\pi:\TM\to \Tbstar M$ given in local coordinates by 
$$
\pi:(x,\xi_1,\xi')\mapsto (x,x_1\xi_1,\xi').
$$
Using this map, we can push forward the measure $\mu$ to a measure on $\Tbstar M$. It is actually this pushforward $\pi_*\mu$ for which we will show invariance under the generalized bicharacteristic flow.

Define
$$
H_p^G=H_p+\frac{H_p^2x_1}{H_{x_1}^2p}H_{x_1}.
$$
That is, $H_p^G$ is the gliding vector field. Then $H_p^G$ is tangent
to  $\gl$ (cf.\ \cite[Section 24.3]{Hormander:v3}).  
Let also $\varphi_t:\Tbstar M\to \Tbstar M$ denote the generalized bicharacteristic flow on $\Tbstar M$.

\medskip
\needspace{2cm}
\noindent{\bf{Invariance}}
\medskip

The aim of the rest of this section is to show that 
\begin{equation}
\label{e:diffBoundary}
\pi_*\mu(q\circ\varphi_t)-\pi_*\mu(q)=\int_0^t2\Im
\pi_*\mu^j(q\circ\varphi_s)ds,\qquad q\in \CR_c^{\infty}(\Tbstar M).
\end{equation}

The main lemma is as follows.
\begin{lemma}
\label{l:equiv}
Let $\mu$ and $\mu_1$ be measures on $T^*\!M$ supported by $S^*\!M$ with $\mu_1\ll\mu$. The following are equivalent:
\begin{enumerate}
\item \label{cond1}
\begin{equation}
\label{e:inv}
\pi_*\mu(q\circ\varphi_t)-\pi_*\mu(q)=\int_0^t\pi_*\mu_1(q\circ\varphi_s)ds,\qquad
q\in \CR_c^{\infty}(\Tbstar M). 
\end{equation}
\item \label{cond2}For $a=a(x_1,x',x_1\xi_1,\xi')$
\begin{equation}
\label{e:diff}
\mu(H_pa)=\mu_1(a).
\end{equation}
and $\mu({\gl_d}\cup{\mc{H}})=0$. 
\end{enumerate}
\end{lemma}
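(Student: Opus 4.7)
The plan is to prove the two implications by moving between the infinitesimal form (condition (2)) and its integrated form (condition (1)) along the generalized bicharacteristic flow. The key identification is $\pi^\ast: \CR_c^\infty(\Tbstar M) \hookrightarrow \CR^\infty(T^*M)$, sending $q$ to a $b$-symbol $a(x_1, x', x_1\xi_1, \xi')$; for fixed $q$ I set $a_t(x,\xi) := q(\varphi_t(\pi(x,\xi)))$, so each $a_t$ is again a $b$-symbol of the form appearing in (2), with $a_0 = a$.

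For \textbf{(2) $\Rightarrow$ (1)}: Off the exceptional set $\gl_d \cup \mathcal{H}$, which carries no $\mu$-mass by hypothesis, the map $\pi$ intertwines the Hamilton flow of $p$ on $T^*M$ with the generalized bicharacteristic flow on $\Tbstar M$, so $\partial_t a_t = H_p a_t$ pointwise $\mu$-a.e. Pairing with $\mu$, applying (2) to obtain $\tfrac{d}{dt}\mu(a_t) = \mu(H_p a_t) = \mu_1(a_t)$, and integrating in $t$ (Fubini applies since $\mu,\mu_1$ are Radon and $a_t$ is uniformly bounded in $(x,\xi,t)$) yields
\begin{equation*}
\pi_*\mu(q \circ \varphi_t) - \pi_*\mu(q) = \mu(a_t) - \mu(a_0) = \int_0^t \mu_1(a_s)\,ds = \int_0^t \pi_*\mu_1(q \circ \varphi_s)\,ds,
\end{equation*}
which is exactly (1).

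For \textbf{(1) $\Rightarrow$ (2)}: I first derive $\mu(\mathcal{H} \cup \gl_d) = 0$. At a hyperbolic point $\rho_0 \in \Tbstar M$, the generalized flow has distinct left and right tangent vectors at $\rho_0$ (corresponding to the two preimages in $T^*M$ with $\xi_1 = \pm\sqrt{-r(0, x', \xi')}$), so for generic $q \in \CR_c^\infty(\Tbstar M)$, the function $t \mapsto q(\varphi_t(\rho_0))$ has a genuine corner at $t = 0$; a $\mu$-mass near $\rho_0$ would then produce mismatched left and right derivatives of $t \mapsto \pi_*\mu(q \circ \varphi_t)$ at $0$, whereas (1) (valid for all $t \in \re$ by time-translation) forces these one-sided derivatives to coincide with $\pi_*\mu_1(q)$. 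An analogous tangential-escape argument handles $\gl_d$. With this in hand, differentiating (1) at $t = 0$ and applying dominated convergence (using the pointwise $\mu$-a.e.\ identity $\partial_t a_t|_{t=0} = H_p a$ and uniform bounds on $t^{-1}(a_t - a)$) identifies $\pi_*\mu_1(q) = \mu(H_p a)$, establishing the infinitesimal relation.

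The \textbf{main obstacle} is reconciling the generator of $\varphi_t$ near the glancing set with $H_p$: on the gliding set $\gl_g$, where $\mu$ may still be supported, $\varphi_t$ is generated by the gliding vector field $H_p^G = H_p + \tfrac{H_p^2 x_1}{H_{x_1}^2 p} H_{x_1}$, not by $H_p$. This turns out not to be a genuine obstruction for $b$-symbols: $H_p^G - H_p$ is a multiple of $H_{x_1} \propto \partial_{\xi_1}$, and for $a = \tilde{a}(x_1, x', x_1\xi_1, \xi')$ one has $\partial_{\xi_1} a = x_1 \partial_\eta \tilde{a}$, which vanishes at $x_1 = 0$. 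Hence $b$-symbols are blind to the correction, and the identification $\partial_t a_t = H_p a_t$ persists onto $\gl_g$, closing the equivalence.
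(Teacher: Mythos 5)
Your direction \eqref{cond1} $\Rightarrow$ \eqref{cond2} is essentially the paper's argument (transversality of $\mc{H}$ and $\gl_d$ to the flow kills their $\mu$-mass, then differentiate at $t=0$), and your observation that $H_p^G-H_p$ annihilates $b$-symbols at $x_1=0$ is correct and is used in the paper. The genuine gap is in \eqref{cond2} $\Rightarrow$ \eqref{cond1}, which is the hard direction. Your argument rests on applying \eqref{e:diff} to $a_t:=\pi^*(q\circ\varphi_t)$ and writing $\frac{d}{dt}\mu(a_t)=\mu(H_pa_t)=\mu_1(a_t)$. But $a_t$ is \emph{not} a smooth function of $(x_1,x',x_1\xi_1,\xi')$: the generalized bicharacteristic flow is merely continuous, and $q\circ\varphi_t$ has corners across the hyperbolic set (where trajectories reflect) and worse behaviour near glancing. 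Condition \eqref{cond2} is only assumed for smooth $b$-symbols, so it cannot be invoked for $a_t$ without an approximation argument, and that approximation is precisely where the content lies: testing $\mu(H_pb)$ against general smooth $b$ produces extra boundary terms (in the paper, the term $\mu(1_{x_1=0}\partial_{x_1}r\,b_o)$ and a measure $u^0$ supported on $\mc{H}$), and one must check, as in Lemma~\ref{l:hypInvariant}, that the flow-integrated test function is even in $\xi_1$ at the boundary so that these terms drop out across hyperbolic reflections.

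More seriously, even granting the formal computation, the infinitesimal relation \eqref{e:diff} together with $\mu(\gl_d\cup\mc{H})=0$ does \emph{not} by itself determine that $\mu$ propagates along the Melrose--Sj\"ostrand flow at higher-order tangencies: as the paper's remark after the $\gl_g$ case explains, \eqref{e:diff} is a priori consistent both with mass sticking to the boundary and with mass instantly detaching from it, and at points of $\gl^k$ with $k\geq 3$ neither alternative is excluded by the arguments you give for $\gl_d$ and $\gl_g$. Ruling out a jump of the measure there requires the layer-stripping induction on the order of tangency: one isolates the possible singular part $\Upsilon=1_{\tgl_0}\mu$ supported on $\gl^k$, shows via the one-sided traces $\tr\dot\mu_\pm$ that $\Upsilon(H_pq)$ is bounded by $\sup|q|$, and concludes $\Upsilon=0$ and $\tr\dot\mu_-=\tr\dot\mu_+$ from transversality of $H_p$ to $\gl^k$. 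None of this appears in your proposal, so the implication \eqref{cond2} $\Rightarrow$ \eqref{cond1} is not established.
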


\noindent Once we prove this, Lemmas~\ref{l:diff}
and~\ref{l:glancing2} imply that $\mu$ satisfies condition~\eqref{cond2} with $\mu_1=2\Im\mu^j$ and hence satisfies~\eqref{e:diffBoundary}.
\medskip

\noindent{\bf{Proof of Lemma~\ref{l:equiv}}}\par
Here we follow \cite[Section 3.3]{BurqLebeau:2001}.

\
 
\noindent \textsc{\eqref{cond1} implies \eqref{cond2}:}

Let $a\in \CR_c^{\infty}(T^*\!M)$ with $a=a(x,x_1\xi_1,\xi')$. Then there exists $q\in \CR_c^{\infty}(\Tbstar M)$ such that $a=\pi^*q$.  Then, $q\circ\varphi_s(\rho)$ is differentiable from the left and right for all $s$ and 
\begin{equation}
\label{e:alongFlow}
\partial_s^\mp (q\circ\varphi_s)(\rho)=H_pa(\pi_{\pm}^{-1}(\varphi_s(\rho))),
\end{equation}
where $\pi_{\pm}^{-1}$ denotes the outward and inward pointing inverses of $\pi$. In particular, for $s$ such that $\varphi_s(\rho)\notin \mc{H}$, 
$$
\partial_s(q\circ\varphi_s)(\rho)=H_pa(\varphi_s(\rho)).
$$
 Now, $\pi_*\mu(\mc{H})=0$ since $\pi_*\mu$ satisfies~\eqref{e:inv}
 and $\mc{H}$ is transverse to the flow $\varphi_s$. Therefore, since
 $\pi_*\mu$ satisfies~\eqref{e:inv}, differentiating yields
 $$
 \pi_*\mu(\partial_t q\circ\varphi_t|_{t=s})= \pi_*\mu_1(q\circ\varphi_s)
 $$ 
 and setting $s=0$, gives
 $$\mu(H_pa)=\mu_1(a).$$
Finally, since $\gl_d$ is transverse to the flow, and $\pi_*\mu$
satisfies \eqref{cond1}, $\pi_*\mu(1_{\gl_d})=\mu(1_{\gl_d})=0$.  This
completes the proof of \eqref{cond2}.\qed 

\

\noindent \textsc{\eqref{cond2} implies \eqref{cond1}:}

\begin{lemma}\label{lemma:Hpbdecomp}
For $b\in \CR_c^{\infty}(T^*\!M)$, 
$$\mu(H_pb)=\mu_1(b)+\mu(1_{x_1=0}\partial_{x_1}rb_o|_{\xi_1^2=r(x,\xi')})+u^0(b_o|_{x_1=0,\xi_1^2=r(0,x',\xi')})$$
where $u^0$ is a measure supported $\mc{H}$,  $1_{x_1=0}\mu$ is supported by $\gl\setminus \gl_d$, and 
$$
b_o=\frac{b(x,\xi_1,\xi')-b(x,-\xi_1,\xi')}{2\xi_1}
$$
is the $\xi_1$ odd part of $b$. 
\end{lemma}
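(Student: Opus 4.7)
Write $b = b_e + \xi_1 b_o$ with $b_e, b_o$ smooth and $\xi_1$-even, so that $b_e = \tilde b_e(x,\xi_1^2,\xi')$ and $b_o = \tilde b_o(x,\xi_1^2,\xi')$ for smooth $\tilde b_{e/o}$. Since $\xi_1^2 = r$ on $S^*M = \{p = 0\}$, the difference $\tilde b_{e/o}(x,\xi_1^2,\xi') - \tilde b_{e/o}(x,r(x,\xi'),\xi')$ is divisible by $p$, and so
\begin{equation*}
b = B_e(x,\xi') + \xi_1 B_o(x,\xi') + (G + \xi_1 H)\,p,
\end{equation*}
with $B_{e/o}(x,\xi') := b_{e/o}|_{\xi_1^2 = r(x,\xi')}$ and $G, H \in \CR^\infty(T^*M)$. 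Because $\mu$ and $\mu^j$ are both supported on $\{p=0\}$ (Lemmas~\ref{l:suppBoundary} and~\ref{lemma:CS}) and $H_p p = 0$, the $(G + \xi_1 H)p$ correction contributes nothing to $\mu(H_p b)$ or to $\mu^j(b)$. Applying the first identity of this subsection, i.e.\ the boundary commutator lemma for $a$ affine in $\xi_1$, with $a_0 = B_e$ and $a_1 = B_o$ thus yields
\begin{equation*}
\mu(H_p b) = 2\Im\mu^j(b) - \dot\nu(B_o|_{x_1=0}) = \mu_1(b) - \dot\nu(B_o|_{x_1=0}).
\end{equation*}

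It remains to split $-\dot\nu(B_o|_{x_1=0})$ along the partition $T^*\partial M = \E \sqcup \hyp \sqcup \gl$. On the microlocally elliptic region, a standard semiclassical parametrix argument applied to $Pu = h\chi f$ with Dirichlet data forces $hD_{x_1} u|_{x_1=0}$ to be microlocally $O(h^\infty)$, and so the corresponding restriction of $\dot\nu$ vanishes. Set $u^0 := -\dot\nu|_\hyp$; this is supported on $\hyp$ by definition, and since $B_o|_{x_1=0} = b_o|_{x_1=0,\ \xi_1^2=r(0,x',\xi')}$, it produces exactly the third term of the claim. For the glancing part, Lemma~\ref{l:glancing2} expresses $\dot\nu|_\gl$ as an explicit multiple of $(\partial_{x_1} r)\,\mu^\partial$, while Lemma~\ref{l:glancing1} identifies $1_{x_1=0}\mu$ with $\delta(x_1)\otimes\delta(H_p x_1)\otimes\mu^\partial$. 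Evaluating $\mu(1_{x_1=0}\,\partial_{x_1} r\,b_o|_{\xi_1^2=r})$ using $H_p x_1 = 2\xi_1$, $\delta(H_p x_1) = \tfrac12\delta(\xi_1)$, and $\supp\mu^\partial\subset\gl = \{r=0\}$, collapses the integrand to a multiple of $\partial_{x_1}r(0,x',\xi')\,\tilde b_o(0,x',0,\xi')$ paired against $\mu^\partial$; after the $\tfrac12$ factor combines with the constant in Lemma~\ref{l:glancing2}, this is precisely $-\dot\nu|_\gl(B_o|_{x_1=0})$, completing the identification of the middle term.

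The principal technical obstacle is the microlocal elliptic-regularity step: arguing that the defect measure $\dot\nu$ of the Neumann trace vanishes on the semiclassically elliptic part of $T^*\partial M$. Although morally standard, this requires a parametrix construction near $\partial M$ that is compatible with the Dirichlet boundary condition and passes cleanly from a microlocal interior estimate on $u$ to one on the trace $hD_{x_1} u|_{x_1=0}$. A secondary bookkeeping concern is aligning the $\delta$-function normalizations furnished by Lemma~\ref{l:glancing1} with the constant of proportionality in Lemma~\ref{l:glancing2} so as to reproduce exactly the coefficient $\partial_{x_1} r$, with no stray factors of $2$, in the middle term of the stated identity.
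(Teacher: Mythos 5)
Your proposal takes a genuinely different route, and the route has a gap that matters for how the lemma is used. Lemma~\ref{lemma:Hpbdecomp} sits inside the proof that condition~\eqref{cond2} implies condition~\eqref{cond1} in Lemma~\ref{l:equiv}: the only hypotheses available are that $\mu,\mu_1$ are measures supported on $S^*\!M$ with $\mu_1\ll\mu$, that $\mu(H_pa)=\mu_1(a)$ for symbols $a=a(x,x_1\xi_1,\xi')$, and that $\mu(\gl_d\cup\mc H)=0$. Your argument instead specializes to $\mu_1=2\Im\mu^j$ and leans on the boundary-commutator identity $\mu(H_pa)=2\Im\mu^j(a)-\dot\nu(a_1|_{x_1=0})$ together with Lemmas~\ref{l:glancing1} and~\ref{l:glancing2}, i.e.\ on the existence of the Neumann-trace measure $\dot\nu$ of the underlying solution $u$. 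But in the layer-stripping argument the decomposition of Lemma~\ref{lemma:Hpbdecomp} is invoked again for the restricted measures $1_{\tgl}\mu,\ 1_{\tgl}\mu_1$ (and later $1_{\tgl^{\geq M}}\mu$), which satisfy~\eqref{e:diff3} but are not defect measures of any solution and carry no $\dot\nu$. The paper's proof is purely measure-theoretic precisely so that it survives these restrictions: after reducing to $\xi_1a_o$ on $S^*\!M$, it inserts cutoffs $\chi_\e(x_1)$, extracts the term $\mu(1_{x_1=0}\partial_{x_1}r\,a_o)$ from $\mu(\chi_\e a_oH_p\xi_1)$ by dominated convergence, and \emph{defines} $u^0$ as the weak limit of $a_o\mapsto\mu(2\e^{-1}\chi'_\e\xi_1^2a_o)$ (a limit of one-signed functionals, hence a measure, given by the explicit formula~\eqref{e:u0def}); its support in $\mc H$ is then obtained by a second cutoff in $\xi_1$ and the splitting $u^0_1,\dots,u^0_4$. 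None of this presupposes knowledge of what $u^0$ ``is''.

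Two further points. First, your identification of the glancing contribution requires $\dot\nu|_{\E}=0$, which you assert via an unproven boundary parametrix argument compatible with the Dirichlet condition; this is true but is an extra analytic input the paper never needs, and you yourself flag it as the principal obstacle. Second, your description of the relationship between the terms is essentially right a posteriori for the original defect measures: combining Lemma~\ref{lemma:Hpbdecomp} with Lemma~\ref{l:diff} and Lemmas~\ref{l:glancing1}--\ref{l:glancing2} does show $u^0=-\dot\nu 1_{\hyp}$ and identifies the middle term with $-\dot\nu 1_{\gl}(a_o)$ (the factors of $2$ from $\delta(H_px_1)=\tfrac12\delta(\xi_1)$ and from $-H_p^2x_1\,\mu^\partial=4\dot\nu 1_{\gl}$ do cancel correctly). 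But this is a consequence of the lemma, not a proof of it in the generality in which it is stated and used. To repair your argument you would either have to prove the lemma abstractly, as the paper does, or reorganize the entire layer-stripping scheme so that it never applies the decomposition to restricted measures.
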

\begin{proof}
Write 
$$b=b_e+\xi_1b_o$$
where 
$$b_e=\frac{b(x,\xi_1,\xi')+b(x,-\xi_1,\xi')}{2}.$$
Then, $b_e=\tilde{b}_e(x,\xi_1^2,\xi')$, $b_0=\tilde{b}_o(x,\xi_1^2,\xi')$ and so on $S^*\!M$, 
$$
b_e=\tilde{b}_e(x,r(x,\xi'),\xi')=:a_e(x,\xi'),\qquad b_o=\tilde{b}_o(x,r(x,\xi'),\xi')=:a_o(x,\xi').
$$
So, since $H_p$ is tangent to $S^*\!M$, 
$$\mu(H_pb)=\mu(H_p(a_e+\xi_1a_o))=\mu(H_p\xi_1a_o)+\mu_1(a_e).$$
Now, let
\begin{equation}\label{eq:chi}
  \begin{aligned}
&\chi\in \CR_c^{\infty}(\re),\ \chi\equiv 1 \text{ near } 0,\ \chi'\leq 0
    \text{ on } x_1\geq 0,\\
    &\chi_\e(x_1)=\chi(\e^{-1}x_1).
\end{aligned}
    \end{equation}

Then, 
$$\mu(H_p(1-\chi_\e )\xi_1a_o))=\mu_1((1-\chi_\e) \xi_1a_o)\to \mu_1(1_{x_1>0}\xi_1a_o).$$
Now, 
$$\mu(H_p(\chi_\e\xi_1a_o))=\mu(\chi_\e\xi_1H_pa_o+\chi_\e \partial_{x_1}ra_o+2\e^{-1}\chi'_\e\xi_1^2a_o)=:I_\e+II_\e+III_\e.$$
By the dominated convergence theorem,
\begin{equation*}
I_\e \to \mu(1_{x_1=0}\xi_1H_pa_o),\qquad  II_\e \to \mu(1_{x_1=0}\partial_{x_1}ra_o).
\end{equation*}
Note that since $\mu(\mc{H})=0$ and $\xi_1=0$ on $\{x_1=0\}\setminus \mc{H}$, we have 
\begin{equation}
\label{e:part1}
I_\e \to 0,\qquad  II_\e \to \mu(1_{x_1=0}\partial_{x_1}ra_o).
\end{equation}

Now, observe that 
$$\mu_1((1-\chi_\e) \xi_1a_o)=\mu(H_p(1-\chi_\e)\xi_1 a_o)=\mu((1-\chi_\e)H_p\xi_1a_o)-III_\e.$$
So, 
$$III_\e=\mu((1-\chi_\e)H_p\xi_1a_o)-\mu_1((1-\chi_\e) \xi_1a_o)$$
and in particular, 
\begin{equation}
\label{e:part2} 
\lim_{\e \to 0}III_\e(a_o)=\mu(1_{x_1>0}H_p\xi_1a_o)-\mu_1(1_{x_1>0}\xi_1a_o).
\end{equation}
On the other hand for $a_o\geq 0$, 
$$III_\e(a_o)\leq 0.$$
Therefore, $III_\e$ is a family of measures with a weak limit. In particular, $III_\e\to u^0$ for some measure $u_0$ supported in $x_1=0$. In fact, this also shows that 
\begin{equation}
\label{e:u0def}
a_o\mapsto \mu(1_{x_1>0}H_p(\xi_1a_o))-\mu_1(\xi_1a_o)=:u^0(a_o).
\end{equation}
is a measure. We now check that $u^0$ is supported in $\mc{H}$. Note that once we show this the proof will be complete, since by~\eqref{e:part1} and~\eqref{e:part2} together with $\mu_1(\mc{H})=0$, we then have 
\begin{align*}
\mu(H_pb)&=\mu_1(a_e)+\mu_1(\xi_1a_o)+\mu(1_{x_1=0}\partial_{x_1}ra_o)+u^0(a_o)\\
&=\mu_1(b)+\mu(1_{x_1=0}\partial_{x_1}ra_o)+u^0(a_o)
\end{align*}
as desired.

Let $\chi,$ $\chi_\e$ be as in \eqref{eq:chi}. Note that for each $\e>0$,
$a_o\chi_\e(x_1)\chi_{\sqrt{\e}}(\xi_1)$ can be written as a smooth
function of $(x_1,x',x_1\xi_1,\xi')$. Using~\eqref{e:u0def}, we now decompose
$$
u^0(a_o\chi_\e(x_1)\chi_{\sqrt{\e}}(\xi_1))=u^0_1(\e)+u^0_2(\e)+u^0_3(\e)+u^0_4(\e)
$$
where
\begin{align*}
u^0_1(\e)&=\mu(1_{x_1>0}\chi_\e(x_1)\chi_{\sqrt{\e}}(\xi_1)\xi_1 H_pa_o),\\
  u^0_2(\e)&=\mu(1_{x_1>0}\e^{-1}\chi_\e'(x_1)\chi_{\sqrt{\e}}(\xi_1)2\xi_1^2a_o),\\
  u^0_3(\e)&=\mu(1_{x_1>0}\chi_\e(x_1)(\xi_1\e^{-\frac{1}{2}}
            \chi'_{\sqrt{\e}}(\xi_1)+ \chi_{\sqrt{\e}}(\xi_1))\partial_{x_1}ra_o),\\
  u^0_4(\e)&=-\mu_1(\xi_1a_o\chi_\e(x_1)\chi_{\sqrt{\e}}(\xi_1)).
\end{align*}
Clearly $u^0_1(\e) \to 0$ by dominated convergence.
The same is true of $u^0_2(\e):$ the function
$\chi_\e'(x_1)\chi_{\sqrt{\e}}(\xi_1)2\xi_1^2a_o$ is uniformly
bounded as $\e \downarrow 0$ since $\xi_1^2 \leq C \ep$ on $\supp \chi_{\sqrt{\e}}(\xi_1).$
Next,
$$u^0_3(\e) \to 0$$
as $ \xi_1\e^{-\frac{1}{2}} \chi'_{\sqrt{\e}}(\xi_1)+ \chi_{\sqrt{\e}}(\xi_1)$ likewise
remains uniformly bounded.
Finally, 
$$u^0_4(\e) \to -\mu_1(1_{x_1=0}1_{\xi_1=0}\xi_1a_o)=0.$$ 
In particular, this implies by the dominated convergence theorem that
$$u^0(a_o1_{x_1=\xi_1=0})=0$$
and so $u^0$ is supported by $\mc{H}$.
\end{proof}

\medskip

\noindent {\bf{Invariance away from glancing}}\par
We now prove the integral invariance statement \eqref{cond1} away from glancing.

\begin{lemma}
\label{l:hypInvariant}
Suppose
$\tilde{a}=\tilde{a}(x,\sigma,\xi')\in \CR_c^{\infty}$ and let
$a=\tilde{a}(x,x_1\xi_1,\xi')$. Furthermore, assume that for
$\rho \in \supp a$, on $[-T,0]$ $\varphi_{-t}(\rho)$ intersects
$x_1=0$ at most once and in a hyperbolic point. Then,
$$
\mu(a)-\mu(a\circ \varphi_{-T})=\int_{-T}^0\mu_1(a\circ\varphi_{-s})ds.
$$
\end{lemma}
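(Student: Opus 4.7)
The plan is to integrate the infinitesimal transport identity $\mu(H_p b) = \mu_1(b)$ along the generalized bicharacteristic flow. This identity is the clean form of Lemma~\ref{lemma:Hpbdecomp} valid whenever $b$ is pulled back from $\Tbstar M$: writing $b = \tilde b(x, x_1\xi_1, \xi')$ with $\tilde b \in \CR_c^\infty$, a direct computation (Taylor expansion of $\tilde b$ in $x_1\xi_1$ at $\xi_1 = 0$) shows $b_o|_{x_1 = 0} \equiv 0$, so both boundary contributions in Lemma~\ref{lemma:Hpbdecomp} vanish.

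\emph{Stage 1: purely interior trajectories.} Suppose $\varphi_{-t}(\rho) \in \{x_1 > 0\}$ for every $\rho \in \supp a$ and every $t \in [0,T]$. Then $\supp(a \circ \varphi_{-t}) \Subset \{x_1 > 0\}$ for all such $t$, so $(a\circ\varphi_{-t})_o|_{x_1=0} = 0$ trivially, and Lemma~\ref{lemma:Hpbdecomp} gives $\mu\bigl(H_p(a\circ\varphi_{-t})\bigr) = \mu_1(a\circ\varphi_{-t})$. Combining with the interior chain rule
$$\frac{d}{dt}(a\circ\varphi_{-t}) = -(H_p a)\circ\varphi_{-t} = -H_p(a\circ\varphi_{-t}),$$
one obtains $\frac{d}{dt}\mu(a\circ\varphi_{-t}) = -\mu_1(a\circ\varphi_{-t})$; integrating over $[0,T]$ yields the stated identity.

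\emph{Stage 2: one hyperbolic crossing.} Using a partition of unity adapted to the (open) cover of $\supp a$ by trajectories that do not meet $\pa M$ on $[-T,0]$ and trajectories that do so exactly once transversally at a hyperbolic point, the remaining case is when $\supp a$ is contained in a small neighborhood of a single hyperbolic crossing. Since hyperbolic crossings satisfy $H_p x_1 \neq 0$, the crossing time $t_1=t_1(\rho)$ is a smooth function of $\rho \in \supp a$. For small $\delta>0$, I would split each trajectory into the three pieces $[0, t_1(\rho)-\delta]$, $[t_1(\rho)-\delta, t_1(\rho)+\delta]$, and $[t_1(\rho)+\delta, T]$. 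The two outer pieces lie in the strict interior, where Stage~1 (applied to a smooth cutoff of $a$ realizing this parameter-dependent splitting) gives the required transport identity on the respective intervals. For the middle piece, the integral $\int_{t_1(\rho)-\delta}^{t_1(\rho)+\delta} \mu_1(a\circ\varphi_{-s})\,ds$ is $O(\delta)$ uniformly, while the boundary term $\mu(a\circ\varphi_{-t})|_{t=t_1\pm\delta}$ is continuous as $\delta \to 0$: indeed $a = \pi^*\tilde a$ descends to a continuous function on $\Tbstar M$, the hyperbolic reflection law $\xi_1\mapsto -\xi_1$ at $x_1=0$ preserves the value $\tilde a(x',0,\xi')$, and Lemma~\ref{l:glancing1} ensures $\mu(\mc H)=0$, so no mass is gained or lost at the boundary. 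Sending $\delta \to 0$ closes the identity.

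The main obstacle is the second stage: one must organize the splitting at the crossing time, which depends nontrivially on $\rho$, and verify the continuity of $t \mapsto \mu(a\circ\varphi_{-t})$ across a hyperbolic reflection. The crucial structural point---exploited repeatedly in the Burq--Lebeau framework---is that a function pulled back from $\Tbstar M$ is insensitive to the sign reversal of $\xi_1$ at $x_1 = 0$, so reflected trajectories carry it continuously through the boundary, and the finite Radon measure $\mu$ does not see the hyperbolic set where the reflection occurs.
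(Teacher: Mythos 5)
Your two structural observations are exactly the right ones, and they are the same ones the paper relies on: a test function pulled back from $\Tbstar M$ has vanishing $\xi_1$-odd part at $x_1=0$, so the decomposition of Lemma~\ref{lemma:Hpbdecomp} collapses to the clean identity $\mu(H_pb)=\mu_1(b)$ with no boundary or $u^0$ contributions; and such a function takes equal values on the incoming and outgoing halves of a hyperbolic reflection, so nothing is lost at the crossing. However, the paper organizes the argument in the opposite order from yours, and this reordering is precisely what dissolves the difficulty you flag as ``the main obstacle.'' Rather than applying the transport identity at each time slice and integrating in $t$, the paper first integrates the pointwise fundamental theorem of calculus along each trajectory: splitting $\int_{-T}^{0}H_p(a\circ\varphi_s)(\rho)\,ds$ at the $\rho$-dependent crossing time $-T(\rho)$ is completely harmless pointwise, and the evenness of $a$ under $\xi_1\mapsto-\xi_1$ at $x_1=0$ makes the two one-sided boundary values cancel, yielding $a(\rho)-a\circ\varphi_{-T}(\rho)$ for every $\rho$. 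Only then is the measure identity of Lemma~\ref{lemma:Hpbdecomp} applied \emph{once}, to the single time-averaged test function $\int_{-T}^{0}a\circ\varphi_s\,ds$, whose restriction to $\mathcal{H}$ is even in $\xi_1$ (incoming and outgoing hyperbolic points flow to the same interior points), so the $u^0$ and boundary terms vanish. No parameter-dependent cutoffs, no $\delta$-limits.

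As written, your Stage 2 has a genuine gap. The outer pieces of your splitting are the time intervals $[0,t_1(\rho)-\delta]$ and $[t_1(\rho)+\delta,T]$, which vary with $\rho$; ``Stage 1 applied to a smooth cutoff of $a$ realizing this parameter-dependent splitting'' is not an operation that makes sense on its face, since cutting off $a$ in phase space does not implement a $\rho$-dependent truncation in flow time. To repair it along your lines you would need either a partition of unity of $\supp a$ into pieces on which the crossing time varies by less than $\delta$ (so that fixed time intervals suffice, at the cost of bookkeeping the overlaps), or a direct argument that $F(t)=\mu(a\circ\varphi_{-t})$ is Lipschitz with $F'(t)=-\mu_1(a\circ\varphi_{-t})$ for a.e.\ $t$. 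Either route also has to confront the fact that $a\circ\varphi_{-t}$ is merely continuous (not smooth) across the set of $\rho$ whose trajectory is reflecting at time $-t$, so the identity $\mu(H_pb)=\mu_1(b)$, proved for smooth $b$, requires an approximation step before it can be invoked. None of this is fatal, but none of it is in your write-up, and the paper's trajectory-first integration avoids all of it. I would recommend restructuring along the paper's lines: prove the pointwise identity $\int_{-T}^{0}H_p(a\circ\varphi_s)\,ds=a-a\circ\varphi_{-T}$ first, then test the time average against Lemma~\ref{lemma:Hpbdecomp} and kill the correction terms by evenness over $\mathcal{H}$.
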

\begin{proof}
Let $-T(\rho)$ be the time that $\varphi_{-T(\rho)}(\rho)\in \mathcal{H}$. Then
\begin{align*}
\int_{-T}^0H_p(a\circ\varphi_{s})(\rho)ds&=\int_{-T}^{-T(\rho)-0}H_p(a\circ \varphi_s)ds+\int_{-T(\rho)+0}^{0}H_p(a\circ\varphi_{s})ds\\
&=a\circ\varphi_{-T(\rho)-0}(\rho)-a\circ\varphi_{-T}(\rho)+a-a\circ\varphi_{-T(\rho)+0}(\rho)
\end{align*}
Now, since at $x_1=0$, $a(0,x',\xi_1,\xi')=a(0,x',-\xi_1,\xi')$, $a\circ\varphi_{-T(\rho)-0}=a\circ\varphi_{-T(\rho)+0}$ and we have 
$$\int_{-T}^0H_p(a\circ\varphi_{s})(\rho)ds=a(\rho)-a\circ\varphi_{-T}(\rho).$$

Now,  by Lemma~\ref{lemma:Hpbdecomp}
\begin{align*}
\int \int_{-T}^0H_p(a\circ \varphi_s)(\rho)dsd\mu(\rho)&=(H_p^*\mu)\Big(\int_{-T}^0a\circ \varphi_{s}ds\Big)\\
&=\mu_1\Big(\int_{-T}^0a\circ\varphi_{s}ds\Big)+u^0\Big[\Big(\int_{-T}^0 a\circ\varphi_sds|_{\mathcal{H}}\Big)_o\Big].
\end{align*}

Note that
$$ \rho\mapsto \int_{-T}^0 a\circ\varphi_sds|_{\mathcal{H}}(\rho)$$
is even under $\xi_1\mapsto -\xi_1,$ since points $(x=0,
y,\pm\xi,\eta)$ over the hyperbolic set are
both mapped to the same point in $T^*\!M^\circ$ under the short-time flow; thus the $u^0$ term vanishes. 
Therefore, 
$$\int_{-T}^0\int a\circ\varphi_{-s}(\rho)d \mu_1ds= \mu_1 \Big(\int_{-T}^0a\circ\varphi_{s}ds\Big)=\int a-a\circ\varphi_{-T}d\mu.$$
\end{proof}

\noindent {\bf{Invariance on the glancing set}}\par

We now turn to analysis at the glancing set.  Our strategy will be to
prove \eqref{cond1} for all $t<t_0$ fixed and small.  To do this, we
may break up the support of $q$ by a partition of unity and
work locally, assuming that $q$ is replaced by a function $a$ supported in a
neighborhood $U$ of a point $\rho\in \gl$; we will show that for each
such $U,$ if $t_0$ is taken to be small, \eqref{cond1} holds with $q$
replaced by $a.$

We will do this using a `layer stripping' argument. First, we `strip away' the hyperbolic layer using the invariance established in
Lemma~\ref{l:hypInvariant}. 
In particular, recall that $\tgl$ denotes the set
of generalized bicharacteristics in $U$ which encounter $\gl$. Then we
have already seen that~\eqref{cond1} holds on $\tgl^c$. Since
$\tgl^c$ is open, $\pi_*1_{\tgl^c} \mu $
satisfies
\begin{equation}
\label{e:inv2}
\pi_*1_{\tgl^c}\mu(q\circ\varphi_t)-\pi_*1_{\tgl^c}\mu(q)=\int_0^t\pi_*2\Im 1_{\tgl^c} \mu_1(q\circ\varphi_s)ds,\qquad q\in \CR_c^{\infty}(\Tbstar M). 
\end{equation}
In particular, since
\eqref{cond1} implies \eqref{cond2}, $1_{\tgl^c}\mu$
satisfies
\begin{equation}
\label{e:diff2}
1_{\tgl^c} \mu(H_pa)=1_{\tgl^c}\mu_1(a).
\end{equation}
 Subtracting
this from~\eqref{e:diff} for $\mu$, $\mu1_{\tgl}$
satisfies
\begin{equation}
\label{e:diff3}
\mu 1_{\tgl} (H_pa)=1_{\tgl}\mu_1 (a).
\end{equation} 
Thus we may turn our attention to studying the new measures $\mu
1_{\tgl},$ $1_{\tgl}\mu_1.$  By a slight abuse of notation, then
\emph{we assume henceforth that $\mu$ and $\mu_1$ are supported on $\tgl$.}

As we proceed through the proof, we will `strip away' the higher order
tangent layers. We start by showing that at points where the
bicharacteristics of $H_p$ are tangent to exactly order 2, the measure
is invariant in the sense of~\eqref{cond1}. Once we have shown this,
we can argue as we did to remove the hyperbolic set and replace the
measure $\mu$ by $\mu1_{\tgl^{\geq 3}}.$  We then prove invariance on $\tgl^k$ by induction.  As before,
the fact that~\eqref{cond1} implies~\eqref{cond2} allows us to assume
that $\mu$ is supported on $\tgl^{\geq k}$. Then, since the flow is
transverse to $\gl^k$ we may work with a measure supported on
$\tgl^k$ when showing invariance at order $k$. We will obtain this
invariance by using the transversality of the flow to $\gl^k$
to show that there is no jump in the measure across these points. In
particular, this argument will show that $\mu$ accumulates no mass across
$\gl^k$ and thus that the invariance continues to hold.

\medskip

\medskip

\noindent {\bf{The case of $\gl_d$}}\par
We choose the neighborhood of $U$ of $\rho \in \tgl_d$ so small that $U\cap (\mc{H}\cup \gl\setminus \gl_d)=\emptyset.$
 Next, suppose that $a\in \CR_c^{\infty}(U)$. Recall that
$$
\mu(H_pa)=\mu_1(a)+\mu(1_{x_1=0}\partial_{x_1}ra_o|_{\xi_1^2=r(x,\xi')})+u^0(a_o),
$$
with $u^0$ supported in $\mc{H}\cap\supp \mu$ and $\mu(\gl_d)=0$. Since $\supp \mu\subset \tgl$ and $\supp a\subset U$, we have 
$$
\mu(H_pa)=\mu_1(a).
$$
and hence~\eqref{e:inv} holds since the generalized bicharacteristics through $\gl_d$ are bicharacteristics of $H_p$. 

\medskip

\noindent{\bf{The case of $\gl_g$}}

Let $\tgl_g$ be the set of generalized bicharacteristics encountering
$\gl_g$ in $U$. Recall that we may assume
$\mu$ and $\mu_1$ are supported on $\tgl_g;$ may further choose $U$ small
enough so that $U \cap \tgl_g\subset \gl_g,$ hence we may in fact
assume at this stage that $\mu$ and $\mu_1$ are supported by $\gl_g.$

Let $a\in \CR_c^{\infty}(U)$. For $q=q(x_1,x',x_1\xi_1,\xi')$, $1_\gl H_p^Gq=1_\gl H_pq$. Moreover, since $H_p^Gx_1=H_p^G\xi_1=0$, letting $a_0=a|_{\xi_1=0}$,
\begin{equation}\label{glancingderivative}
1_\gl H_p^Ga=1_\gl H_p^Ga_0=1_\gl H_pa_0.
\end{equation}
Now, since on $U$, $\supp \mu\subset \gl_g$
\begin{equation}
\label{e:glide2}
\mu(H_p^Ga)=\mu(1_{\gl_g}H_p^Ga)=\mu(1_{\gl_g}H_p^Ga_0)=\mu(H_pa_0)=\mu_1(a_0)=\mu_1(1_{\gl_g}a_0)=\mu_1(a)
\end{equation}
Since $H_p^G$ generates the bicharacteristic flow in $\gl_g$, this implies~\eqref{e:inv}. 

\begin{remark}
  While it may seem at first that the argument used for $\gl_g$
  applies directly to higher order tangencies, we observe that it does
  not. In fact, the equality $\mu(H_pq)=\mu_1(q)$ for
  $q=q(x,x_1\xi_1,\xi')$ a priori only implies that $\mu$ mass can
  either `stick' to the boundary as in $\gl_g$ or instantly detach
  from the boundary as in $\gl_d$. It does not rule out either
  case. In $\gl_g$, the fact that trajectories which detach from the
  boundary instantly leave $\overline{M},$ while in $\gl_d$, the fact
  that $\mu(\gl_d)=0$, rule out detaching from and sticking to the
  boundary at $\gl_g$ and $\gl_d$ respectively. In order to determine
  whether mass at a point of higher order tangency sticks or detaches,
  we will use the transversality of the flow to such points to show
  that mass can neither accumulate nor dissipate.
\end{remark}

\medskip

\noindent{\bf{Higher order tangencies}}

Recall that 
$$
\gl^k:=\{ \rho \in \gl\mid (H_p^j x_1)(\rho)=0,\text{ for }0\leq j<k\text{ and }(H_p^kx_1)(\rho)\neq 0\}.
$$
We have shown that~\eqref{e:diffBoundary} holds for $a\in \CR_c^{\infty}(T^*\!M)$ with $\supp a\cap \gl^k=\emptyset$ for $k\geq 3$. We now proceed by induction on $k$. 

Let $r_0(x',\xi')=r(0,x',\xi')$ and
$r_1(x,',\xi')=\partial_{x_1}r(0,x',\xi')$. Then note that by
\cite[Lemma 24.3.1]{Hormander:v3},
$$
\gl^k=\{ x_1=0=1-r(0,x',\xi'),\, H_{r_0}^jr_1=0,\,0\leq j<k-2,\,H_{r_0}^{k-2}r_1\neq 0\}.
$$

For each $k,$ let $\tgl^k$
denote the set of generalized
bicharacteristics encountering $\gl^k$ in a small fixed neighborhood
of $\gl^k.$

Suppose~\eqref{e:inv} holds for $q\in \CR_c^{\infty}(T^*\!M)$ with
$\supp q\cap \tgl^k=\emptyset$ for $k\geq M-1$. We show
that~\eqref{e:inv} holds for $a\in \CR_c^{\infty}(T^*\!M)$ with
$a\cap \tgl^k=\emptyset$ for $k\geq M$.

Let $\rho \in \gl^{M}$ and $U$ a neighborhood of $\rho$ so that
$U\cap \gl^k=\emptyset$ for $k>M$. As before, we may assume $\mu$ and
$\mu_1$ are supported on $\tgl^{\geq M}$;  thus by our choice of $U$,
we may in fact assume without loss of generality that $\mu$,
$\mu_1$ are supported on $\tgl^M.$ Let $(x_2,\rho)$ be
coordinates on $T^*(\partial M)$ so that $\partial_{x_2}=H_{r_0}.$ Shrinking $U$
if necessary, by the implicit function theorem there
exists $\Theta(\rho)$ such that on $U,$
$$
\tgl^M\cap \gl^M=\{(x,\xi)\in \tgl^M\mid x_2=\Theta(\rho)\}.
$$ 
Let 
\begin{gather*}
\tgl_0:=\tgl^M\cap \gl^M,\quad \tgl^+:=\{(x,\xi)\in \tgl^M\mid  x_2>\Theta(\rho)\},\\
\tgl^-:\{(x,\xi)\in \tgl^M \mid x_2<\Theta(\rho)\}.
\end{gather*}

We write
\begin{gather*}
\mu1_{\tgl^M}=\mu_-+\mu_++\Upsilon,\\
\mu_-=1_{\tgl^-}\mu,\qquad \mu_+=1_{\tgl^+}{\mu},\qquad \Upsilon=1_{\tgl_0}\mu.
\end{gather*}
Then, for $q=q(x_1,x',x_1\xi_1,\xi')$ supported in $U,$
\begin{equation}\label{jump}
    \Upsilon(H_pq)=-\mu_-(H_pq)-\mu_+(H_pq)+\mu_1(q).\end{equation}
    Let 
$$
\widetilde{H}(\rho)=\begin{cases}
H_p(\rho)& \rho\notin \gl_g\\
H_p^G(\rho)&\rho \in \gl_g;
\end{cases}
$$
this vector field generates the bicharacteristic flow on $\tgl^M,$ so
by our inductive
hypothesis, \begin{equation}\label{flowderivmu}\widetilde{H}^*
  \mu=\mu_1\end{equation} on $\tgl^M \backslash \gl^M.$  Moreover, since on $\tgl^0$ we have $H_p^M x_1 \neq 0,$
the flow within $U$ (perhaps after further shrinking that set) for
forward and backward time is either in $\{x_1>0\}$ or lies entirely in
$\gl_g$ (with separate cases for forward/backward flow depending on
the parity of $M$ and the sign of $H_p^M x_1$).  In either case, the
flow stays away from $\hyp,$ hence is generated by $\widetilde{H}$ on
$\tgl^M \cap U.$

Now as in Lemma~\ref{lemma:AC} and the surrounding
discussion, given any continuous $a$ defined on $\gl_0$ and $\psi\in
\CR_c^\infty(\RR),$ we extend $a$ to be constant on the flow, and set
$$
\mu^\sharp_a(\psi)=\mu(\psi(s) a(\rho));
$$
here we are using $\rho\in \gl_0$ together with $s\in \RR$ as local
(non-smooth) coordinates on $\tgl^M$ defined by the flow, mapping
$(-\delta,\delta)\times \tgl_0$ homeomorphically to $\tgl^M$ via
$(s,\rho) \mapsto \exp(t\widetilde{H})(\rho).$  Just as in
Lemma~\ref{lemma:AC}, the flow invariance established on $\tgl^M
\backslash \gl_0$ implies that $\mu_a^\sharp$ is AC
with respect to Lebesgue measure on $\RR\setminus 0;$ moreover the Radon--Nikodym
derivative is itself bounded by a multiple of $\sup \lvert a\rvert,$ hence is a
function of $s$ with values in measures on $\gl_0,$ denoted $G(s).$
Finally, let $\tr\dot{\mu}_\pm=G(0^\pm)$
denote the restrictions of these functions to $\gl_0$ from left and
right; these are well defined since the relation \eqref{flowderivmu} together with the arguments in Lemma~\ref{l:cts}
show that $G$ is continuous  on $s\leq 0$ and on $s\geq 0$ separately but that there may be a jump at $s=0$.

Moreover, denoting the function $\psi(s)a(\rho)$ by $\psi\otimes a$, we have for $0\notin\supp \psi$,
$\mu(\psi'\otimes a)=\mu_1(\psi\otimes a).$ So in particular, 
$$\int [G(a)](s)\psi'(s)ds=\mu_1(\psi\otimes a).$$

Now, let $\chi\in \CR_c^{\infty}(\RR)$ with $\chi \equiv 1$ on $[-1,1]$ and $\supp \chi \subset (-2,2)$ and $\chi_\e(s)=\chi(\e^{-1}s)$. Then, 
\begin{align*}
\mu((1-\chi_\e(s))\tilde{H}(\psi \otimes a))&=\mu(\tilde{H}[(1-\chi_\e)\psi]\otimes a)+\mu([\psi \otimes a]\tilde{H}\chi_\e)\\
&=\mu(\tilde{H}[(1-\chi_\e)\psi\otimes a])+\mu(\e^{-1}\psi\chi'(\e^{-1}\cdot) \otimes a)\\
&=\mu_1((1-\chi_\e)\psi \otimes a)+\mu(\e^{-1}\psi \chi'(\e^{-1}\cdot)\otimes a)
\end{align*}
Then, 
\begin{align*}
\mu(\e^{-1}\psi\chi'(\e^{-1}\cdot) \otimes
  a)&=\e^{-1}\int_{\RR\setminus 0} [G(a)](s)\chi'(\e^{-1}s )\psi(s)ds\\
&= \psi(0)(G_a(0^-)-G_a(0^+))-\int_{\RR\setminus 0} G_a(s)\chi_\e(s)\psi'(s)ds+\mu_1(\chi_\e\psi\otimes a)\\
&\to \psi(0)(G_a(0^-)-G_a(0^+))+\mu_1((\psi \otimes a)1_{\tgl^0})
\end{align*}

In particular, 
\begin{equation}
\label{e:jumpjump}
\begin{aligned}
\mu_+(\tilde{H}\psi\otimes a)+\mu_-(\tilde{H}\psi\otimes a)&= \psi(0)([G(a)](0^-)-[G(a)](0^+))+\mu_1(\psi \otimes a)\\
&=\tr\dot{\mu}_+(\psi \otimes a)-\tr\dot{\mu}_-(\psi \otimes a)+\mu_1(\psi \otimes a).
\end{aligned}
\end{equation}

Now we claim that we may also apply \eqref{jump} to the function
$q=\psi\otimes a.$  We begin by approximating $a$ with a smooth
function on $\gl_0.$  Since $\widetilde{H}$ locally generates the flow
and is at least Lipschitz across $\gl_0,$ we find that the flow on
$\tgl^M$ is at least $\CR^1.$ We note further that
$\tgl^M$ is without loss of generality disjoint from $\hyp$ (after
appropriately shrinking $U$) since
depending on the parity and sign of $H_p^M x_1$ along $\gl_0$ the
flow is either gliding or enters $x_1>0$ in each of $\tgl^\pm.$  In
particular, $\pi(\tgl^M)\subset \Tbstar M$ is $\CR^1$. Moreover, 
$$
(-\delta,\delta)\times \gl_0\ni(s,q)\mapsto \varphi_s(q)\in \pi(\tgl^M)
$$
are $\CR^1$ coordinates on $\tgl^M$.  Hence
$b=\pi^*\psi\otimes a$ is $\CR^1$ on $\pi(\tgl^M)$ and we may extend $b$
to $\tilde{b}$, a $\CR^1$ function on $\Tbstar M$. Equivalently, setting $\tilde{q}=\pi^*\tilde{b}$, we have
$\tilde{q}=\tilde{b}(x,x_1\xi_1,\xi')$ with $\tilde{b}$ a $\CR^1$
function of its arguments and $\tilde{q}|_{\tgl^M}=\psi\otimes a.$

Since $\tilde{b}$ is $\CR^1$, we may approximate it in $\CR^1$ by smooth
functions $\tilde{b}_\e$ and hence we obtain using the dominated
convergence theorem that~\eqref{jump} holds for $\tilde{q}$. Finally,
since $\tilde{q}|_{\tgl^M}=\psi\otimes a$ and
$\supp \mu\subset \tgl^M$, we obtain~\eqref{jump} for
$q=\psi\otimes a$. Moreover, since $\mu_{\pm}$ are each supported in the
interior or $\gl_g,$ we can replace $\tilde{H}$ by $H_p$
in~\eqref{e:jumpjump}.

Now \eqref{jump} and \eqref{e:jumpjump} yield:
\begin{equation}
\label{e:Ups}
\Upsilon(H_p\psi\otimes a)=-\tr(\dot{\mu}_+)(\psi\otimes a)+\tr(\dot{\mu}_-)(\psi \otimes a)
\end{equation}
and hence 
$$
|\Upsilon(H_p\psi\otimes  a)|\leq C|\sup \psi \otimes a|.
$$

Since $\Upsilon$ is supported at $\tgl^0$, and the collection of functions $\psi\otimes a$ is dense, this implies that for all $q$,
$$
|\Upsilon(H_pq)|\leq C|\sup q|.
$$
Then, using that $\Upsilon$ is a measure supported at $\tgl_0$ and $H_p$ is
transverse to $\tgl_0,$ this implies $\Upsilon=0$. Finally, inserting
$\Upsilon=0$ into~\eqref{e:Ups} yields
$$
\tr(\dot{\mu}_-)=\tr(\dot{\mu}_+).
$$
Moreover, since $\mu_1$ is absolutely continuous with respect to $\mu$, $\mu_11_{\tgl^0}=0$. 

Let $\psi_{t_0}(s)=\psi(s+t_0)$. Then $(\psi\otimes a)\circ\varphi_t=\psi_t\otimes a$ and
\begin{align*}
\mu(\psi_{t_0}\otimes a)-\mu(\psi\otimes a)&=\int_{0}^{t_0}\partial_t \int [G(a)](s)\psi(s+t)dsdt\\
&=\int_0^{t_0}\int_{-\infty}^0 [G(a)](s)\psi'(s+t)ds+\int_0^\infty [G(a)](s)\psi'(s+t)dt\\
&=\int_0^{t_0} \int_{-\infty}^\infty \mu_1(\psi_t\otimes a)ds+ \tr\dot{\mu}_-(a)\psi(t)-\tr\dot{\mu}_+(a)\psi(t)dt\\
&=\int_{0}^{t_0}\mu_1(\psi_t\otimes a)dt
\end{align*}
In particular, we have 
$$
\mu((\psi\otimes a)\circ\varphi_{t_0})-\mu(\psi \otimes a)=\int_{0}^{t_0}\mu_1( [\psi \otimes a]\circ\varphi_t)dt.
$$
Since the collection of functions of the form $\psi\otimes a$ is dense in $\CR^0(\Tbstar M)$, this completes the proof that $\mu$ is invariant on $\tgl^M$.

As there are no infinite order tangencies, $\gl=\bigcup_M\gl^M$ and this completes the proof of~\eqref{e:inv}.\qed

Finally, to complete the proof of~\eqref{upperbound} we use Proposition~\ref{p:volterra} and the following lemma.
\begin{lemma}
Suppose that $\mu$ satisfies~\eqref{e:diffBoundary}. Let $\psi \in
\CR_c^1(\re)$, $\Sigma$ be a hypersurface transverse to the generalized
bicharacteristic flow so that $\varphi_t:\re\times \Sigma\to \Tbstar M$
is a homeomorphism onto its image, and\ $a\in \CR_c^0(\Sigma)$. Define
$f_{\psi,a}(\varphi_t(\rho))=\psi(t)a(\rho).$   Then
$$
\pi_*\mu(f_{\partial_t\psi,a})=2\Im \mu^j(f_{\psi,a}).
$$
\end{lemma}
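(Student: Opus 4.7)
The strategy is to differentiate the integral invariance \eqref{e:diffBoundary} in $t$ at $t=0$ after applying it to the test function $q=f_{\psi,a}$. The key preliminary observation is that in the flow-adapted coordinates provided by the homeomorphism $F:(t,\rho)\mapsto\varphi_t(\rho)$, the flow is simply translation in $t$, so on the image of $F$ we have $f_{\psi,a}\circ\varphi_s = f_{\psi(\cdot+s),a}$. Once this is in hand, everything reduces to taking a careful limit.

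My first step is to extend \eqref{e:diffBoundary} from $q\in \CR_c^\infty(\Tbstar M)$ to $q=f_{\psi,a}$. Note that $f_{\psi,a}$ is a continuous function of compact support in the image of $F$, which we extend by zero to a compactly supported continuous function on $\Tbstar M$. Since $\pi_*\mu$ and $\pi_*\mu^j$ are Radon measures that are finite on compact sets (the support of $f_{\psi,a}$ is compact in $\Tbstar M$), the two sides of \eqref{e:diffBoundary} depend continuously on $q$ in the sup-norm with uniformly bounded supports. Approximating $f_{\psi,a}$ by $q_n\in \CR_c^\infty(\Tbstar M)$ with $q_n\to f_{\psi,a}$ uniformly and with uniformly bounded supports, and using the Cauchy--Schwarz bound of Lemma~\ref{lemma:CS} to control the integrand uniformly in $s$, I would pass to the limit and conclude
\[
\pi_*\mu(f_{\psi(\cdot+t),a}) - \pi_*\mu(f_{\psi,a}) = \int_0^t 2\Im \pi_*\mu^j(f_{\psi(\cdot+s),a})\,ds.
\]

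Next, I divide by $t$ and send $t\to 0$. For the left-hand side, since $\psi\in\CR_c^1(\re)$, the difference quotients $(\psi(\cdot+t)-\psi)/t$ converge uniformly to $\psi'=\partial_t\psi$; hence $(f_{\psi(\cdot+t),a}-f_{\psi,a})/t\to f_{\partial_t\psi,a}$ uniformly on $\Tbstar M$ with uniformly bounded support, so the LHS tends to $\pi_*\mu(f_{\partial_t\psi,a})$. For the right-hand side, uniform continuity of $\psi$ implies that $s\mapsto f_{\psi(\cdot+s),a}$ is norm-continuous in $\CR^0$, hence $s\mapsto \Im\pi_*\mu^j(f_{\psi(\cdot+s),a})$ is continuous at $s=0$, so its time average over $[0,t]$ converges to $2\Im\pi_*\mu^j(f_{\psi,a})$. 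Combining the two yields the claimed identity.

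The main (mild) technical point is the first step: promoting \eqref{e:diffBoundary} from $\CR_c^\infty(\Tbstar M)$ to continuous functions, which is needed because the flow-based coordinates $(t,\rho)$ on the image of $F$ are only topological, not smooth, so $f_{\psi,a}$ itself need not be smooth on $\Tbstar M$ even for smooth $\psi$ and $a$. This is handled by a routine mollification combined with the Radon-measure finiteness just noted; no further propagation analysis is required, since all the bicharacteristic-flow geometry has already been absorbed into the invariance \eqref{e:diffBoundary}.
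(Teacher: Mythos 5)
Your proof is correct and follows essentially the same route as the paper's: differentiate the integral invariance \eqref{e:diffBoundary} at $t=0$ and handle the non-smoothness of $f_{\psi,a}$ by approximation with smooth test functions. The only (harmless) difference is the order of operations — you approximate first and then exploit that the flow acts by translation in the $(t,\rho)$ coordinates, so the difference quotients converge uniformly, which neatly sidesteps the paper's appeal to dominated convergence and $\pi_*\mu(\mc{H})=0$ when moving the derivative inside the measure.
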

\begin{proof}
We have for $q\in \CR_c^{\infty}(\Tbstar M)$, 
$$
\partial_t\pi_*\mu(q\circ\varphi_t)|_{t=0}=2\Im \mu^j(q).
$$
Therefore, it is enough to show that we can move the derivative inside $\pi_*\mu$. This follows from the dominated convergence theorem since $\pi_*\mu(\mc{H})=0$. 
Now approximation of $f_{\psi,a}$ by smooth functions gives the result. 
\end{proof}

In order to obtain the required uniformity, we adjust the set $\mc{C}$ by fixing neighborhood $U$ of $\Omega$ in which we assume that for $\{g_k\}_{k=1}^\infty\subset \mc{C}$, we not only have $\|g-g_{k_m}\|_{\CR^{1,\gamma}}\to 0$ for some $\gamma>0$ but also, $\|g-g_{k_m}\|_{\CR^{\infty}(U)}\to 0$. Arguing as in Section~\ref{sec:uniform} then completes the proof ot Theorem~\ref{thm:2} when $\Omega\neq \emptyset$.

\section{Lower bounds on manifolds without boundary}

The idea behind our proof of the lower bounds in Theorem~\ref{theorem:main} is that near any segment of a geodesic $\gamma$, which is not trapped, $P$ is \emph{globally} microlocally equivalent to $hD_{x_1}$ and hence it is enough to construct examples which saturate the $L^2_{\comp}\to L_{\loc}^2$ bounds for the solution operator for $hD_{x_1}$ where we impose the condition that the solution be supported in $x_1\geq 0$.

Let $U$ be an open set.  Assume there exists a null bicharacteristic curve
$\gamma$ such that 
\begin{equation}
\label{e:U}
\{\pi(\gamma(s)),\ s \in (0, L)\} \subset U,\quad \pi(\gamma(s)) \notin U \text{
  for } s \notin (0,L).
\end{equation}
For any interval $I$ (open or closed), let
$$
\Gamma_I\equiv \{\gamma(s),\ s \in I\}\subset T^*\!M,\qquad \Gamma^0_I=\{x_1\in I,\xi_1=0,x'=\xi'=0\}\subset T^*\RR^n
$$

\begin{lemma}\label{lemma:longdarboux}
There exist neighborhoods $V\subset T^*\!M$ of $\Gamma_{[0,L]}$, $U\subset T^*\RR^n$ of $\Gamma^0_{[0,L]}$ and a symplectomorphism $\kappa:U\to V$ with
$$
\kappa: \Gamma^0_{[0,L]} \to \Gamma_{[0,L]},\quad \kappa^* p=\xi_1.
$$
\end{lemma}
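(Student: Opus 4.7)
The plan is to build $\kappa$ by the classical flow-box construction near an embedded segment of a bicharacteristic, following the standard symplectic straightening argument; this is essentially Hörmander's local normal form for a real function with nonvanishing differential (see e.g.\ \cite[Theorem 21.1.9]{Hormander:v3}), extended along a flow segment by the flow itself. Since $dp\neq 0$ on $\{p=0\}$ and $\gamma|_{[0,L]}$ is an integral curve of the nowhere-vanishing vector field $H_p$ which exits $U$ in finite time by \eqref{e:U}, $\gamma$ is an embedded smooth curve on a slightly larger interval $[-\delta,L+\delta]$.

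The construction proceeds in three steps. First, pick a smooth codimension-two submanifold $S\subset \{p=0\}$ through $\gamma(0)$, transverse to $H_p$; since the kernel of $\omega|_{\{p=0\}}$ is $\mathbb{R}H_p$, the restriction $\omega|_S$ is nondegenerate, so $(S,\omega|_S)$ is symplectic of dimension $2n-2$ and we may choose symplectic Darboux coordinates $(y',\eta')$ on a neighborhood of $\gamma(0)$ in $S$ sending $\gamma(0)\mapsto 0$. Second, apply the local symplectic normal form at $\gamma(0)$ to obtain a symplectomorphism $\kappa_0$ from a neighborhood of the origin in $T^*\mathbb{R}^n$ onto a neighborhood of $\gamma(0)$ in $T^*M$ with $\kappa_0^*p=\xi_1$, $\kappa_0(0)=\gamma(0)$, and such that $\kappa_0|_{\{x_1=\xi_1=0\}}$ agrees with the Darboux chart already fixed on $S$. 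Third, propagate $\kappa_0$ along the $H_p$-flow by the prescription
\[
  \kappa(x_1+t,\,x',\,\xi_1,\,\xi')\;:=\;\varphi_t^{H_p}\!\bigl(\kappa_0(x_1,\,x',\,\xi_1,\,\xi')\bigr),
\]
valid for all $(t,x_1)$ with $0\le x_1+t\le L$. Because $\varphi_t^{H_p}$ is a symplectomorphism preserving $p$, and because on the model side the translation $(x_1,x',\xi_1,\xi')\mapsto(x_1+t,x',\xi_1,\xi')$ is exactly $\varphi_t^{H_{\xi_1}}$ and preserves $\xi_1$, this formula is consistent with $\kappa_0$ on overlaps; it defines a smooth symplectomorphism from a neighborhood $U$ of $\Gamma^0_{[0,L]}$ in $T^*\mathbb{R}^n$ onto a neighborhood $V$ of $\Gamma_{[0,L]}$ in $T^*M$, satisfies $\kappa^*p=\xi_1$, and sends $\Gamma^0_{[0,L]}$ to $\Gamma_{[0,L]}$.

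The main obstacle is verifying that the flow-extended map is globally single-valued and yields a full neighborhood of $\Gamma_{[0,L]}$; this reduces to the embedded-ness of $\gamma|_{[-\delta,L+\delta]}$ together with the transversality of $S$ to $H_p$, both guaranteed by the hypotheses. A secondary technical point is that a \emph{uniform} neighborhood of $\Gamma^0_{[0,L]}$ can be chosen on which $\kappa$ is defined; this follows from compactness of $[0,L]$ and continuous dependence of the flow on initial data, so that the size of the local chart $\kappa_0$ (hence the width of the flow-box) may be taken uniform in $t\in[0,L]$.
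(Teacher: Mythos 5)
Your proposal is correct and follows essentially the same route as the paper: a local Darboux normal form $\kappa_0^*p=\xi_1$ at $\gamma(0)$, extended along the segment by the $H_p$-flow, with injectivity of the flow-box guaranteed by the embeddedness of the finite, non-periodic bicharacteristic segment. The only difference is presentational—the paper propagates the coordinate functions along the flow and checks the canonical Poisson-bracket relations via Jacobi's identity, whereas you conjugate the chart by the two Hamiltonian flows and invoke symplecticity of the flow directly; these are equivalent verifications of the same construction.
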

\begin{proof}
Let $\rho_0=\gamma(0)$. Then by Darboux's theorem, there exist neighborhood $V_1\subset T^*\!M$ of $\rho_0$, $U_1\subset T^*\RR^n$ of $0$ and a symplectomorphism $\kappa_1:U_1\to V_1$ so that 
$$
\kappa_1^*p=\xi_1,\qquad \kappa_1:\Gamma^0_{[0,L]}\cap U_1\to \Gamma_{[0,L]}\cap V_1. 
$$

We will extend $\kappa_1$ so that its image covers a neighborhood of $\Gamma_{[0,L]}$. For this, let $\Sigma:=\kappa_1 (\{x_1=0\}\cap U_1).$ Shrinking $U_1$ if necessary, we assume that there is $\e>0$ so that 
$$(-\e,L+\e)\times\Sigma \ni (t,q)\mapsto \varphi_t(q)\in V\subset  T^*\!M$$
is a diffeomorphism onto its image, $V$. Then, let 
\begin{gather*}
x_i(\varphi_t(q))=x_i(\kappa_1(q)),\quad 1<i\leq n,\qquad \xi_i(\varphi_t(q))=\xi_i(\kappa_i(q)),\quad 1\leq i\leq n,\\
 x_1(\varphi_t(q))=t.
\end{gather*}
In particular, we have 
$$
H_px_i=0,\quad 1<i\leq n,\qquad H_p\xi_i=0,\quad 1\leq i \leq n,\qquad H_px_1=1.
$$

By Jacobi's identity, for $f,g\in \CR^{\infty}(T^*\!M)$, 
$$
H_p\{g,f\}=\{g,H_pf\}-\{f,H_pg\}.
$$
Therefore, since the corresponding identities hold at $\Sigma$,
$$
\{x_i,x_j\}=0,\qquad \{\xi_i,\xi_j\}=0,\qquad \{\xi_i,x_j\}=\delta_{ij}.
$$
In particular, this implies that $\kappa^{-1}:V\to T^*\RR^n$
$$\kappa^{-1}(q):=(x(q),\xi(q))$$
is a symplectomorphism onto its image, $U$. Furthermore, since $p$ is invariant under $H_p$, $\kappa^*p=\xi_1$ and hence also $\kappa(\Gamma^0_{[0,L]})=\Gamma_{[0,L]}$. 
\end{proof}

\begin{proposition}
\label{p:lower}
Let $\gamma,$ a geodesic, and $U\subset M$ satisfy~\eqref{e:U}. Then for any $\ep>0$ there exists $\chi \in \CR_c^\infty(U)$ and
$f \in L^2$ with $\supp f \subset \{\chi=1\}$ and $u \in L^2_{\loc}$ with
$$
P u=f
$$
and
$$
\WF_h u \subset \bigcup_{s \geq 0} \big[\exp(sH_p) \WF_h f\big].
$$
and where
$$
\norm{\chi u}_{L^2} \geq \bigg( \frac {2 L-\e}{\pi h} 
\bigg) \norm{f}_{L^2}
$$
\end{proposition}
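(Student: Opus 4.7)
The plan is to reduce the problem microlocally, via Lemma~\ref{lemma:longdarboux}, to the flat model $hD_{x_1}$ on $\mathbb{R}^n$, saturate the Volterra operator bound in this model, transfer back to $M$ using an $h$-FIO satisfying an exact-to-$O(h^\infty)$ Egorov conjugation, and correct the $O(h^\infty)$ remainder with the outgoing resolvent.

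For the model on $\mathbb{R}^n$, I will use the Volterra operator $Vg(t)=\int_0^t g(s)\,ds$ on $L^2([0,L])$, whose norm $2L/\pi$ is attained by the unit eigenfunction $g_L(t)=\sqrt{2/L}\cos(\pi t/(2L))$ of $V^*V$ (eigenvalue $(2L/\pi)^2$). Extending $g_L$ by zero to $\mathbb{R}$ and fixing $\psi\in \CR_c^\infty(\mathbb{R}^{n-1})$ of unit norm with small support near $0$, I set
$$f_0(x_1,x')=g_L(x_1)\psi(x'),\qquad u_0(x_1,x')=\frac{i}{h}(Vg_L)(x_1)\psi(x').$$
Then $hD_{x_1}u_0=f_0$, $\mathrm{supp}\,u_0\subset\{x_1\geq 0\}$, and $u_0$ is outgoing from $f_0$ under $\exp(sH_{\xi_1})$. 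For $\delta=\delta(\epsilon)>0$ small and $\chi_0\in \CR_c^\infty(\mathbb{R}^n)$ equal to $1$ on $[0,L-\delta]\times\mathrm{supp}\,\psi$, the explicit formulas give
$$\|\chi_0 u_0\|_{L^2}=\frac{1}{h}\Big(\int_0^{L-\delta}|Vg_L|^2\,dx_1\Big)^{1/2}\|\psi\|_{L^2}\geq \frac{2L-\epsilon/2}{\pi h}\|f_0\|_{L^2}.$$

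For the transfer, by Lemma~\ref{lemma:longdarboux} the symplectomorphism $\kappa$ carries $\Gamma^0_{[0,L]}$ onto $\Gamma_{[0,L]}$ with $\kappa^*p=\xi_1$, and $\pi(\Gamma_{[0,L]})\subset U$ by~\eqref{e:U}. Standard semiclassical FIO theory for operators of real principal type produces an $h$-FIO $T$ quantizing $\kappa$, microlocally unitary near $\Gamma^0_{[0,L]}$, and satisfying the exact Egorov conjugation
$$PT\equiv T(hD_{x_1})\quad\text{modulo an $O(h^\infty)$ smoothing operator,}$$
constructed by iteratively adjusting subprincipal amplitudes in $T$ to solve first-order transport equations along the compact non-returning bicharacteristic. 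Setting $f:=Tf_0$ and $u_{\mathrm{app}}:=Tu_0$ gives $Pu_{\mathrm{app}}=f+r$ with $\|r\|_{L^2}=O(h^\infty)\|f_0\|_{L^2}$. The nontrapping outgoing resolvent bound $\|R(k)\|_{L^2_{\mathrm{comp}}\to L^2_{\mathrm{loc}}}=O(h^{-1})$ proved in the upper-bound part of the theorem then yields $\|R(k)r\|_{L^2_{\mathrm{loc}}}=O(h^\infty)$, so that $u:=u_{\mathrm{app}}-R(k)r$ is an exact solution of $Pu=f$ whose outgoing wavefront set condition is inherited from $u_0$ via $T$ together with the outgoing character of $R(k)$. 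Shrinking $\mathrm{supp}\,\psi$ and $\delta$ so that $\mathrm{supp}\,f$ and the image under $\pi\circ\kappa$ of $\mathrm{supp}\,\chi_0$ both lie in $U$, and picking $\chi\in \CR_c^\infty(U;[0,1])$ equal to $1$ on these sets, approximate unitarity of $T$ gives $\|f\|_{L^2}=(1+o(1))\|f_0\|_{L^2}$ and
$$\|\chi u\|_{L^2}\geq \|\chi u_{\mathrm{app}}\|_{L^2}-\|\chi R(k)r\|_{L^2}\geq \frac{2L-\epsilon}{\pi h}\|f\|_{L^2}$$
for $h$ sufficiently small.

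The main obstacle is obtaining the Egorov conjugation exactly to $O(h^\infty)$, not merely at the principal-symbol level. A principal-symbol-only Egorov yields only $\|r\|_{L^2}=O(1)=O(\|f\|)$, after which $R(k)r$ is of the same order $O(h^{-1}\|f\|)$ as $u_{\mathrm{app}}$ itself, destroying the sharp constant $2L/\pi$. Achieving the higher-order normal form requires iteratively killing subprincipal obstructions via transport equations along the bicharacteristic, which is solvable precisely because $\gamma$ is a compact non-returning segment by~\eqref{e:U}; this is the point at which the geometry of $\gamma$ enters quantitatively. A secondary, routine point is the compatibility of the various cutoffs ($\psi$, $\chi_0$, $\chi$, and the effective support of $T$) with $U$ and $V$, achieved by shrinking $\mathrm{supp}\,\psi$ and $\delta$.
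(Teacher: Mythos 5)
Your overall strategy (conjugate $P$ to $hD_{x_1}$ by an $h$-FIO quantizing the Darboux map of Lemma~\ref{lemma:longdarboux}, saturate the Volterra bound with the cosine profile whose antiderivative is the sine, and correct with the outgoing resolvent) is the same as the paper's, and your model computation is fine. But there is a genuine gap at the step where you claim $Pu_{\mathrm{app}}=f+r$ with $\|r\|_{L^2}=O(h^\infty)\|f_0\|_{L^2}$. Your $u_0=\tfrac{i}{h}(Vg_L)\psi(x')$ is \emph{not} compactly supported in $x_1$: for $x_1>L$ it equals a nonzero constant times $\psi(x')$, so its semiclassical wavefront set (in the zero section) extends past the region where $T$ quantizes $\kappa$ and where $PT\equiv T hD_{x_1}$ holds. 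You must therefore truncate $u_0$ by a cutoff $\psi(x_1)$ supported near $[0,L]$, and the commutator term $hD_{x_1}(\psi)\,v_0$ is of size $O(1)$ in $L^2$ — the same size as $f$ — because $v_0\sim h^{-1}$. No amount of refining the Egorov conjugation removes this error; the full normal form to $O(h^\infty)$ (which you correctly note is achievable, and which the paper simply invokes) only controls the conjugation error, not the truncation error.

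Consequently $(P-i0)^{-1}r$ is a priori of size $O(h^{-1})$, comparable to $u_{\mathrm{app}}$ itself, and the sharp constant would be destroyed unless one shows that $\chi(P-i0)^{-1}r=O(h^\infty)$. This is the essential point your proposal omits, and it is where the geometry enters: the truncation error $r$ has wavefront set in an arbitrarily small neighborhood of the \emph{exit point} $\gamma(L)$, and by propagation of singularities for the outgoing resolvent, $\WF_h\big((P-i0)^{-1}r\big)$ lies in the forward flowout of $\WF_h r$; hypothesis~\eqref{e:U} together with nontrapping guarantees this flowout leaves $U$ and never returns, hence misses $\supp\chi$. That is how the paper salvages $\chi u=\chi w+O(h^\infty)$. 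Your "secondary, routine point" about cutoff compatibility is in fact the crux, and your "main obstacle" (the $O(h^\infty)$ normal form) is the routine part.
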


Note that $u$ is purely microlocally outgoing by the wavefront set
statement, hence is given by the outgoing resolvent applied to $f$
(modulo $O(h^\infty)$).
\begin{proof}
We first let $\kappa$ be a symplectomorphism
from $T^*\RR^n$ into $T^*\!M$ with
$$
\kappa: \Gamma^0_{[0,L]} \to \Gamma_{[0,L]},\quad \kappa^* p=\xi_1,
$$
where
$$
\Gamma^0_I=\{x_1 \in I, \xi_1=0, x'=\xi'=0\};
$$
this exists by Lemma~\ref{lemma:longdarboux}.

Now quantize $\kappa$ to a microlocally unitary FIO $T: \CR^\infty_c(\RR^n)
\to \CR^\infty_c(M)$ with parametrix $S$ such that $ST-I, TS-I$ have no
microsupport near $\Gamma^0_{[0,L]}$ resp.\ $\Gamma^0_{[0,L]}$ and such that
$$
T hD_1=PT + O(h^\infty)\ \text{microlocally near } \Gamma^0_{[0,L]}.
$$
(There are no obstructions to such a quantization as long as we work
on a contractible neighborhood of $\Gamma^0$.)
Assume without loss of generality that $T$ is defined on a
$\delta_0$-neighborhood of $\Gamma^0_{[0, L]}$.

Now for any $\delta<\delta_0$
we define on $\RR^n$
$$
f_0 =
\begin{cases}
  \phi(x')  \cos \pi (x_1-\delta)/2(L-2\delta),  & x_1 \in [\delta, L-\delta],\\
  0, & \text{otherwise}
\end{cases}
$$
where $\phi$ is smooth and compactly supported in $B(0, \delta)$, with
$$
\int \abs{\phi(x')}^2 \, dx'=1.
$$
Let
$$
v_0 =\begin{cases}
 0, & x_1<\delta, \\
  h^{-1} \frac{2(L-2\delta)}\pi \phi(x')  \sin \pi (x_1-\delta)/2(L-2\delta),  & x_1 \in [\delta, L-\delta],\\
h^{-1} \frac{2(L-2\delta)}\pi \phi(x'), & x_1>L-\delta;
\end{cases}
$$
Thus of course $h D_1 v_0 = f_0$ as distributions.
Let $\psi(x_1)$ be a smooth cutoff function supported in $(0,
L-\delta/4)$ and equal to $1$ on $[\delta, L-\delta/2].$
Set $u_0=\psi(x_1) v_0,$
$w=T u_0,$
and $f=T f_0$
Then modulo $O(h^\infty),$
\begin{align*}
  Pw&\equiv PT u_0\equiv T hD_{x_1} u_0\equiv T f_0 +Tr_0\equiv f+r  
\end{align*}
where $r_0$ is the error term coming from the $hD_1(\psi)$ term.  Note
that
$$
\WF r_0\subset \big\{ x_1 \in [L-\delta/2, L-\delta/4],\
\smallabs{x'}<\delta, \xi=0\};
$$
which is thus in an arbitrarily small neighborhood of the endpoint of
$\Gamma^0_{[0, L]}.$
Thus by the construction of $T$,
$r\equiv Tr_0$ has wavefront set in an arbitararily small neighborhood
of $\gamma(L)$ (as $\delta\downarrow 0$).

Note also that we may construct a cutoff function $\chi \in \CR_c^\infty(U)$
such that
$$\chi=1 \text{ on } \WF_h f \text{ and }\chi=0 \text{ on }\pi
\{\exp(s H_p)(\WF_h r),\ s\geq 0\}:$$ This follows from our hypotheses
since on $\RR^n$ $f$ is supported on
$x_1 \in [\delta, L-\delta]$ in a small neighborhood of $\Gamma^0$
while $\WF_h r_0 \subset x_1 \in [L-\delta/2, L-\delta/4],$ and the
forward flowout is thus contained (in these local coordinates) in
$\{x_1 \geq L-\delta/2\}.$ Since the Hamilton flow is non-radial and
$x_1$ maps to the flow parameter under $\kappa,$ we know that
$\WF_h r_0$ is separated from a neighborhood of $\Gamma_{[0,L-\delta]}$ in
\emph{base} variables, not just in the cotangent bundle, and shrinking
$\delta$ if necessary we obtain the separation in the base variables.
Note for use below that since $\chi=1$ on $\WF_h f,$ pulling back by
$T$ yields a fortiori $\kappa^* (\chi) =1$ on $\WF_h f_0$ and hence
  \begin{equation}
    \label{eq:1}
    \kappa^*(\chi) =  1 \text{ on } \big\{ x_1 \in [\delta, L-\delta],\
\smallabs{x'}<\delta, \xi=0\}
  \end{equation}

Finally we set
$$
u=w-(P-i0)^{-1} r
$$
so that
$$
Pu=f.
$$
Owing to the propagation of singularities for the outgoing
resolvent, $\WF_h (u-w)$ lies in the \emph{forward flowout} of $\WF_h
r;$ by construction this $\WF_h r$ is  disjoint from the support of $\chi$ and by
our geometric hypotheses, its forward flowout \emph{remains} disjoint from
$U:$ here we use the fact that $\pi \gamma(L+s)\notin U$ for $s>0$ and
indeed this point escapes to infinity, hence $\pi \exp (sH_p)
(\gamma(L)) \notin \WF_h f$ for all $s>0.$  By continuity, the same is
true with $\gamma(L)$ replaced by any point in $\WF_h r$ for $\delta$
sufficiently small.  Thus,
$$
\chi u = \chi w +O(h^\infty).
$$
By microlocal unitarity of $T$ we compute by the Egorov Theorem
\begin{align*}
  \norm{\chi u}^2 &= \norm{\chi w}^2+O(h^\infty)\\
  &= \norm{S\chi TS w}^2+O(h^\infty)\\
                  &= \norm{\Op(\kappa^* \chi) u_0}^2+O(h)\\
                  &\geq \int_\delta^{1-\delta} \int_{B(0, \delta)}
  \smallabs{v_0}^2 dx' \,
                  dx_1  +O(h)\\
                  &\geq \int_\delta^{1-\delta} \int_{B(0, \delta)}
  h^{-2} \frac{4(L-2\delta)^2}{\pi^2} \abs{\phi(x') }^2 \sin^2 \pi x_1/2(L-\delta)
\, dx' \,
                    dx_1  +O(h)\\
  &= \frac\pi 4 \frac{1}{h^2\mu^3} + O(h),
\end{align*}
where
$$
\mu=\frac{\pi}{2(L-2\delta)};
$$
here we have used the fact that $\kappa^*\chi$ equals $1$ on the zero
section over $[\delta, 1-\delta] \times B(0,\delta),$ while the
semiclassical wavefront set of $u_0$ lies within the zero section,
hence the last inequality follows by existence of a microlocal
parametrix for $\Op (\kappa^*\chi)$ on this set.

Meanwhile,
\begin{align*}
\norm{f}^2 &=\int_\delta^{1-\delta} \int_{B(0, \delta)}
                  \abs{\phi(x')}^2  \cos^2 \pi x_1/2(L-2\delta)\, dx' \,
                  dx_1  = \frac \pi {4} \frac 1 \mu.
\end{align*}
Thus
\begin{align*}
  \frac{\norm{\chi u}}{\norm{f}} &\geq \frac{1}{h \mu}= \frac{2(L-2\delta)}{\pi h}.
\end{align*}
\end{proof}

 Finally, we apply Proposition~\ref{p:lower} in the case of a
  nontrapping manifold with Euclidean ends and $P=-h^2\Delta_g-1$.
  Let $R_1<R'<R''$ so that $B(0,R'')\subset \{\chi \equiv 1\}$. By the
  definition of $L(g,R'')$, there is a null-bicharacteristic
  (a speed-$2$ geodesic lifted to $S^*M$) $\gamma$ with
  $\pi \gamma(0)\in \partial B(0,R'')$, $\pi \gamma(s)\in B(0,R'')$ for
  $s\in (0,L(g,R''))$ and $\gamma(s)\notin B(0,R'')$ for
  $s>L(g,R'')$. In particular, since $L(g,R')<L(g,R'')$,
  Proposition~\ref{p:lower} applies to $\gamma$ with $U=B(0,R'')$ and
  $\e<2L(g,R'')-2L(g,R')$. In particular, there exist $f \in L^2(M)$
  with $\supp f\subset B(0,R'')$ so that
\begin{equation}
\label{e:wf}
Pu=hf,\qquad \qquad \WF_h u \subset \bigcup_{s\geq0} [\exp(sH_p)\WF_h f]
\end{equation}
and
$$
\|u\|_{L^2(B(0,R''))}\geq \frac{2L(g,R'')-2L(g,R'')+2L(g,R')}{\pi}\|f\|_{L^2}=\frac{2L(g,R')}{\pi }\|f\|_{L^2}.
$$ 
Next, observe that ~\eqref{e:wf} implies that 
$$
u=(-h^2\Delta_g-1-i0)^{-1}hf+O(h^\infty\|f\|_{L^2})_{L_{\loc}^2}
$$
In particular, letting $k=h^{-1}$, 
$$
\|\chi (-\Delta_g-k^2-i0)^{-1}f\|_{L^2}\geq \left(\frac{2L(g,R')}{\pi k }-C_Nk^{-N}\right)\|f\|_{L^2}
$$
completing the proof of the lower bound in Theorem~\ref{theorem:main}.

\section{Application to numerical analysis of the finite-element method}\label{sec:FEM}

\subsection{Summary}

In this section, we focus on the implications the bound \eqref{Sobolevupperbound2} has on the numerical analysis of solving the Helmholtz exterior Dirichlet problem by the finite-element method (FEM).  We mention two other numerical-analysis applications of  \eqref{Sobolevupperbound2} in Remarks \ref{rem:precondition} and \ref{rem:UQ} at the end.

We consider the \emph{h-version} of the FEM; i.e.~the solution is
approximated in spaces of piecewise polynomials of fixed degree on a
mesh with meshwidth $\hFEM$ (we use this notation to avoid a notational clash with the semiclassical parameter $h:=k^{-1}$ in the rest of the paper).
  The question of how fast $\hFEM$ must decrease with $k$ to maintain accuracy as $k\tendi$ was thoroughly investigated in the case of the \emph{constant-coefficient} Helmholtz equation (i.e.~\eqref{eq:PDE} below with $A=I$ and $\coeffn=1$) by \cite{IhBa:95a, IhBa:97} when $d=1$ and by \cite{Me:95, MeSa:10, MeSa:11, EsMe:12} when $d=2, 3$. For example, in the case of piecewise-linear polynomials, and when the 
solution of the boundary value problem is nontrapping, 
  the FEM satisfies a \emph{quasioptimal} error estimate (of the form \eqref{eq:QO} and \eqref{eq:QOS} below) when 
\beq\label{eq:E1}
\hFEM k^2 \leq c
\eeq
for a sufficiently small constant $c$ (the case when the boundary value problem is trapping is more complicated; see \cite[\S1.4]{ChSpGiSm:17} for some initial results).

In this section, we use the bound \eqref{Sobolevupperbound2}  to prove the analogue of the result above in the case of the \emph{variable-coefficient} Helmholtz equation (\eqref{eq:PDE} below) posed in the exterior of a nontrapping Dirichlet obstacle (see Theorem \ref{thm:Emain} below). In particular, we show how the constant $c$ in \eqref{eq:E1} depends on the constant in \eqref{Sobolevupperbound2}. The key point is that our bound on $\hFEM$ shows explicitly how the constant $c$ in \eqref{eq:E1} \emph{decreases} (i.e.~the requirement on $\hFEM$ for quasioptimality becomes more stringent) as the length of the longest ray \emph{increases}.

\subsection{The Helmholtz exterior Dirichlet problem and FEM set-up}\label{sec:EDPsetup}

\hspace{1ex}

\

\paragraph{{\bf Motivation from applications}}
Several important physical applications involve the PDE
\beq\label{eq:PDE}
\nabla\cdot(\coeffA \gu ) + k^2 \coeffn u = -f
\eeq
posed in $\Rea^n\setminus{\Omega}$, 
where $A$ is a symmetric positive-definite matrix-valued function of position and $\coeffn$ is a strictly-positive scalar-valued function of position.
One example is via reductions of the time-harmonic Maxwell equations
\beq\label{eq:Maxwell}
{\rm curl} H + \ri k \eps E = (\ri k)^{-1} J, \quad {\rm curl} E - \ri k \mu H=0 \quad \text{ in } \Rea^3,
\eeq
when all the fields involved depend only on two Cartesian space variables, say $x$ and $y$.
In the transverse-magnetic (TM) mode, $J$ and $E$ are given by $J = (0,0,J_z(x,y))$ and $E = (0,0, E_z(x,y))$ so, when additionally the permittivity $\eps$ is a scalar and the permeability $\mu$ satisfies 
\beq\label{eq:mu}
\mu = \left(
\begin{array}{cc}
\widetilde{\mu} & 0\\
0&1 
\end{array}
\right)
\eeq
for $\widetilde{\mu}$ a $2\times2$ symmetric positive-definite matrix, 
$E_z$ satisfies \eqref{eq:PDE} with $\coeffn=\eps$,
\beq\label{eq:TEA}
A = 
\left(
\begin{array}{cc}
0 & 1\\
-1&0
\end{array}
\right)^T
\big(\widetilde{\mu}\big)^{-1}
\left(
\begin{array}{cc}
0 & 1\\
-1&0
\end{array}
\right),
\eeq
and $f= J_z$.
Similarly, 
in the case of the transverse-electric (TE) mode, $J$ and $H$ are given by $J = (J_x(x,y), J_y(x,y),0)$ and $H = (0,0, H_z(x,y))$, so that when $\mu$ is a scalar and $\eps$ satisfies an equation analogous to \eqref{eq:mu}, $H_z$ satisfies \eqref{eq:PDE} with $A$ given by \eqref{eq:TEA} with $\widetilde{\mu}$ replaced by $\widetilde{\eps}$, $\coeffn= \mu$, and
\beqs
f= -\frac{1}{\ri k} \nabla\cdot \left[
\left(
\begin{array}{cc}
0 & 1\\
-1&0
\end{array}
\right)
\big(\widetilde{\eps}\big)^{-1}
\left(
\begin{array}{c}
J_x\\
J_y
\end{array}
\right)
\right].
\eeqs 
Additionally, the so-called acoustic-approximation of the elastodynamic wave equation is \eqref{eq:PDE} with $A$ the inverse of the (scalar) density and $\coeffn$ the inverse of the (scalar) bulk modulus; see, e.g., the derivation in \cite[\S1.2.6]{Ch:15}.

\

\paragraph{{\bf Placing the PDE \eqref{eq:PDE} in the framework of \S\ref{sec:moregeneral}}}
We let $M:= \Rea^n\setminus \Omega$, where $\Omega$ is a compact set such that its complement is connected,
\beqs
g^{ij} := \frac{A_{ij}}{\coeffn}, \quad L_j:= \sum_{i=1}^n \frac{g^{ij}}{\sqrt{{\rm det} \,g}} \partial_i \big( \sqrt{{\rm det} \,g}\big)
+\sum_{i=1}^n\partial_i(g^{ij}) - \sum_{i=1}^n\frac{1}{\coeffn}\partial_i (A_{ij})
, \quad L:=0,
\eeqs
and
\beqs
P(g)\phi:= -\frac{1}{\coeffn}\sum_{i,j=1}^n\partial_i \big(A_{ij}\partial_j \phi),
\eeqs
and then observe that \eqref{eq:convert1} is satisfied, since
\beqs
\Delta_g \phi := \sum_{i,j=1}^n \frac{1}{\sqrt{ {\rm det}\, g}} \partial_i \big(\sqrt{ {\rm det}\, g}\, g^{ij} \, \partial_j \phi\big).
\eeqs
With these definitions, the PDE \eqref{eq:PDE} is $(P(g) - k^2)u=f/\nu$.
The operator $P_\Omega$ is then self-adjoint with respect to $L^2(\Rea^n\setminus\Omega;\nu)$ and the $H^1$ norm defined by \eqref{eq:weightednorm} becomes
\beq\label{eq:H1norm}
\N{\phi}^2_{H^1_\coeffn (\Rea^n\setminus \Omega)}:= \int_{\Rea^n \setminus \Omega} \big( (A \nabla \phi)\cdot\overline{\nabla \phi} + k^2 \coeffn|\phi|^2\big)\rd \bx.
\eeq
\emph{In this section (\S\ref{sec:FEM}) only}, to make contact with the standard numerical-analysis literature, \emph{we use non-semiclassically scaled Sobolev spaces}; i.e., the norms on the spaces $H^s$ do \emph{not} include powers of $k$ unless otherwise indicated. Indeed, in this section we write \eqref{eq:H1norm} as the weighted $H^1$ norm
\beq\label{eq:1knormg}
\N{\phi}^2_{H^1_{k,A,\coeffn}(\Rea^n\setminus\Omega)} := \big\|\coeffA^{1/2}\nabla\phi\big\|^2_{L^2(\Rea^n\setminus\Omega)} + k^2 \big\|{\coeffn}^{1/2}\phi\big\|^2_{L^2(\Rea^n\setminus\Omega)} ,
\eeq
and we use analogous notation for norms over subsets of $\Rea^n\setminus\Omega$. 
We also define the norms
$$
\|\phi\|_{H^m(\re^n\setminus \Omega)}^2:=\sum_{|\alpha|\leq m}\int_{\re^n\setminus \Omega} |\partial^\alpha \phi|^2\,\rd x;
$$
i.e., if there are no $A,\nu$ in the subscript, the $H^m$ norm is the `standard' $H^m$ norm on $\re^n$.

By \eqref{eq:L}, $L(\coeffn\coeffA^{-1}, \Omega, R)$ is the length of the longest generalized bicharacteristic (informally, ray) in $\ballR\setminus \Omega$. Recall that, in the case $\Omega=\emptyset$, the (generalized) bicharacteristics are the projections in $\bx$ of the solutions $(\bx(s), \bxi(s)) \in \Rea^n\times \Rea^n$ of the Hamiltonian system
\beq\label{eq:rays}
\diff{x_i}{s}(s) = \pdiff{}{\xi_i}H\big(\bx(s), \bxi(s) \big), \qquad
\diff{\xi_i}{s}(s)
 = -\pdiff{}{x_i}H\big(\bx(s), \bxi(s) \big),
\eeq
where the Hamiltonian $H(\bx,\bxi)$ is given by 
\beq\label{eq:Hamiltonian}
H(\bx,\bxi):= \frac{1}{\coeffn(\bx)}\sum_{i=1}^n\sum_{j=1}^{n} A_{ij}(\bx)\xi_i \xi_j - 1.
\eeq

\paragraph{{\bf Exterior Dirichlet problem and its variational formulation}}

With $\Omega$  a compact set such that its complement is connected, we define $\domain_+:= \Rea^n\setminus \Omega$. 
Since the vast majority of numerical-analysis applications of the Helmholtz equation are in two and three dimensions, we restrict attention to $d=2,3$.
Let $\gamma :H^1_{\rm loc}(\Omega) \rightarrow H^{1/2}(\partial\Omega)$ be the trace operator.
Let $A$ be a symmetric positive-definite matrix-valued function of position such that $\supp(I-A)$ is a compact subset of $\Rea^n$. 
Let $\coeffn$ be a strictly-positive scalar-valued function of position such that $\supp(1-\coeffn)$ is a compact subset of $\Rea^n$.
Let $A_{\min}$ and $A_{\max}$ be such that 
\beq\label{eq:Amin}
A_{\min} \leq A(\bx)\leq A_{\max}\quad\tfa \bx\in \domain_+, \text{ in the sense of quadratic forms.}
\eeq
and $\coeffn_{\min}$ and $\coeffn_{\max}$ be such that 
\beq\label{eq:nmin}
\coeffn_{\min} \leq \coeffn(\bx)\leq \coeffn_{\max}\quad\tfa \bx\in \domain_+.
\eeq
We consider solving \emph{both} the exterior Dirichlet problem 
\begin{subequations}\label{eq:EDP1}
\beq
\nabla\cdot(\coeffA \gu ) + k^2 \coeffn u = -f \quad \tin \domain_+,
\eeq
\beq
\trace u =0 \quad\ton \partial \Omega, \quad\tand
\eeq
\beq\label{eq:src}
\pdiff{u}{r}(\bx) - \ri k u(\bx) = o \left( \frac{1}{r^{(d-1)/2}}\right) \quad \text{as $r:= |\bx|\tendi$, uniformly in $\widehat{\bx}:= \bx/r$},
\eeq
\end{subequations}
\emph{and} the sound-soft scattering problem 
\begin{subequations}\label{eq:EDP2}
\beq
\nabla\cdot\big(\coeffA \gu \big) + k^2 \coeffn u =0 \quad \tin \domain_+,
\eeq
\beq
\trace u =0 \quad\ton \partial \Omega, \tand
\eeq
\beq
 (u-u^I) \text{ satisfies the radiation condition \eqref{eq:src}},
 \eeq
 \end{subequations}
where $u^I$ is solution of $\Delta u^I +k^2 u^I=0$ (such as a plane wave or point source) that is smooth in a neighborhood of $\supp(I-\coeffA)\cup\supp(1-\coeffn)\cup \Omega$.

The standard variational formulations of these problems are posed on $\domain_R:= \domain_+\cap \ballR = \ballR \setminus \Omega$, where $R$ is chosen large enough such that $\supp (I-\coeffA)$, $\supp(1-\coeffn)$, and $\supp\, f$ are all compactly contained in $\ballR$, and also such that $u^I$ is smooth in $\ballR$.
Let 
\beq\label{eq:spaceEDP}
H_{0,D}^1(\domain_R):= \big\{ v\in H^1(\domain_R) : \trace v=0 \ton \partial \Omega\big\},
\eeq
let $\Gamma_R:= \partial \ballR$, and 
let $T_R: H^{1/2}(\Gamma_R) \rightarrow H^{-1/2}(\Gamma_R)$ be the Dirichlet-to-Neumann (DtN) map for the equation $\Delta u+k^2 u=0$ posed in the exterior of $\ballR$ with the Sommerfeld radiation condition \eqref{eq:src}; the definition of $T_R$ in terms of Hankel functions and polar coordinates (when $d=2$)/spherical polar coordinates (when $d=3$) is given in, e.g., \cite[Equations 3.5 and 3.6]{ChMo:08} \cite[\S2.6.3]{Ne:01}, \cite[Equations 3.7 and 3.10]{MeSa:10}. 
The variational formulation of \eqref{eq:EDP1} is:
\beq\label{eq:EDPvar}
\text{ find } u \in H^1_{0,D}(\domain_R) \tst\,\, a(u,v)=F(v) \,\,\tfa v\in H^1_{0,D}(\domain_R),
\eeq
where
\beq\label{eq:EDPa}
a(u,v):= \int_{\domain_R} 
\Big((\coeffA\gu)\cdot\gvb
 - k^2 \coeffn u\vb\Big)\,\rd \bx - \langle T_R (\trace u),\trace v\rangle_{\Gamma_R} \quad\tand\quad F(v):= \int_{\domain_R} f\, \vb\, \rd \bx,
\eeq
and where $\langle\cdot,\cdot\rangle_{\GR}$ denotes the duality pairing on $\GR$ that is linear in the first argument and antilinear in the second.
The variational formulation of \eqref{eq:EDP2} is \eqref{eq:EDPvar} but with $F(v)$ instead given by 
\beqs
F(v):= 
\int_{\Gamma_R} \left( \pdiff{u^I}{r} - T_R u^I\right)\trace\vb \, \rd s.
\eeqs

\

\paragraph{{\bf Finite-element method}}

Let $\cT_\hFEM$ be a family of triangulations of $\Omega_R$ (in the sense of, e.g., \cite[Page 67]{Ci:91})  that is shape regular (see, e.g., \cite[Definition 4.4.13]{BrSc:08}, \cite[Page 128]{Ci:91}).
 Let $$\cH_\hFEM:= \{v \in \CR(\overline{\Omega_R}) : v|_K \text{ is a polynomial of degree 1 for each } K \in \cT_{\hFEM}\},$$ and observe that the dimension of $\cH_\hFEM$ is proportional to $\hFEM^{-n}$.
 Our main result, Theorem \ref{thm:Emain} below, is valid when $\Omega$ is $\CR^{\infty}$ (or, more precisely, $\CR^m$ for some large $m$ not given explicitly). For such $\Omega$ it is not possible to fit $\partial \Omega$ exactly with simplicial elements
(i.e.~when each element of $\cT_{\hFEM}$ is a simplex), and fitting $\partial \Omega$ with isoparametric elements (see, e.g, \cite[Chapter VI]{Ci:91}) or curved elements (see, e.g., \cite{Be:89}) is impractical, and therefore  
some analysis of non-conforming error is necessary; since this is very standard (see, e.g., \cite[Chapter 10]{BrSc:08}), we ignore this issue here.

The finite-element method for the variational problem \eqref{eq:EDPvar} is the Galerkin method applied to the variational problem \eqref{eq:EDPvar}, i.e.
\beq\label{eq:EDPFEM}
\text{ find } u_\hFEM \in \cH_\hFEM\tst\,\, a(u_\hFEM,v_\hFEM)=F(v_\hFEM) \,\, \tfa v_\hFEM\in  \cH_\hFEM.
\eeq

\subsection{Main result}

Before stating the main result (Theorem \ref{thm:Emain} below) we define the following constants -- all independent of $k$ but dependent on one or more of $A$, $\coeffn$, $\Omega$, $R$, and $k_0$ -- upon which the main result depends.

\

\paragraph{$\Cint$:}Recall that the \emph{nodal interpolant} $I_\hFEM : \CR(\overline{\Omega_R}) \rightarrow \cH_\hFEM$ is well-defined for functions in $H^2(\Omega_R)$ (for $d=2,3$) and satisfies
\beq\label{eq:E2}
\big\|\coeffn^{1/2}(v- I_\hFEM v)\big\|_{L^2(\Omega_R)} + \hFEM \big\|\coeffA^{1/2}\nabla(v- I_\hFEM v)\big\|_{L^2(\Omega_R)} \leq \Cint (\hFEM)^2 \N{v}_{H^2(\Omega_R)} ,
\eeq
for all $v\in H^2(\Omega_R)$, for some constant $\Cint = \Cint(\coeffA,\coeffn)$. 
The only reason $\Cint$ depends on $A$ and $\coeffn$ is because of the weights $A$ and $\coeffn$ in the norms on the left-hand side of \eqref{eq:E2}. Indeed, by, e.g., \cite[Equation 4.4.28]{BrSc:08}, 
\beqs
\N{v- I_\hFEM v}_{L^2(\Omega_R)} + \hFEM \N{\nabla(v- I_\hFEM v)}_{L^2(\Omega_R)} \leq \Ctint (\hFEM)^2 \N{v}_{H^2(\Omega_R)} ,
\eeqs
for all $v\in H^2(\Omega_R)$, for some $\Ctint$ that depends only on the shape-regularity constant of the mesh, and thus \eqref{eq:E2} holds with 
\beqs
\Cint(\coeffA,\coeffn):=\Ctint\max\left\{ (A_{\max})^{1/2}, (\coeffn_{\max})^{1/2}\right\}.
\eeqs

\paragraph{$\CTR$:} There exists $\CTR= \CTR(A, \coeffn, R,k_0)$ such that 
\beq\label{eq:TR2}
\big|\big\langle T_R(\trace u), \trace v\rangle_{\Gamma_R}\big\rangle\big| \leq \CTR \N{u}_{\HoDkkg}  \N{v}_{\HoDkkg}  \quad \tfa u, v \in H^1(\domain_R) \text{ and for all } k\geq k_0;
\eeq
see \cite[Lemma 3.3]{MeSa:10}.
As above, the only reason $\CTR$ depends on $A$ and $\coeffn$ is because of the weights $A$ and $\coeffn$ in \eqref{eq:1knormg}.
Indeed, \cite[Lemma 3.3]{MeSa:10} bounds the left-hand side of \eqref{eq:TR2} by $H^{1/2}(\GR)$ and $L^2(\GR)$ norms of $\trace u$ and $\trace v$, and then uses trace theorems to prove that
\beqs
\big|\big\langle T_R(\trace u), \trace v\rangle_{\Gamma_R}\big\rangle\big| \leq 
\CtTR(R,k_0)\|u\|_{H^1_{k, I,1}(\Omega_R)}\|v\|_{H^1_{k, I,1}(\Omega_R)},
\eeqs
for some  
$\CtTR(R,k_0)$ given explicitly in the proof of \cite[Lemma 3.3]{MeSa:10}. Therefore, \eqref{eq:TR2} holds with 
\beqs
\CTR(A, \coeffn, R,k_0) := \CtTR(R,k_0) \max\left\{ \frac{1}{(A_{\min})^{1/2}}, \frac{1}{(\coeffn_{\min})^{1/2}}\right\}.
\eeqs
\

\paragraph{$\Creg$:} There exists $\Creg=\Creg(A,  \Omega, R)$ such that, if $\widetilde{f}\in L^2(\Omega_{R+1})$ and $v\in H^1(\Omega_{R+1})$ satisfy
\beqs
\nabla \cdot (\coeffA\nabla v) = \widetilde{f} \quad \tin \Omega_{R+1}\tand \trace v = 0 \ton \partial \Omega,
\eeqs
then 
\beq\label{eq:H2}
\N{v}_{H^2(\Omega_R)} \leq \Creg\Big( \|\coeffA^{1/2}\nabla v\|_{L^2(\Omega_{R+1})} + \N{v}_{L^2(\Omega_{R+1})} + \|\widetilde{f}\|_{L^2(\Omega_{R+1})}\Big);
\eeq
since $A\in \CR^{0,1}$ and $\Omega \in \CR^{1,1}$, such a $\Creg$ exists by, e.g., \cite[Theorem 4.18]{Mc:00} and $\Creg$ depends on $A_{\min}$ in \eqref{eq:Amin}, the $\CR^{0,1}$ norm of $A$, and the $\CR^{1,1}$ norm of the parametrisation of $\partial\Omega$. Note that the $R+1$ in the norms on the right-hand side of \eqref{eq:H2} can be replaced by $R+b$ for any $b>0$, but the constant $\Creg$ blows up as $b\tendo$.

\begin{theorem}[Quasioptimality of the Galerkin method]\label{thm:Emain}
Let $\Omega$, $A$, and $\coeffn$ be as in \S\ref{sec:EDPsetup}; i.e.~$\Omega$ a compact set such that its complement $\domain_+$ is connected,
$A$ a symmetric positive-definite matrix-valued function of position such that $\supp(I-A)$ is a compact subset of $\ballR$
and $\coeffn$ a strictly-positive scalar-valued function of position such that $\supp(1-\coeffn)$ is a compact subset of $\ballR$.
Furthermore, let $\Omega$ be $\CR^{\infty}$, and let $A, \coeffn \in \CR^{1,1}(\Omega_R)$ be such that $A$ and $\coeffn$  are $\CR^{\infty}$ in a neighborhood of $\partial \Omega$. Finally assume that
$\Omega$ is nowhere tangent to infinite order to the geodesic flow generated by $A$ and $\coeffn$ via the Hamiltonian \eqref{eq:Hamiltonian}, and that this flow on $\Omega_+$ is nontrapping.

There exists a $k_0>0$ such that if
\begin{align}\label{eq:mesh}
1&\geq 
\hFEM k^2 
\sqrt{1 + (\hFEM k)^2}\, L\big(\coeffn\coeffA^{-1},\Omega,R+2\big)\,
\Cint\Creg (1+ \CTR)\,
\left(\frac{\coeffn_{\max}}{\coeffn_{\min}}\right)^{1/2} \frac{4\sqrt{2}}{\pi} 
\\ &\hspace{5cm}\times
\left( (\coeffn_{\max})^{1/2}+ \frac{1+(\coeffn_{\min})^{1/2}}{k_0} + \frac{1}{k_0^2(\coeffn_{\min})^{1/2}}
\right),\nonumber
\end{align}
then the Galerkin equations \eqref{eq:EDPFEM} have a unique solution which satisfies 
\beq\label{eq:QO}
\N{u-u_\hFEM}_{\HoDkkg} \leq 2\big(1+ \CTR\big)\left(\min_{v_\hFEM\in\cH_\hFEM} \N{u-v_\hFEM}_{\HoDkkg}\right).
\eeq
(Note that $k_0$ depends on $A$, $\coeffn$, and $\Omega$ but---as in Theorem \ref{thm:2}---is uniform on small $\CR^{2,\alpha}$ neighborhoods of $A$ and $\coeffn$; see Section \ref{sec:uniform}.)
\end{theorem}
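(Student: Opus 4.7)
The strategy is the classical Schatz--Nitsche argument, familiar from the constant-coefficient analysis in \cite{MeSa:10}: combine a G\r{a}rding inequality for $a(\cdot,\cdot)$, Galerkin orthogonality, and an Aubin--Nitsche duality argument in which the nontrapping bound \eqref{Sobolevupperbound2} replaces the role played by the constant-coefficient resolvent estimate. The final mesh condition \eqref{eq:mesh} arises when one demands that the $L^2_\coeffn$ error term produced by duality be small enough to absorb into the $\HoDkkg$ error.

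Using that $-\Re\langle T_R\phi,\phi\rangle_{\Gamma_R}\geq 0$ for the outgoing DtN map on a sphere, the form $a$ satisfies the G\r{a}rding inequality
\begin{equation*}
\Re a(v,v) + 2 k^2\|\coeffn^{1/2}v\|^2_{L^2(\Omega_R)}\geq \|v\|^2_{\HoDkkg}, \qquad v\in H^1_{0,D}(\Omega_R),
\end{equation*}
while \eqref{eq:TR2} together with Cauchy--Schwarz yields continuity with constant $1+\CTR$. Combined with Galerkin orthogonality $a(u-u_\hFEM,v_\hFEM)=0$, one obtains for arbitrary $v_\hFEM\in\cH_\hFEM$ the inequality
\begin{equation*}
\|u-u_\hFEM\|^2_{\HoDkkg}\leq (1+\CTR)\|u-u_\hFEM\|_{\HoDkkg}\|u-v_\hFEM\|_{\HoDkkg} + 2k^2\|\coeffn^{1/2}(u-u_\hFEM)\|^2_{L^2(\Omega_R)},
\end{equation*}
so the task reduces to bounding the $L^2_\coeffn$ term in terms of $\|u-u_\hFEM\|_{\HoDkkg}$ times a small multiplier.

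For that, I would solve the adjoint variational problem $a(w,z)=(w,u-u_\hFEM)_{L^2_\coeffn}$ for all $w\in H^1_{0,D}(\Omega_R)$; integration by parts together with the fact that the adjoint of $T_R$ is the incoming DtN map show that $z$, extended by zero to $\Omega_+$, is precisely the outgoing solution of $(P_\Omega(g)-k^2)z=u-u_\hFEM$. Choosing $\chi\in\CR_c^\infty(B(0,R+2))$ with $\chi\equiv 1$ on $B(0,R+1)$ and applying \eqref{Sobolevupperbound2} with $s=1$ gives
\begin{equation*}
\|z\|_{H^1_{k,A,\coeffn}(\Omega_{R+1})}\leq \frac{2\sqrt{2}\,L}{\pi}\|u-u_\hFEM\|_{L^2_\coeffn(\Omega_R)},
\end{equation*}
with $L=L(\coeffn\coeffA^{-1},\Omega,R+2)$. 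Feeding this into the PDE $\nabla\cdot(A\nabla z)=-\coeffn(u-u_\hFEM)-k^2\coeffn z$ and invoking the elliptic regularity bound \eqref{eq:H2} produces an estimate of the form $\|z\|_{H^2(\Omega_R)}\leq C\,k\,\|u-u_\hFEM\|_{L^2_\coeffn}$, where $C$ is the product of $\Creg$, the factor $(\coeffn_{\max}/\coeffn_{\min})^{1/2}$, the resolvent constant $2\sqrt{2}L/\pi$, and lower-order corrections of order $1/k_0$ and $1/k_0^2$. By Galerkin orthogonality, continuity of $a$, and the interpolation estimate \eqref{eq:E2} in the form $\|z-I_\hFEM z\|_{\HoDkkg}\leq \Cint\hFEM\sqrt{1+(\hFEM k)^2}\,\|z\|_{H^2(\Omega_R)}$, one arrives at
\begin{equation*}
\|\coeffn^{1/2}(u-u_\hFEM)\|_{L^2}\leq C'(1+\CTR)\,\hFEM k\sqrt{1+(\hFEM k)^2}\,\|u-u_\hFEM\|_{\HoDkkg}.
\end{equation*}
Substituting this back into the inequality from Step~1 and demanding $2k^2\bigl(C'(1+\CTR)\hFEM k\sqrt{1+(\hFEM k)^2}\bigr)^2\leq \tfrac{1}{2}$ lets the $L^2_\coeffn$ term be absorbed; collecting all the constants reduces this requirement to exactly \eqref{eq:mesh} and yields the quasioptimality constant $2(1+\CTR)$.

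The mechanics of Schatz's argument are by now routine; the real work is in the bookkeeping. The main subtlety is threading the coefficient-dependent constants through the elliptic regularity step---in particular, converting the semiclassically weighted norm in \eqref{eq:weightednorm} into the unweighted $H^2$-norm required by \eqref{eq:E2}---so as to produce precisely the factors $(\coeffn_{\max}/\coeffn_{\min})^{1/2}$, $4\sqrt{2}/\pi$, and the $k_0$-dependent terms appearing in \eqref{eq:mesh}. One also needs to verify carefully that the adjoint variational problem on the truncated domain $\Omega_R$ coincides with the outgoing resolvent on $\Omega_+$ acting on the zero extension, which uses the construction of $T_R$ as the exact DtN map for the \emph{pure} Helmholtz operator outside $\ballR$ (so that coefficient variation does not spoil the matching). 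Finally, the parenthetical uniformity assertion follows by the same perturbation argument as in Section~\ref{sec:uniform}, since each constant above depends only on $\CR^{2,\alpha}$-bounds on $\coeffA$ and $\coeffn$.
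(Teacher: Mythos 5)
Your proposal is correct and follows essentially the same route as the paper: the Schatz duality argument (which the paper packages as Lemma \ref{lem:Schatz}, citing \cite[Theorem 6.32]{Sp:15}), with the adjoint approximability controlled by applying Theorem \ref{thm:2} with $s=1$, then the elliptic regularity bound \eqref{eq:H2}, then the interpolation estimate \eqref{eq:E2}. The one point to tighten is the identification of the adjoint solution with the resolvent: since the adjoint form carries the incoming DtN map, $z$ is the complex conjugate of the \emph{outgoing} solution with conjugated data (this is the content of the paper's Lemma \ref{lem:adjoint}), not the outgoing solution itself, and it is obtained by solving the exterior problem outside $B(0,R)$ rather than by extension by zero; the norm bounds are of course unaffected.
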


\bre[The mesh threshold \eqref{eq:mesh}] If one assumes that
$\hFEM k \leq C$, then the mesh threshold \eqref{eq:mesh} can be
written in the form \eqref{eq:E1}, i.e.~$\hFEM k^2 \leq c$.  The key
point is that if $\coeffA$, $\coeffn$, and $\Omega$ are as in the
statement of the theorem with the $\CR^{0,1}$ norms of $A$ and
  $\coeffn$ and the $\CR^{1,1}$ norm of $\Omega$ all bounded by
$\widetilde{C}$, say, then all the constants in $c$ apart from
$L(\coeffn\coeffA^{-1}, \Omega, R+2)$ (namely, $A_{\max}$,
$\Cint, \Creg, \CTR$) are bounded in terms of $\widetilde{C}$,
$A_{\min}$ and $\coeffn_{\min}$, but
$L(\coeffn\coeffA^{-1}, \Omega, R+2)$ can be arbitrarily
large. Furthermore, $c$ \emph{decreases} as
$L(\coeffn\coeffA^{-1},\Omega,R+2)$ \emph{increases}; i.e., the
condition on the mesh threshold for quasioptimality becomes more
restrictive as the length of the longest ray grows.  \ere 

\bre[Quasioptimality for higher-order finite-element spaces]
We have only considered the lowest-order conforming finite-element space of $H^1$, namely continuous piecewise-linear polynomials. In the case of the constant-coefficient Helmholtz equation, the mesh threshold for quasioptimality (analogous to \eqref{eq:E1}) has been determined by \cite{MeSa:10, MeSa:11, EsMe:12} for spaces of arbitrary order $p$ (possibly depending on $k$), with the threshold then explicit in $k$, $h$, and $p$. These results are obtained by a careful splitting of the solution that does not immediately generalise to the case $A\neq I$. However, in the case of $p$ fixed, the mesh thresholds for quasioptimality from \cite{MeSa:10, MeSa:11, EsMe:12} have recently been obtained in \cite{ChNi:18} using a simpler method (relying on well-known elliptic regularity results such as \eqref{eq:H2}), although the constants are not given explicitly in $p$. In the case when the DtN map is approximated by an impedance boundary condition, the results of \cite{ChNi:18} immediately apply to the case $A\neq I$, and they can in principle be extended to the full scattering problem considered here.
\ere

\bre[Comparison to existing results in the literature] The only
existing results in the literature on quasioptimality (explicit in all
parameters and coefficients) for the Galerkin method applied to the
variable-coefficient Helmholtz equation are in \cite{Ch:16} and
\cite{GrSa:18}. The main similarity between these two works and the
present paper is that they all use the ``Schatz argument" described in
Lemma \ref{lem:Schatz} below (and pioneered for Helmholtz problems in
\cite{Sa:06, BaSa:07}). The main differences between \cite{Ch:16,
  GrSa:18} and the present paper are that (i) both \cite{Ch:16} and
\cite{GrSa:18} consider the Helmholtz equation in a bounded domain
(\cite{Ch:16} in 1-d, \cite{GrSa:18} in 1-, 2-, or 3-d) with an impedance
boundary condition (recall that this is
the simplest-possible approximation to the DtN map operator $T_R$),
and (ii) to get an a priori bound explicit in $k$ and the
coefficients, both \cite{Ch:16} and \cite{GrSa:18} impose conditions
on the coefficients and the domain that are \emph{stronger} than the
analogue of nontrapping for the interior impedance problem (i.e., the
assumption that every ray reaches the boundary in a uniform
time). 
  \ere

\subsection{Proof of Theorem \ref{thm:Emain}}

The heart of the proof is Lemma \ref{lem:Schatz} below. This gives a condition for quasi-optimality to hold in terms of how well the solution of the adjoint problem 
is approximated by the finite-element space, and relies on the fact that $a(\cdot,\cdot)$ satisfies a G\aa rding inequality. This argument essentially goes back to Schatz \cite{Sc:74} (using the Aubin-Nitsche technique; see, e.g., the references in \cite[Remark 26]{Sp:15}) and was extensively used in the analysis of the FEM for Helmholtz problems by \cite{Sa:06, MeSa:10, MeSa:11, EsMe:12}.

Before stating Lemma \ref{lem:Schatz} we need to introduce some notation.
Let $\Ccont = \Ccont(A, \coeffn, R, k_0)$ be the \emph{continuity constant} of the sesquilinear form $a(\cdot,\cdot)$ (defined in \eqref{eq:EDPa}) in the norm $\|\cdot\|_{\HoDkkg}$; i.e.
\beqs
a(u,v)\leq \Ccont \N{u}_{\HoDkkg} \N{v}_{\HoDkkg} \quad\tfa u, v \in H^1_{0,D}(\Omega_R).
\eeqs
By the Cauchy-Schwarz inequality and \eqref{eq:TR2} we have 
\beq\label{eq:Cc}
\Ccont \leq 1 + \CTR.
\eeq

\begin{definition}[The adjoint sesquilinear form $a^*(\cdot,\cdot)$]\label{def:adjoint}
The adjoint sesquilinear form, $a^*(u,v)$, to the sesquilinear form $a(\cdot,\cdot)$ defined in \eqref{eq:EDPa} is given by
\beq\label{eq:EDPadjoint}
a^*(u,v) := \overline{a(v,u)}= \int_{\domain_R} 
\Big((\coeffA \gu)\cdot\gvb
 - k^2 \coeffn u\vb\Big) - \big\langle \gamma u,T_R(\gamma v)\big\rangle_{\Gamma_R}.
\eeq
\end{definition}

\ble
\label{lem:adjoint}
Given $F\in (\HoDk)'$, if $u$ is the solution to the variational problem
\beq\label{eq:adjoint1}
a^*(u,v)= F(v) \quad\tfa v\in H^1_{0,D}(\OR),
\eeq
then $\overline{u}$ satisfies
\beq\label{eq:adjoint2}
a(\overline{u},v)= \overline{F(\overline{v})} \quad\tfa v\in H^1_{0,D}(\OR).
\eeq
\ele

\bpf[Proof of Lemma \ref{lem:adjoint}]
By \eqref{eq:adjoint1},
\beqs
\overline{a^*(u,\overline{v})}= \overline{F(\overline{v})} \quad\tfa v\in H^1_{0,D}(\OR).
\eeqs
The result then follows from the definition of $a^*(\cdot,\cdot)$ and the following property of the DtN map $T_R$:
\beqs
\big\langle T_R\psi, \overline{\phi} \big\rangle_\Gamma = \big\langle T_R \phi, \overline{\psi}\big\rangle_\Gamma \quad\tfa \phi,\psi \in H^{1/2}(\GR).
\eeqs
This property follows from the fact that, if $u$ and $v$ are solutions of the homogeneous Helmholtz equation $\Delta u +k^2 u=0$ in $\Rea^n\setminus \overline{\ballR}$, both satisfying the Sommerfeld radiation condition \eqref{eq:src}, then
\beqs
\int_{\GR} (\gamma u)\, \pdiff{v}{n} = \int_{\GR} (\gamma v)\,\pdiff{u}{n}
\eeqs
by Green's second identity; see, e.g., \cite[Lemma 6.13]{Sp:15}.
\epf

\ble[Conditions for quasi-optimality]\label{lem:Schatz}
Given $f\in L^2(\Omega_R)$, let $S^*f$ be the solution of the adjoint problem \eqref{eq:adjoint1} with $F(v)$ defined in \eqref{eq:EDPa}. Let
\beq\label{eq:etadef}
\eta(\cH_\hFEM):= \sup_{0\neq f\in L^2(\Omega_R)}\min_{v_\hFEM\in\cH_\hFEM} \frac{\N{S^*f-v_\hFEM}_{\HoDkkg}}{\big\|
f\big\|_{L^2(\Omega_R)}}.
\eeq
If 
\beq\label{eq:S1}
\eta(\cH_\hFEM) \leq \frac{1}{2 \Ccont {(\coeffn_{\max})}^{1/2} k}
\eeq
then the Galerkin equations \eqref{eq:EDPFEM} have a unique solution which satisfies
\beq\label{eq:QOS}
\N{u-u_\hFEM}_{\HoDkkg} \leq 2\Ccont\left(\min_{v_\hFEM\in\cH_\hFEM} \N{u-v_\hFEM}_{\HoDkkg}\right).
\eeq
\ele

\bpf
Since
\beq\label{eq:TR}
\Re \big( - \langle T_R \phi,\phi\rangle_{\Gamma_R}\big) \geq 0 \quad\tfa \phi \in H^{1/2}(\Gamma_R) \text{ and for all } k\geq k_0
\eeq
(see \cite[Corollary 3.1]{ChMo:08} or \cite[Theorem 2.6.4]{Ne:01}), $a(\cdot,\cdot)$ satisfies the G\aa rding inequality
\beqs
\Re\big(a(v,v)\big) \geq \N{v}^2_{\HoDkkg} - 2 k^2 \coeffn_{\max}\N{v}^2_{L^2(\Omega_R)}
\eeqs
and the result follows from the account of the Schatz argument in, e.g., \cite[Theorem 6.32]{Sp:15}. 
\epf

The condition \eqref{eq:S1} on $\eta(\cH_\hFEM)$ is implicitly a condition on the meshwidth $\hFEM$. 
To make this condition explicit, we observe that 
the polynomial-approximation bound \eqref{eq:E2} implies that a bound on $\eta(\cH_\hFEM)$ (defined by \eqref{eq:etadef}) can be obtained from  a bound on the $H^2$ norm of the (adjoint of the) exterior Dirichlet problem.

\ble[Bound on $H^2$ norm]\label{lem:H2}
If $\Omega$, $A$, and $\coeffn$ are as in Theorem \ref{thm:Emain}, then 
there exists a $k_0>0$ such that the solution of the exterior Dirichlet problem with $f$, $I-\coeffA$, and $1-\coeffn$ supported in $\Omega_R$ satisfies
\beq\label{eq:H2final}
\N{u}_{H^2(\Omega_R)} \leq k\, \Creg \frac{2\sqrt{2}}{\pi (\coeffn_{\min})^{1/2}}L\big(\coeffn\coeffA^{-1},\Omega,R+2\big) \left( (\coeffn_{\max})^{1/2} + \frac{1+ (\coeffn_{\min})^{1/2}}{k_0} + \frac{1}{k_0^2(\coeffn_{\min})^{1/2}}
\right) \N{f}_{L^2(\Omega_R)}.
\eeq
\ele
\bpf
Choosing $\chi$ such that $\supp \chi \subset B_{R+2}$ but $\chi =1 $ on $B_{R+1}$, Theorem \ref{thm:2} with $s=1$ implies that there exists $k_0>0$ such that
\beq\label{eq:corollary1}
\big\|\coeffA^{1/2}\nabla u\big\|^2_{L^2(\Omega_{R+1})} + k^2 \big\|\coeffn^{1/2} v\big\|^2_{L^2(\Omega_{R+1})} \leq \left(\frac{2\sqrt{2}}{\pi}L\big(\coeffn\coeffA^{-1},\Omega, R+2\big)\right)^2\frac{1}{\coeffn_{\min}} \N{f}^2_{L^2(\Omega_+)} ,
\eeq
for all $k\geq k_0$. Since the $\chi$ in our argument can be chosen independent of $\coeffA$ and $\coeffn$, the constant $k_0$ can be chosen uniformly as $\coeffA$ and $\coeffn$ vary
within a sufficiently small open neighborhood in $\CR^{2,\alpha}$ ($\alpha>0$), just as in Theorem \ref{thm:2}.

We then apply 
\eqref{eq:H2} with $v=u$, $\widetilde{f}= -k^2 \coeffn u-f$, and recall that $\supp f \subset \Omega_R$, to obtain
\beqs
\N{u}_{H^2(\Omega_R)} \leq \Creg\left( \frac{2\sqrt{2}}{\pi (\coeffn_{\min})^{1/2}}L\big(\coeffn\coeffA^{-1},\Omega,R+2\big)\left( k(\coeffn_{\max})^{1/2}+ 1 + \frac{1}{k(\coeffn_{\min})^{1/2}} \right) + 1
\right) \N{f}_{L^2(\Omega_R)}.
\eeqs
The result \eqref{eq:H2final} then follows by noting that $L(\coeffn\coeffA^{-1}, \Omega, R+2)\geq 2$; this last inequality holds since the geodesic needs to at least travel from the boundary of the ball of radius $R+2$ into the ball of radius $R$ and out again (a distance of at least $4$), and in this annular region the metric is Euclidean and the flow has speed 2.
\epf

We can now prove Theorem \ref{thm:Emain}.

\bpf[Proof of Theorem \ref{thm:Emain}]
Using the polynomial approximation result \eqref{eq:E2} and the definitions of $\eta(\cH_\hFEM)$ \eqref{eq:etadef} and $\|\cdot\|_{\HoDkkg}$ \eqref{eq:1knormg}, we have
\beqs
\eta(\cH_\hFEM)\leq \Cint \hFEM 
\sqrt{1 + (\hFEM k)^2}
\sup_{0\neq f\in L^2(\Omega_R)}\frac{\N{S^*f}_{H^2(\Omega_R)}}{\N{f}_{L^2(\Omega_R)}}.
\eeqs
By Lemma \ref{lem:adjoint},  the solution of the adjoint exterior Dirichlet problem with data $f$ is the complex conjugate of the solution of the exterior Dirichlet problem with data $\overline{f}$. Therefore, 
the bound \eqref{eq:H2final} 
for the solution of the exterior Dirichlet problem also holds for the solution of the adjoint problem, and implies that
\begin{align*}
\eta(\cH_\hFEM) &\leq  
\Cint \hFEM 
\sqrt{1 + (\hFEM k)^2}
\,\,k\,\Creg \frac{2\sqrt{2}}{\pi (\coeffn_{\min})^{1/2}} \,L\big(\coeffn\coeffA^{-1},\Omega, R+2\big) 
\\ 
&\hspace{5cm}\times\left( (\coeffn_{\max})^{1/2} + \frac{1+ (\coeffn_{\min})^{1/2}}{k_0}+ \frac{1}{k_0^2(\coeffn_{\min})^{1/2}};
\right)
\end{align*}
the result then follows from using this bound on $\eta(\cH_\hFEM)$ in Lemma \ref{lem:Schatz}. 
\epf

\bre[How the bound \eqref{Sobolevupperbound2} can be used in the analysis of preconditioning strategies]\label{rem:precondition}

\

\noi The Galerkin method \eqref{eq:EDPFEM} is equivalent to a linear system of equations; denote the matrix of this linear system by $\matrixA$. Linear systems involving $\matrixA$ are difficult to solve because (a) the dimension of $\matrixA$ is proportional to $\hFEM^{-n}$ and (by Theorem \ref{thm:Emain}) $\hFEM$ must decrease like $k^{-2}$ for the Galerkin solution to be quasioptimal, therefore $\matrixA$ is large, and (b) since $\matrixA$ is large and sparse (when using standard piecewise-polynomial bases of $\cH_\hFEM$), iterative methods are usually used to solve the linear system, but $\matrixA$ is both  non-normal (in general) and sign-indefinite when $k$ is sufficiently large, and the efficient iterative solution of linear systems involving such matrices is difficult.
One therefore seeks to \emph{precondition} $\matrixA$; i.e., to find a $\matrixB$ such that (i) $\matrixB^{-1}$ approximates $\matrixA^{-1}$ and (ii) the action of $\matrixB^{-1}$ is cheap to compute, and one then applies the iterative solver to $\matrixB^{-1}\matrixA$. 

A very popular and successful preconditioner for $\matrixA$ is based on choosing $\matrixB^{-1}$ to be a cheap approximation of $(\matrixA_\absorb)^{-1}$, where $\matrixA_\absorb$ is the Galerkin matrix arising from the exterior Dirichlet problem with $k^2 \mapsto k^2 + \ri \absorb$, i.e.,~with artificial absorption $\absorb$ added.
The rationale behind this method (introduced in \cite{ErOoVu:06}) is that the larger $\absorb$ is, the less oscillatory the problem is, and hence the easier it is to find a cheap approximation to $(\matrixA_\absorb)^{-1}$. However, the larger $\absorb$ is, the further $(\matrixA_\absorb)^{-1}$ is from $\matrixA^{-1}$, hence the question of what is the optimal $\absorb$ is nontrivial. 

Although one of the advantages of preconditioning with absorption, compared to other Helmholtz-preconditioning strategies, is its easy applicability to variable-coefficient problems (see, e.g., \cite{OoVuMu:10}), the only existing analysis is for constant-coefficient Helmholtz problems (i.e.~\eqref{eq:PDE} with $\coeffA=I$ and $\coeffn=1$).
As a component of determining the optimal $\absorb$, \cite[Theorem 1.4]{GaGrSp:15} proved that, when $\coeffA=I$,  $\coeffn=1$, $\Omega$ is starshaped, and a quasi-optimal error estimate (similar to \eqref{eq:QO}) holds for Galerkin discretizations of the problem with absorption, there exists $C>0$ (independent of $k$ and $\hFEM$, but dependent on $\Omega$ and $\cT_\hFEM$) such that 
\beq\label{eq:GGS}
\| \matrixI - (\matrixA_\absorb)^{-1} \matrixA\|\leq C \absorb/k;
\eeq
i.e.~choosing $\absorb$ to be a sufficiently small multiple of $k$ guarantees that $(\matrixA_\absorb)^{-1}$ is a good approximation to $\matrixA^{-1}$, uniformly as $k\rightarrow \infty$.
Using the bound \eqref{Sobolevupperbound2} in the arguments of \cite{GaGrSp:15} one can show that the bound \eqref{eq:GGS} holds when $\coeffA$, $\coeffn$, and $\Omega$ satisfy the conditions of Theorem \ref{thm:Emain} with $C$ a constant multiple of $L(\coeffn\coeffA^{-1},\Omega,R+1)$; i.e., as the length of the longest ray increases, less absorption is allowed for $(\matrixA_\absorb)^{-1}$ to be a good approximation to $\matrixA^{-1}$.
This result is then consistent with the numerical experiments in \cite[Tables 8 and 9]{GaGrSp:15}: 
the geometry corresponding to Table 9 supports longer rays than the geometry corresponding to Table 8, and the numerical results in the tables (the number of iterations required to solve $(\matrixA_\absorb)^{-1}\matrixA$ with GMRES) show that a lower amount of absorption is allowed in the former case than in the latter for $(\matrixA_\absorb)^{-1}$ to be a good preconditioner for $\matrixA$.
\ere

\bre[How the bound \eqref{Sobolevupperbound2} can be used in ``Uncertainty Quantification"]\label{rem:UQ}
In the last 10 years there has been a surge of interest in ``Uncertainty Quantification (UQ)" of PDEs, understood as theory and algorithms for computing statistics of quantities of interest involving PDEs \emph{either} posed on a random domain \emph{or} having random coefficients. 

There is a large literature on UQ for the Poisson equation 
\beq\label{eq:diffusion}
-\nabla\cdot (A(\omega) \nabla u(\omega))=f(\omega),
\eeq
(where $\omega$ is an element of the underlying probability space) due, in part, to its large number of applications (e.g.~in modelling groundwater flow).
The fact that a priori bounds on the solution of \eqref{eq:diffusion} that are explicit in the coefficient $A$ can easily be obtained is the starting point for the rigorous analysis of UQ algorithms; see e.g.~\cite{BaTeZo:04,BaNoTe:07,Gi:10,MuSt:11,Ch:12, ChScTe:13}. For example, for \eqref{eq:diffusion} posed in a bounded Lipschitz domain $D$ with homogeneous Dirichlet boundary conditions and $A\in L^\infty(D)$ with $A_{\min}>0$ 
(in the sense of quadratic forms as in \eqref{eq:Amin}), the Lax-Milgram theorem implies that
\beq\label{eq:diffusion_bound}
\N{u}_{H^1(D)}\leq \frac{C_D}{A_{\min}}
\N{f}_{L^2(D)},
\eeq
where $C_D$ is the constant appearing in the Poincar\'e inequality $\|v\|_{H^1(D)}\leq C_D^{1/2} \|\nabla v\|_{L^2(D)}$ for all $v\in H^1_0(D)$.
In contrast, there has been essentially no rigorous theory of UQ for the Helmholtz equation with large $k$ because a priori bounds on the solution that are explicit in both $k$ and the coefficients have not been available. 

The recent paper \cite{PeSp:18} presented general measure-theory arguments that convert a bound on the (deterministic) Helmholtz equation that is explicit in both $k$ and the coefficients
into a bound, and associated well-posedness result, on the Helmholtz equation with random coefficients (and random data). 
The paper \cite{PeSp:18} used as input to these general arguments the deterministic bounds from \cite{GrPeSp:18} for star-shaped Lipschitz $\Omega$ and coefficients satisfying radial monotonicity-like conditions which are stronger than nontrapping. 
We highlight that the bound \eqref{Sobolevupperbound2}  can be used with the arguments of \cite{PeSp:18} to obtain well-posedness results and a priori bounds on the Helmholtz equation \eqref{eq:PDE} where the coefficients and domain are such that the problem is almost surely nontrapping.
\ere


\end{document}